\documentclass{amsart}

\usepackage{amssymb, amsmath, esint, xcolor}
\usepackage[backref=page]{hyperref}

\usepackage{mathtools}
\mathtoolsset{showonlyrefs}

\renewcommand{\[}{\begin{equation}\begin{aligned}}
\renewcommand{\]}{\end{aligned} \end{equation}}

\newcommand{\dist}{\overline{\mathrm{dist}}} 

\usepackage{verbatim}

\newtheorem{thm}{Theorem}
\newtheorem{prop}[thm]{Proposition}
\newtheorem{lemma}[thm]{Lemma}

\theoremstyle{remark}
\newtheorem{remark}[thm]{Remark}

\theoremstyle{definition}

\author{G\'abor Sz\'ekelyhidi}
\thanks{Supported in part by NSF grant DMS-2506325.}
\address{Department of Mathematics, Northwestern University, Evanston,
  IL, USA}
\email{gaborsz@northwestern.edu}

\title[Nondegenerate neck pinches]{Nondegenerate neck pinches along
  the mean curvature flow}
\date{}

\begin{document}

\begin{abstract}
  We show that for generic smooth compact initial surfaces the mean curvature flow
  in $\mathbb{R}^3$ has spherical or
  nondegenerate neck pinch singularities at the first singular
  time. In particular the singularities at the first singular time are
  isolated in spacetime. As an application we give a new approach to
  constructing a mean curvature flow with surgery for smooth compact
  initial surfaces in $\mathbb{R}^3$. 
\end{abstract}

\maketitle
\section{Introduction}
Let $S_t\subset\mathbb{R}^3$ be a family of compact surfaces evolving by
the mean curvature flow. For a given initial surface $S_0$ the flow
can encounter singularities, which in principle can be rather
complicated. However, an influential
conjecture of Huisken (see Ilmanen~\cite[Problem 8]{IlmanenProb}) states
that for \emph{generic} initial 
data, the flow should only encounter spherical and cylindrical
singularities. This conjecture was recently resolved by
Chodosh-Choi-Mantoulidis-Schulze~\cite{CCMS24, CCMS24_2}, building on
foundational earlier works by Colding-Minicozzi~\cite{CM12}, and the
solution of the multiplicity one conjecture by
Bamler-Kleiner~\cite{BK23}. An important consequence of this is that
by Hershkovits-White~\cite{HW19} and
Choi-Haslhofer-Hershkovits~\cite{CHH22} the flow with such generic 
initial data can be defined uniquely through the singularities in a
weak sense. 

Spherical singularities are modeled on the shrinking sphere, and they
are very well understood -- a connected component of the flow must
shrink to a point, and since the sphere is compact, near the
singularity in spacetime the flow of this component can be written as
a graphical flow over the sphere, see for instance Huisken~\cite{Huisken84}. In contrast,
cylindrical singularities are modeled on the self shrinking cylinder
$\sqrt{-t}\mathcal{C} = \mathbb{R} \times S^1(\sqrt{-2t})$ as $t\nearrow 0$, which is
noncompact, and this causes significant technical complications. For
instance cylindrical singularities may be non-isolated, as is the case
for the ``marriage ring'' given by a torus that collapses to a
circle. The work of Colding-Minicozzi~\cite{CM15}, and more recently
Sun-Wang-Xue~\cite{SWX25_2}, gives a detailed
analysis of the structure of the singular set near cylindrical
singularities. The upshot of these results is that
the cylindrical singularities are contained in the union of finitely
many $C^{2,\alpha}$ curves in spacetime.

To go further, it was conjectured by Ilmanen~\cite[Problem 9]{IlmanenProb} (see also
Colding-Minicozzi-Pedersen~\cite[Conjecture 7.1]{CMP15},
Sun-Xue~\cite[Conjecture 1.1]{SX22}) that, for
generic initial conditions, the situation is much better than this, and
that in fact the cylindrical singularities are isolated in spacetime. 
Our main result is to verify this conjecture up to the first singular
time. More precisely we show the following.
\begin{thm}\label{thm:main}
  Suppose that $S_0\subset \mathbb{R}^3$ is a smooth compact
  surface. There are arbitrarily small $C^2$ perturbations
  $\tilde{S}_0$ of $S_0$ such that the mean curvature flow
  $\tilde{S}_t$ with initial condition $\tilde{S}_0$ admits
  only spherical and nondegenerate cylindrical singularities at its
  first singular time. 
\end{thm}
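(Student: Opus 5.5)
We first reduce to the case where every singularity at the first singular time $T$ is spherical or cylindrical. By Chodosh--Choi--Mantoulidis--Schulze \cite{CCMS24, CCMS24_2}, combined with the multiplicity one theorem of Bamler--Kleiner \cite{BK23}, there are arbitrarily small $C^2$ perturbations of $S_0$ with this property. Moreover, the set of such initial data is open in a suitable sense, since cylindrical and spherical singularities are stable under small perturbations of the flow by the uniqueness of tangent flows. We may therefore assume $S_0$ has only spherical and cylindrical singularities at $T$. By Colding--Minicozzi \cite{CM15} and Sun--Wang--Xue \cite{SWX25_2}, the cylindrical singular set at time $T$ is then contained in finitely many $C^{2,\alpha}$ curves. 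Near each cylindrical singular point the rescaled flow converges to $\mathcal{C}$, and the graph function $u$ over $\mathcal{C}$ has the asymptotics of the Jacobi operator $L = \Delta - \frac{1}{2}x\cdot\nabla + 1$. The neutral mode is $x_1^2-2$ along the axis. \emph{Nondegenerate} means the coefficient of this neutral mode dominates with the correct (negative) sign, so that the neck pinches like $u \sim -c\,(x_1^2-2)/|\log(-t)|$ with $c>0$. Degenerate points are those where this coefficient vanishes and the flow is governed by faster-decaying modes (as in \cite{SX22}).

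The core of the argument is a perturbation in a finite-dimensional family. We take the flows $S^{a}_t$ with initial data $S_0 + \sum_i a_i\phi_i\nu$, where $\phi_i$ ranges over finitely many bump functions and $a$ lies in a small ball of $\mathbb{R}^N$. The goal is to show that degeneracy is a codimension $\geq 1$ phenomenon, so that it occurs only for $a$ in a measure-zero set. Concretely, for each degenerate point $(x_0,T)$ we want a quantitative statement of the following form. Take the initial perturbation that corresponds, after flowing to near $T$, to a translation in time (or to the positive-definite direction given by the mean curvature $H>0$ near the neck). Then the neutral coefficient strictly changes sign, or the singular point disappears or becomes isolated. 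Following the Colding--Minicozzi/Chodosh et al.\ strategy, I would use the monotone perturbation $S_0 \mapsto$ a small normal graph by a positive multiple of $H$-like flow, i.e.\ essentially perturbations that are strictly one-sided. One-sided perturbations separate the flows (avoidance principle), and a density/sweepout argument shows that only countably many (or measure zero many) members of a one-parameter monotone family can be degenerate. The key estimate is that if the flow $S^s$ is degenerate at some point, then nearby flows $S^{s'}$, which lie strictly inside or outside, have neck behavior controlled by the separation function. The separation satisfies a linearized equation, so by a Harnack-type estimate it is comparable to a positive multiple of the principal eigenfunction (the constant mode, growth rate $1$) in rescaled variables. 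Adding such a component then breaks degeneracy with a definite sign.

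The main obstacle is this last step: showing that in a monotone family, degenerate cylindrical points occur only for a sparse set of parameters. Degenerate necks could form a continuum along a curve (e.g.\ the marriage ring), and one must show that the one-sided perturbation pushes the whole curve into nondegenerate pinches or spherical ones, uniformly along the curve. I would try to establish a dichotomy via the spectral decomposition of $u$ in the rescaled flow. In a degenerate neighborhood the neutral and unstable modes are small compared with $|\log(-t)|^{-1}$. The separation from a nearby flow, being positive and comparable to the constant mode, forces either earlier extinction of the neck or the emergence of a nonzero neutral coefficient of the right sign. Quantifying this, I would use the ``rate of separation versus rate of singular time change'' argument: the singular time function $s\mapsto T(s)$ is strictly monotone, and at degenerate points it would have to satisfy a strictly stronger Lipschitz-type bound. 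Such a bound can hold only on a measure-zero set of $s$ by a Lebesgue differentiation argument. Choosing $s$ outside this set and small gives the desired $\tilde S_0$.
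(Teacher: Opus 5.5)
There is a genuine gap, and it sits at the step you yourself flag as the main obstacle: you perturb in the wrong direction. A one-sided perturbation produces a positive separation, which you compare via Harnack with the principal mode $1$ (growth $e^\tau$). That mode is the infinitesimal time translation; it is the $be^\tau$ term in Lemma~\ref{lem:geomJacobi}. If $L_t$ has a degenerate $\mathcal{C}$-singularity at $(0,0)$, then $L_{t+s}$ has one at $(0,-s)$. So adding a constant-mode component does not ``break degeneracy with a definite sign''; it only shifts the singular time.

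For the same reason nothing produces a strictly stronger Lipschitz bound for $s\mapsto T(s)$ at degenerate points. The separation $|s-s'|e^\tau$ reaches unit size at the parabolic scale $|s-s'|^{1/2}$, exactly as at any singular point. Even at the larger scale $|s-s'|^{1/3}$, where it first dominates the $O(e^{-\tau/2})$ decay of the degenerate flow, it can be absorbed by re-centering in time.

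Your claim that a monotone family has only a null set of degenerate members is in fact false. Take $S_0$ a thin torus of revolution and $\phi\equiv 1$. Every parallel surface $S_0+s\nu$ is again a thin torus of revolution, i.e.\ a marriage ring collapsing onto a circle. Its cylindrical singularities are non-isolated, hence all degenerate, since nondegenerate ones are isolated \cite{SX22}. Note also that in $\mathbb{R}^3$ there is no free neutral coefficient to make nonzero. Nondegenerate necks have the universal profile $\frac{1}{\sqrt{2}\tau}(y^2-2)$, and degeneracy is the other alternative, $\mathbf{d}_{\mathcal{C}}(M_\tau)=O(e^{-\tau/2})$.

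What is missing is a perturbation direction that is not a symmetry. The paper uses the eigenfunction $y$ (eigenvalue $\frac12$), which is $L^2$-orthogonal to the modes $1,z_1,z_2$ coming from space-time translations, so it survives any re-centering. Since $a\chi_{R_0}y$ changes sign, the maximum principle no longer propagates it, and the paper needs several further ingredients:
\begin{itemize}
\item Proposition~\ref{prop:pert1}, to realize $a\chi_{R_0}y$ near the singular time from a perturbation at an earlier time. This is where smoothness before the first singular time enters.
\item The barrier construction of Proposition~\ref{prop:barrier}, to get $|v|\lesssim R_0R(\tau)|a|e^{\tau}$ on the growing balls $B_{R(\tau)}$.
\item The non-concentration and three-annulus estimates (Propositions~\ref{prop:nonconc} and~\ref{prop:3ann}), to keep the growth rate close to $e^{\tau/2}$ until the difference dominates the $e^{-(\frac12-\kappa_1)\tau}$ decay of the degenerate flow.
\end{itemize}
Frequency monotonicity (Proposition~\ref{prop:noC}) then excludes cylindrical singularities of the perturbed flow within distance of order $|a-a'|^{2/3}$ along the $y$-axis, uniformly over small space-time translations. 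This gives the H\"older separation $|\alpha-\alpha'|\ge|a-a'|^{3/4}$ of Proposition~\ref{prop:localperturb}, and it is this separation, not any monotonicity, that makes the set of bad parameters null.
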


The notion of \emph{nondegenerate} cylindrical singularity here is in
the sense studied by Angenent-Vel\'azquez~\cite{AV97}, and also more
recently Sun-Xue~\cite{SX22}. For
the precise definition see Section~\ref{sec:prelim}, however note that by
\cite{SX22}, nondegenerate cylindrical singularities are isolated in
spacetime  and stable under small perturbations. In particular it
follows that if $T_1$ is the first singular time of the perturbation
$\tilde{S}_t$, then there is some
$T_2> T_1$ such that the flow through singularities is smooth for
$t\in (T_1, T_2)$. 

Note that this result is new even in the setting of mean convex flows, in
which case White~\cite{White02, White15} showed that in any dimension
the flow can only
encounter multiplicity one spherical and cylindrical singularities.
Many of the techniques used in the proof of Theorem~\ref{thm:main} also
work for mean convex flows in higher dimensions, however a new
difficulty in that case is the possibility of cylindrical
singularities modeled on $S^{n-k}\times \mathbb{R}^k$ that are
degenerate in only certain 
directions in the $\mathbb{R}^k$ factor. We remark that if we make the
stronger assumption that the flow is 2-convex, then the only cylinder
that appears is $S^{n-1}\times \mathbb{R}$. We will extend the results
to the general mean convex setting in forthcoming work.  We also
expect that the conclusion of the theorem holds for all time, however this requires
a more detailed understanding of the behavior of the flow across
nondegenerate cylindrical singularities (see
Remark~\ref{rem:initialperturb}). 

An application of our result is a new construction of a mean
curvature flow with surgery for surfaces in $\mathbb{R}^3$. Recall
that Huisken-Sinestrari~\cite{HS08}, Brendle-Huisken~\cite{BH15} and
Haslhofer-Kleiner~\cite{HK17} developed a method that allows one to
continue a 2-convex mean curvature flow past cylindrical singularities, by
performing surgeries. A more 
recent approach relying less on detailed a priori estimates was given
by Haslhofer~\cite{Haslhofer25}. These approaches
rely on an understanding of the high curvature, or neck, regions near cylindrical
singularities, and showing that they admit canonical neighborhoods. These
neighborhoods can be
replaced by different geometric models, altering the topology of the
surface by cutting necks and capping off the resulting ends. Then the
flow can be continued. This procedure depends on choosing suitable
parameters to determine when to perform the surgeries in order to
bypass the singularities. Letting the surgery parameters pass to a 
suitable limit, the corresponding flows with surgeries with a given initial condition
$S_0$ converge to the original mean curvature flow $S_t$. 

If we knew that Theorem~\ref{thm:main} applies for all times, not just
the first singular time, then the results of
Sun-Wang-Xue~\cite{SWX25} provide an
alternative approach to mean curvature flow with surgery (see
\cite[Corollary 1.3]{SWX25}). Indeed,
\cite[Theorem 1.1]{SWX25} shows that when the 
flow passes through a nondegenerate cylindrical singularity, then the
flow itself performs the corresponding surgery. Although
Theorem~\ref{thm:main} applies only to the first singular time, we
will use the same methods to show the following.
\begin{thm}\label{thm:surgery}
  Let $S_0\subset \mathbb{R}^3$ be a smooth compact embedded surface,
  and let $S_t\subset \mathbb{R}^3$, for $t\in[0,\infty)$, be a mean
  curvature flow (more precisely a unit-regular, cyclic, integral
  Brakke flow) with initial condition $S_0$, admitting
  only spherical and cylindrical singularities. Then there exists a
  number $N > 0$, and a sequence $\tilde{S}^k_t$ of flows
  with surgeries converging to $S_t$ as $k\to\infty$ in the following
  sense. For each $k$ there are surgery times
  \[ 0 = \tilde{T}^k_0 < \tilde{T}^k_1 < \ldots <  \tilde{T}^k_N, \]
  and mean curvature flows $\tilde{S}^k_{i, t}$ on the
  intervals $[\tilde{T}^k_{i-1}, \tilde{T}^k_i]$ for $i=1,\ldots,
  N$ that are smooth near the endpoints. These satisfy
  \begin{itemize}
    \item The $\tilde{S}^k_{i, t}$ only have spherical and
      nondegenerate cylindrical singularities, and $\tilde{S}^k_{N,
        t}$ becomes extinct before time $\tilde{T}^k_N$. 
    \item At each surgery time $\tilde{T}^k_i$, let us write
      $\tilde{S}^k_{i, \pm}$ for the two one-sided limits. These are
      smooth, and $\tilde{S}^k_{i, +}$ has good graphicality over
      $\tilde{S}^k_{i, -}$ in the following sense: $\tilde{S}^k_{i,
        -}$ has second fundamental form bounded by a constant $A_0$
      (depending on $i, k$), and $\tilde{S}^k_{i, +}$ is
      $(A_0k)^{-1}$-graphical over $\tilde{S}^k_{i, -}$. 
     \item We define $\tilde{S}^k_t$ by concatenating the flows
       $\tilde{S}^k_{i,t}$. As $k\to \infty$, the (discontinuous)
       flows $\tilde{S}^k_t$ converge to
       $S_t$ in the sense of measures on $\mathbb{R}^3\times [0,\infty)$.  
   \end{itemize}
 \end{thm}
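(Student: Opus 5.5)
The plan is to build the surgery flows inductively, restarting at each surgery time with a small perturbation supplied by the method of Theorem~\ref{thm:main}, and to control the total number of restarts. Fix $k$. Start from $\tilde S^k_{1,0}=\tilde S_0$, a $C^2$-small perturbation of $S_0$ (size to be chosen depending on $k$) such that the flow $\tilde S^k_{1,t}$ has only spherical and nondegenerate cylindrical singularities at its first singular time $T_1'$. By Sun--Xue~\cite{SX22} and Sun--Wang--Xue~\cite{SWX25}, nondegenerate neck pinches are isolated, and the flow passes through them by performing a neck-cutting surgery. Hence there is $T_2'>T_1'$ such that the flow is smooth on $(T_1',T_2')$.

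Choose $\tilde T^k_1\in(T_1',T_2')$ at which $\tilde S^k_{1,t}$ is smooth, with second fundamental form bounded by some $A_0$. Then $\tilde S^k_{1,-}:=\tilde S^k_{1,\tilde T^k_1}$. Next apply the perturbation result of Theorem~\ref{thm:main} to this smooth surface. I use it in the stronger form that the good perturbations are dense, or at least exist within any $C^2$-neighborhood. This gives $\tilde S^k_{1,+}$, which is $(A_0k)^{-1}$-graphical over $\tilde S^k_{1,-}$ and whose flow has only spherical and nondegenerate cylindrical singularities at its first singular time. Iterating gives $\tilde S^k_{i,t}$ and the times $\tilde T^k_i$.

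\textbf{Bounding the number of surgeries.} Each surgery time must pass at least one singular time, so the issue is to bound the number of singular times uniformly in $k$. Here I use the hypothesis that $S_t$ has only spherical and cylindrical singularities. By the structure results of Colding--Minicozzi~\cite{CM15} and \cite{SWX25_2}, together with uniqueness of the flow through such singularities~\cite{HW19, CHH22}, the perturbed flows converge to $S_t$ as $k\to\infty$. By upper semicontinuity of Gaussian density and the classification of tangent flows, each singularity of $\tilde S^k$ lies near the singular set of $S_t$. Moreover, every neck pinch or extinction lowers a topological or area-type quantity: it either reduces genus, increases the number of components with a definite area drop, or removes a component. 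A component that is $\epsilon$-close to $S_t$ can only undergo boundedly many such events, bounded in terms of $S_0$ (genus, area, and the entropy bound from $\lambda(S_0)$). This gives a uniform $N$. Any leftover freedom can be padded with trivial surgeries (the identity perturbation) so that the same $N$ works for all $k$. Extinction in finite time follows from compactness of $S_0$.

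\textbf{Convergence.} This step I expect to be the main obstacle. I prove it by induction on $i$ with a stability statement. Flows starting $\delta$-close in $C^2$ to $S_{\tilde T}$, where $S_t$ is smooth at time $\tilde T$, converge as Brakke flows to $S_t$ on any later interval as $\delta\to 0$. This uses Brakke compactness and the uniqueness of the limit flow for initial data with only spherical and cylindrical singularities~\cite{HW19, CHH22}.

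The delicate point is that the times $\tilde T^k_i$ are chosen for the perturbed flow. I must ensure that $S_t$ itself is close to smooth there, or else argue purely in measure. I would choose $\tilde T^k_i$ just after the singular time, in a window where $\tilde S^k_{i,t}$ is smooth and close to the limit flow, and use local regularity (White's theorem) away from the finitely many singular points. Each perturbation changes the measure only by $O(k^{-1})$, so the concatenated flows converge to $S_t$ as $k\to\infty$ in the sense of measures on $\mathbb{R}^3\times[0,\infty)$.
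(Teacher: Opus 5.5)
Your proposal has a genuine gap in the step that bounds the number of surgeries. You only invoke Theorem~\ref{thm:main}, which controls just the \emph{first} singular time of each restarted flow. So each of your segments must end shortly after a singular time, and the number of surgeries is at least the number of singular times of the concatenated flow. You therefore need that number bounded independently of $k$. Even finiteness for fixed $k$ is unclear, since nothing prevents your restart times from accumulating.

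Your counting argument does not supply this bound:
\begin{itemize}
\item A separating neck pinch raises the number of components without lowering the genus. Topology therefore bounds only the non-separating pinches.
\item There is no definite drop of area or entropy at a neck pinch, because it can occur at an arbitrarily small scale. Entropy is a supremum over scales and need not drop at all.
\item $S_t$ may have degenerate cylindrical singularities that are not isolated (the marriage ring collapsing to a circle). Nearby perturbed flows can then have many nondegenerate neck pinches, and nothing in Theorem~\ref{thm:main} controls how many, so there is no reason for a bound uniform in $k$.
\end{itemize}
Your measure-convergence step (``each perturbation changes the measure by $O(k^{-1})$'') also tacitly relies on a bounded number of steps. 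Alternatively it needs geometrically chosen tolerances together with non-fattening of the current flow after each surgery.

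The missing idea is to decouple the surgery times from the singular times. The paper fixes $r_1$, depending only on $S_t$, such that every degenerate cylindrical singularity of any flow sufficiently close to $S_t$ is $\delta_0/2$-cylindrical at all scales $r\le r_1/2$. This follows from a compactness argument plus Colding--Minicozzi. It then sets $T_i=i(r_1/10)^2$ and chooses surgery times $\tilde T_i\in[T_i-r_1^2/40,\,T_i-r_1^2/60]$ at which the current flow is smooth.

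At $\tilde T_i$ the time slice itself is perturbed via Proposition~\ref{prop:localperturb2} with $t_1=t_2=\tilde T_i$. Hence Proposition~\ref{prop:pert1}, and with it smoothness between the perturbation time and the singularity, is not needed. The barrier construction of Proposition~\ref{prop:barrier} controls the perturbed flow even through spherical or nondegenerate singularities occurring in between.

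Any degenerate singularity at a time $t'\in[T_i,T_{i+1}]$ satisfies $(r_1/8)^2<t'-\tilde T_i\le (r_1/4)^2$. It is therefore cylindrical at the right scale, and a covering argument removes all such singularities with one perturbation. So a single surgery handles a whole time window of fixed length, and between surgeries the flow passes through arbitrarily many nondegenerate neck pinches on its own. This gives $N\approx (T_{ext}+1)/(r_1/10)^2$ independent of $k$, with no counting of singularities. Convergence follows by keeping the flow after the $i$-th surgery within $(1-2^{-i})\kappa$ of $S_t$, using that flows with only spherical and cylindrical singularities are non-fattening.
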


 This result should be compared to that of
 Daniels-Holgate~\cite{DH22}, who showed that mean curvature flows
 with surgery can be used to approximate flows with only spherical and
 neck pinch singularities.
 We emphasize, however, that our notion of a flow with surgery is different from
 the notions studied in \cite{HS08, BH15, HK17, DH22}. In these works the
 surgeries are performed near the singular times, and so the resulting flows
 ``jump over'' the singularities in a discontinuous way, while 
 still controlling the change in topology.  In contrast, the
 surgeries in the flows constructed by Theorem~\ref{thm:surgery} do
 not change the topology, but rather the smooth surface is perturbed
 slightly to a different smooth surface at the surgery times. The
 topological changes along the flow are performed by the flow itself, by
 passing through spherical and nondegenerate cylindrical
 singularities  in accordance with the work of
 Sun-Wang-Xue~\cite{SWX25}. We expect that these smooth perturbations
 are not actually required, but are not able to prove this at
 present.

Let us give a brief overview of the argument for perturbing away
degenerate singularities, that lies at the heart of
Theorems~\ref{thm:main} and \ref{thm:surgery}.
First, as mentioned above,
by \cite{CCMS24_2} we can assume that after a small perturbation our flow $S_t$ 
admits only spherical and cylindrical singularities, and our goal is
to perturb away the degenerate cylindrical singularities. Suppose 
that the flow $S_t$ has a degenerate cylindrical singularity at a point
$(X,T)$, and let $M_\tau$ denote the rescaled mean curvature flow
centered at $(X,T)$. Up to time translation (and rotation if
necessary) we can assume that on
large balls of radius $R(\tau) \to \infty$, the $M_\tau$ are graphs of
$v(\tau)$ over $\mathcal{C}$, and $\Vert v(\tau)\Vert_{L^2} \leq \eta_0
e^{-\tau/2}$ for a small $\eta_0 > 0$. In practice we will get slightly worse decay, but for
this discussion we ignore that. The main task is to show that the initial
condition $M_0$ has arbitrarily small perturbations so that the
corresponding unrescaled flows have no degenerate singularity in a parabolic ball
around $(0,0)$ of a definite size. In the setting of
Theorem~\ref{thm:main} one then needs to show that arbitrarily
small initial perturbations of the flow $S_t$ also exist with no
degenerate cylindrical singularities in a neighborhood of $(X,T)$. 
This last step is why the statement of Theorem~\ref{thm:main} is
restricted to the first singular time. 

To construct perturbations of the flow $M_\tau$, we consider a small parameter
$a$, and define $M^a_\tau$ 
to be the rescaled mean curvature flow whose initial condition is the
graph of $a \chi_{R(0)} y$, where $\chi_{R(0)}$ is a cutoff function
supported in the $R(0)$-ball, and $y$ is the coordinate along the
$\mathbb{R}$-factor of the cylinder $\mathcal{C}$. In practice we will
have a small perturbation of this. Let us also write
$L^a_t$ for the corresponding unrescaled flows, where $L^0_t$ is a
suitable rescaling of $S_t$, with a singularity at $(0,0)$. Naively we hope
that on the balls $B_{R(\tau)}$ the flow $M^a_\tau$ remains graphical
over $M_\tau$ at least for some time $\tau < T_a$, given by the graphs of
$u_a(\tau)$.  Moreover, one expects that the growth of $u_a(\tau)$ is
roughly bounded below by the solution $a y e^{\tau/2}$ of the
linearized rescaled mean curvature flow equation on $\mathcal{C}$.

In order to ensure that the perturbed flow $M^a_\tau$ does not have a
degenerate cylindrical singularity at infinity, we look at the time $\tau_0$ at
which $u_a(\tau)$ dominates $v(\tau)$, i.e. when we have $|a| e^{\tau_0/2} =
e^{-\tau_0/2}$, so $e^{\tau_0} = |a|^{-1}$. At this time we will find
that $M^a_\tau$ has a rate of growth not just relative to $M_\tau$,
but also relative to 
$\mathcal{C}$. It follows from the discrete frequency monotonicity
results of  \cite{SWX25} that this type of growth means that
$M^a_\tau$ cannot converge to the cylinder as $\tau\to \infty$. 
Crucially, if we translate $M^a_{\tau_0}$ in the $y$-direction by a
small amount $r_0 > 0$, the corresponding rescaled flow
still cannot converge to the cylinder.

Translating the rescaled flow by distance $r_0$ at time $\tau_0$ corresponds to a
translation by distance $r_0 e^{-\tau_0/2} = r_0 |a|^{1/2}$ at time
$0$. We will make this discussion 
precise below, and also incorporate rotations and translations in time
as well as translations transverse to the $y$-axis. The main conclusion will be
that the unrescaled flows $L^a_t$ have no cylindrical singularity with $|y| <
r_0|a|^{1/2}$ (in practice we will use $|a|^{2/3}$ instead). 
Unfortunately in itself this type of result is not what we want yet, since as $|a|\to 0$,
the set where $|y| < r_0 |a|^{1/2}$ is also shrinking.

The next observation is that if $L^a_t$ has a degenerate cylindrical
singularity with $y$-coordinate $|y_0| < r_0$, then the same type of argument implies
that $L^{a'}_t$ cannot have any cylindrical singularities with
$y$-coordinate $y_0'$ satisfying $|y_0 - y_0'| < |a-a'|^{1/2}$.
The upshot is that if $L^{a}_t$ and $L^{a'}_t$ have
degenerate cylindrical singularities with $y$-coordinates $y_0$
and $y_0'$ respectively, then $|y_0 - y_0'| \geq |a - a'|^{1/2}$. A simple
covering argument then implies that for a dense set
of parameters $a$ the perturbed flows $L^a_t$ have no degenerate cylindrical
singularity in a fixed parabolic neighborhood of $(0,0)$.

The main technical difficulty in executing this strategy is that we
need control of the perturbed flow $M^a_\tau$ for a sufficiently long
time. We will have to carefully choose the sizes $R(\tau)$ of the balls
that we are working on, and we will need to prove a three-annulus type result
for the graphicality function $u^a_\tau$ of $M^a_\tau$ over
$M_\tau$, in order to show that a rate of growth like $e^{\tau/2}$ at
$\tau=0$ persists for a sufficiently long time.
Since the behavior of the flow cannot be completely localized to the
$R(\tau)$-balls, a crucial ingredient is a global barrier argument, leading
to an estimate of the form $|u^a_\tau| \lesssim  a e^\tau$ on the $R(\tau)$-balls. 
Given this, the three-annulus lemma can be shown
similarly to the arguments in \cite{LSSz22} and Ghosh~\cite{Ghosh25}, by
proving a non-concentration estimate at infinity.

\section{Preliminary results}\label{sec:prelim}
In this section we will recall and prove some basic results which will
be used later. Throughout the paper we consider the
mean curvature flow of embedded closed hypersurfaces $S_t\subset
\mathbb{R}^3$. The flow can be defined for all time in a weak sense,
through singularities, as a unit regular, cyclic integral Brakke flow (see
Ilmanen~\cite{Ilmanen94}, White~\cite{White09}). 
By the results of Chodosh-Choi-Mantoulidis-Schulze~\cite{CCMS24_2} we
can assume that we have already 
slightly perturbed the initial surface $S_0$ in such a way that $S_t$
only has spherical and cylindrical singularities. Using
Hershkovits-White~\cite{HW19} and
Choi-Haslhofer-Hershkovits~\cite{CHH22}, this implies that the flow is
non-fattening. This further implies that if we consider a sequence of
initial conditions $S^i_0$ converging to $S_0$, then the corresponding
Brakke flows $S^i_t$ converge to $S_t$, and using
Bernstein-Wang~\cite[Corollary 1.2]{BW17}, for large $i$ these flows
also only have spherical and cylindrical singularities. 

In the arguments
below, various constants will depend on a bound for the area ratios of
$S_t$, or equivalently, on a bound on the entropy of $S_0$, which
implies similar bounds for any rescaling of any time slice of
$S_t$. We will therefore assume that uniform area ratio bounds hold for any
(rescaled) mean curvature flow considered below, and we will
not explicitly state the dependence of other constants on these
bounds. 

Let us write $\mathcal{C} = \mathbb{R} \times S^1(\sqrt{2}) \subset \mathbb{R}^3$
for the cylinder, so that $\sqrt{-t}\mathcal{C}$ is a solution of the
mean curvature flow for $t < 0$, with a cylindrical singularity at
$(0,0)$. We use coordinates $(y, z_1,z_2)$ on $\mathbb{R}^3$,
so that the circle $S^1(\sqrt{2})$ lies in the $z_1z_2$-plane. We
write $x = (y, z_1, z_2)$ for the position vector in
$\mathbb{R}^3$. We will consider surfaces $M$ that are small
perturbations of $\mathcal{C}$ on a ball $B_R:= \{x\,;\, |x| < R\}$. For this, we say
that $M$ is $\delta$-graphical over $\mathcal{C}$ on a ball $B_R$, if
the following hold: $B_R\cap M$ is relatively closed, and can
be parametrized as the normal graph over a region $\Omega \subset
\mathcal{C}$ with a function 
$v$ satisfying $|v| + |\nabla v| + |\nabla^2 v| < \delta$. We
similarly define when $M'$ is $\delta$-graphical over $M$ on a ball
$B_R$. In defining these graphs over surfaces close to a cylinder, we
use the outward pointing unit normals. When $M'$ is the normal graph
of a function $v$ over $M$, then we will identify functions on $M'$
with functions on $M$, using the identification $x\mapsto x +
v(x)\mathbf{n}$ for $x\in M$. 

Given a mean curvature flow $L_t$, following Huisken~\cite{Hui90} we define
the corresponding rescaled mean curvature flow $M_\tau$
centered at $(0,0)$ by
\[ M_\tau = e^{\tau/2} L_{-e^{-\tau}}. \]
More generally, the rescaled flow centered at $(X,T)$ is obtained by
first replacing $L_t$ by $L_{t+T}-X$. We say that the flow $L_t$ 
has a $\mathcal{C}$-singularity at $(X,T)$, if the rescaled flow
$M_\tau$ centered at $(X,T)$ 
converges to $\mathcal{C}$ smoothly on compact sets as
$\tau\to\infty$. It was shown by Colding-Minicozzi~\cite{CM15_1} that it is
enough for this to require that $M_{\tau_i} \to \mathcal{C}$ on compact
sets along a sequence $\tau_i\to \infty$, and we will also say in this
case that $M_\tau$ has a $\mathcal{C}$-singularity at infinity.
More generally, we say that the flow has a cylindrical singularity at $(X,T)$, if the
corresponding rescaled flow converges to $Q\mathcal{C}$ for a rotation
$Q$. 

We next recall the notion of a nondegenerate cylindrical singularity
from Sun-Xue~\cite{SX22}, and Sun-Wang-Xue~\cite{SWX25_2}.
In \cite[Theorem 1.4]{SWX25_2} it was shown
that if the rescaled flow $M_\tau$ has a $\mathcal{C}$-singularity at
infinity, then one of the following happens:
\begin{itemize}
  \item[(a)] For some $K > 0$, and sufficiently large $\tau$, the $M_\tau$ are graphical over
    $\mathcal{C}$ on $B_{K\sqrt{\tau}}$, and the graphicality function
    $v(x,\tau)$ satisfies
    \[ \label{eq:nondegasympt} \left\Vert v(\cdot, \tau) - \frac{1}{\sqrt{2}\tau} (y^2 -
        2)\right\Vert_{L^2} = o(\tau^{-1}), \]
  \item[(b)] For some $K > 0$ and sufficiently large $\tau$, the
    $M_\tau$ are graphical over $\mathcal{C}$ on $B_{e^{K\tau}}$, and
    the graphicality function $v$ satisfies
    \[ \Vert v(\cdot, \tau) \Vert_{L^2} = O(e^{-\tau/2}). \]
  \end{itemize}
Here, as well as throughout the paper, $L^2$-norms are always taken
with respect to the Gaussian measure $e^{-|x|^2/4}$ as in
\eqref{eq:dCdefn} below. 
Note that the result in \cite{SWX25_2} holds in higher dimensions, and
is more refined. In our setting we combined their cases (ii) and
(iii), and the ``low spherical flow'' that appears is necessarily
trivial. We say that the $\mathcal{C}$-singularity is \emph{nondegenerate}, if
the case (a) holds. Otherwise we say that the
$\mathcal{C}$-singularity is degenerate. 

It will be useful for us to give an alternative characterization,
essentially implicit in \cite{SWX25_2}. For this, let us
define the following $L^2$-distance: 
\[\label{eq:dCdefn} \mathbf{d}_{\mathcal{C}}(M)^2 = \Vert \dist\Vert^2_{L^2(M)}
  = \int_M \dist(x)^2 e^{-|x|^2 / 4}\,
  d\mathcal{H}^2, \]
where $\dist(x) = \min\{\mathrm{dist}(x, \mathcal{C}), 1\}$ is a truncated
distance function from the cylinder. The following nonconcentration
estimate was shown in \cite{SWX25}.

\begin{prop}[See Corollary 3.3 in \cite{SWX25}] \label{prop:SWXnonconc}
  There is a constant $C$ with the following
  property. If $M_\tau$ is a rescaled mean curvature flow then for $\tau\in [0,2]$ we have
  \[ \int_{M_\tau}\overline{\mathrm{dist}}(x)^2 ( 1+ \tau |x|^2)\, e^{-|x|^2/4}\,
    d\mathcal{H}^2 \leq C \int_{M_0} \overline{\mathrm{dist}}(x)^2\,
    e^{-|x|^2/4}\, d\mathcal{H}^2. \]
\end{prop}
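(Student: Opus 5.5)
We need to prove the nonconcentration estimate for rescaled MCF: for $\tau\in[0,2]$,
\[ \int_{M_\tau}\dist^2(1+\tau|x|^2)e^{-|x|^2/4} \le C\int_{M_0}\dist^2 e^{-|x|^2/4}. \]

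The plan is to derive an evolution inequality for the weighted quantity
\[ I(\tau) = \int_{M_\tau} \dist(x)^2\,\big(1+\tau|x|^2\big)\,e^{-|x|^2/4}\,d\mathcal{H}^2 \]
and integrate it over $[0,2]$. I will work first for smooth flows and then pass to Brakke flows by the usual approximation, using the Brakke inequality with test functions in place of exact differentiation. The basic identity is that for a smooth function $f(x,\tau)$ on a rescaled flow,
\[ \frac{d}{d\tau}\int_{M_\tau} f\,e^{-|x|^2/4} = \int_{M_\tau}\Big(\partial_\tau f + \mathcal{L}f - \big|\mathbf{H}-\tfrac{x^\perp}{2}\big|^2 f\Big)e^{-|x|^2/4}, \]
where $\mathcal{L} = \Delta_{M} - \tfrac12 x\cdot\nabla$ is the drift Laplacian. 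The last term has a good sign for $f\ge0$, and for Brakke flows it is exactly the inequality we may use.

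I will take $f = \phi(\dist^2)\,(1+\tau|x|^2)$, with $\phi$ a smoothing of the truncation. I will use the squared distance to $\mathcal{C}$ directly near $\mathcal{C}$, where it is smooth within distance $<\sqrt2$ of $\mathcal{C}$, and cut it off at level $1$ so that $f$ is smooth with bounded ambient derivatives. The key computation is of the drift Laplacian of $w=|x|^2$ along $M$:
\[ \mathcal{L}|x|^2 = 4 - |x^T|^2 - 2x\cdot\mathbf{H} \quad\text{(up to sign conventions)}. \]
This contributes a term of the form $-\tau|x|^2\dist^2$, which is the gain, together with terms involving $\mathbf{H}$. The $\mathbf{H}$-terms will be absorbed into the good term $-|\mathbf{H}-x^\perp/2|^2 f$ via Cauchy--Schwarz. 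Similarly, $\mathcal{L}\dist^2$ is bounded by $C(1+|x|)\dist + 2|\nabla^M \dist|^2 + C\dist|\mathbf{H}-x^\perp/2|$. The point of using the cylinder is that $\mathcal{C}$ itself is a shrinker, so $x^\perp/2-\mathbf{H}$ vanishes on $\mathcal{C}$, and the drift of the distance function is controlled linearly in $\dist$. Concretely, if the ambient function $d$ is the distance to $\mathcal{C}$, then $\partial_\tau d$ along the flow is $\nabla d\cdot(\mathbf{H}-x^\perp/2)$. Combining these with the Hessian bounds of $d$ near $\mathcal{C}$ gives a differential inequality of the form
\[ I'(\tau) \le C\,I(\tau) + \int_{M_\tau}\big(|x|^2 - c\,\tau|x|^4\big)\dist^2 e^{-|x|^2/4} + (\text{absorbed}). \]
Here the $\partial_\tau$ of the weight gives $|x|^2\dist^2$. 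The gain from $\tau\mathcal{L}|x|^2$ and from the Gaussian factor for large $|x|$ must beat this.

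The main obstacle is precisely the term $\partial_\tau(1+\tau|x|^2)=|x|^2$: it is a loss of one power of $|x|^2$ that must be compensated. My first attempt is a two-step argument. First, prove the unweighted bound $\int_{M_\tau}\dist^2e^{-|x|^2/4}\le C\int_{M_0}\dist^2 e^{-|x|^2/4}$ by Gronwall. Second, obtain the $|x|^2$-gain by integrating in time: the good term $-|\mathbf{H}-x^\perp/2|^2 f$, together with $|x^T|^2$ from $\mathcal{L}|x|^2$, yields a space-time bound
\[ \int_0^2\!\!\int_{M_\tau}|x|^2\dist^2 e^{-|x|^2/4} \le C I(0). \]
This is a standard Gaussian "$|x|^2$ smoothing" bound, analogous to $\int_0^\tau\int |\nabla u|^2 + |x|^2u^2$ estimates for the Ornstein--Uhlenbeck operator. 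With it, the extra term integrated over $[0,2]$ is controlled, and the resulting estimate on $I(\tau)$ holds for $\tau\in[0,2]$.

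If the sign bookkeeping fails, the fallback is to localize. For points with $|x|$ large where $\dist\equiv 1$, use the pseudolocality-free fact that the Gaussian area $\int_{M_\tau\cap\{|x|>R\}}e^{-|x|^2/4}$ can only grow by a factor controlled via the monotonicity of the localized Gaussian density (Huisken's monotonicity centered at points with $|x_0|\sim R$). This bounds the contribution of regions far from $\mathcal{C}$, in terms of $M_0$, with the factor $|x|^2$ absorbed by the shift of the Gaussian centre over a time interval of length comparable to $\tau$.
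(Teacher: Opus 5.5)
The paper does not prove this statement; it quotes it from \cite[Corollary 3.3]{SWX25}. So your argument has to be judged on its own. Its architecture is sound, but the pointwise inequality that carries the whole proof is missing, and the bounds you actually wrote down do not close. The architecture is: a Gronwall bound for $\int_{M_\tau}\dist^2\,d\mu$ that also gives space-time control of a good gradient term and of $|E|^2\dist^2$, where $E=\vec H+\tfrac12x^\perp$ is the velocity of the rescaled flow ($\vec H=\Delta_Mx$); then the Gaussian Poincar\'e-type inequality $\int_M|x|^2f^2\,d\mu\le C\int_M(f^2+|\nabla^Mf|^2+|E|^2f^2)\,d\mu$, obtained by integrating $f^2\mathcal{L}|x|^2=f^2(4+2\langle x,\vec H\rangle-|x^T|^2)$ by parts and writing $2\langle x,\vec H\rangle=2\langle x^\perp,E\rangle-|x^\perp|^2$; and then Gronwall for the weighted quantity.

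The problem is the pointwise input. Your bound $\mathcal{L}\dist^2\le C(1+|x|)\dist+2|\nabla^M\dist|^2+C\dist|E|$ contains terms linear in $\dist$. These can never be controlled by $\int\dist^2\,d\mu$, since the estimate is homogeneous of degree two. Its gradient term also has the bad sign, so nothing is available to feed the Poincar\'e step. This comes from writing the first variation with $\partial_\tau f+\mathcal{L}f$. That agrees with $(\partial_\tau-\mathcal{L})f$ only after integration, so no pointwise gain can be read off from it. With correct bookkeeping, $(\partial_\tau-\mathcal{L})|x|^2=|x|^2-4$, so the weight $1+\tau|x|^2$ produces a loss $(1+\tau)|x|^2\dist^2$. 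The gains $-\tau|x|^2\dist^2$ and $-c\tau|x|^4\dist^2$ you describe do not exist, though the loss is harmless once the space-time bound holds. The fallback via shifted monotonicity is too vague to assess and is not needed.

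What is missing is the following subsolution inequality. Let $s=|z|-\sqrt2$ be the signed distance to $\mathcal{C}$, let $\mathcal{L}=\Delta_M-\tfrac12x\cdot\nabla^M$, and let $\partial_\tau$ denote the derivative along the normal motion. Then
\begin{equation*}
(\partial_\tau-\mathcal{L})\,s^2\le C\,s^2-|\nabla^Ms|^2 \quad \text{on } \{|s|\le 1\}.
\end{equation*}
For any ambient function $g$ the mean curvature terms cancel exactly: $(\partial_\tau-\mathcal{L})g=\tfrac12x\cdot\nabla g-\mathrm{tr}_{T_xM}\nabla^2g$. For $g=s$ this gives $(\partial_\tau-\mathcal{L})s=\frac{(|z|+\sqrt2)\,s}{2|z|}+\frac{\langle e_\theta,\mathbf{n}\rangle^2}{|z|}$. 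The zeroth-order term vanishes on $\mathcal{C}$ because $\mathcal{C}$ is a shrinker. The tilt term, however, is not $O(s)$, contrary to your claim that the drift is controlled linearly in $\dist$. It is only bounded by $|\nabla^Ms|^2$, since $\langle e_\theta,\mathbf{n}\rangle^2\le1-\langle\hat z,\mathbf{n}\rangle^2$. In $(\partial_\tau-\mathcal{L})s^2=2s(\partial_\tau-\mathcal{L})s-2|\nabla^Ms|^2$ its coefficient $2s_+/|z|<1$ has to be beaten by the $-2|\nabla^Ms|^2$, and it is.

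The truncation also needs an argument. $\min\{|s|,1\}^2$ has a concave kink and is not a subsolution across $|s|=1$. Use $\phi(s^2)$ instead, with $\phi$ concave, $\phi(r)=r$ near $0$ and constant for $r\ge1$. The extra term $4|\phi''|s^2|\nabla^Ms|^2$ is at most $C\phi(s^2)$, because $|\nabla^Ms|\le1$ and $\phi''$ is supported where $\phi\ge c>0$. With this:
\begin{itemize}
\item Step 1 controls $\int_0^2\!\int(\phi'|\nabla^Ms|^2+|E|^2\phi)$.
\item Concavity gives $|\nabla^M\phi^{1/2}|^2\le\phi'|\nabla^Ms|^2$, which is what the Poincar\'e step needs.
\item In Step 3 the cross term $2\nabla^M\phi\cdot\nabla^M(\tau|x|^2)$ is absorbed by $-\tau|x|^2\phi'|\nabla^Ms|^2$ plus $C\phi$.
\end{itemize}
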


The following is essentially \cite[Remark 3.8]{SWX25}. 
\begin{prop}\label{prop:3ann2}\label{prop:L23ann} 
  Let $\lambda \not\in \frac{1}{2}\mathbb{Z}$.
  There exist $L_0, \delta > 0$, depending on $\lambda$, such
  that if $L \geq L_0$ and  $M_\tau$ is a rescaled flow that is $\delta$-graphical over
  $\mathcal{C}$ on $B_{\delta^{-1}}$ for $\tau\in [0,2L]$, then
  \[ \mathbf{d}_{\mathcal{C}}(M_L) \geq e^{\lambda L}
    \mathbf{d}_{\mathcal{C}}(M_0), \]
  implies
  \[ \mathbf{d}_{\mathcal{C}}(M_{2L}) \geq e^{\lambda L}
    \mathbf{d}_{\mathcal{C}}(M_L).  \]
\end{prop}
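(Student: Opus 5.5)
We argue by contradiction and compactness, reducing to a three-annulus statement for the linearized operator on $\mathcal{C}$. The linearization of the rescaled flow at $\mathcal{C}$ is
\[ Lu = \Delta u - \tfrac12 \langle x, \nabla u\rangle + u, \]
which is self-adjoint on $L^2(e^{-|x|^2/4})$. Its eigenfunctions are products of Hermite polynomials in $y$ and Fourier modes in the angle, and the spectrum (the growth rates of $e^{\tau L}$) lies in $\frac12\mathbb{Z}$. For a solution $u=\sum_k c_k e^{\mu_k\tau}\phi_k$ the linear three-annulus property holds for any $\lambda\notin\frac12\mathbb{Z}$. Split $u$ into modes with $\mu_k>\lambda$ and modes with $\mu_k<\lambda$, and let $\epsilon=\mathrm{dist}(\lambda,\frac12\mathbb{Z})>0$. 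Then
\[ \|u(L)\|\ge e^{\lambda L}\|u(0)\| \]
forces the high modes to dominate at time $L$, with ratio of low to high part at most $Ce^{-\epsilon L}$. Propagating one further interval of length $L$ then gives growth of at least $e^{(\lambda+\epsilon)L}(1-Ce^{-\epsilon L})\ge e^{\lambda L}$ once $L\ge L_0$. In fact we get strict growth with room $e^{\epsilon L/2}$, and this slack is what survives the limiting argument.

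For the nonlinear statement, fix $L\ge L_0$ and suppose there are $\delta_i\to 0$ and flows $M^i_\tau$, $\delta_i$-graphical over $\mathcal{C}$ on $B_{\delta_i^{-1}}$ for $\tau\in[0,2L]$, satisfying the hypothesis but violating the conclusion. Write $v_i$ for the graph functions and set $\epsilon_i=\mathbf{d}_{\mathcal{C}}(M^i_0)$.

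1. \emph{Control of $\mathbf{d}_{\mathcal{C}}$ along the flow.} By Proposition~\ref{prop:SWXnonconc} applied iteratively on unit time intervals, $\mathbf{d}_{\mathcal{C}}(M^i_\tau)\le C(L)\epsilon_i$ for $\tau\le 2L$. The weighted bound $\int \dist^2|x|^2e^{-|x|^2/4}\lesssim C(L)\epsilon_i^2$ for $\tau\ge$ any positive time gives nonconcentration at infinity. Since $\dist\le 1$ and the part of $M^i_\tau$ outside $B_{\delta_i^{-1}}$ has exponentially small Gaussian weight, $\mathbf{d}_{\mathcal{C}}(M^i_\tau)$ agrees with $\|v_i(\tau)\|_{L^2(B_{\delta_i^{-1}})}$ up to errors that are $o(\epsilon_i)$. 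The only exception would be if $\epsilon_i$ itself were comparable to $e^{-\delta_i^{-2}/8}$; we rule this out, or absorb it, using the nonconcentration estimate together with the graphicality on the huge ball.

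2. \emph{Normalize and pass to the limit.} Set $u_i=v_i/\epsilon_i$ on $B_{\delta_i^{-1}}\times[\tau_0,2L]$. The graph equation is $\partial_\tau v=Lv+Q(v)$ with $|Q|\lesssim \delta_i(|v|+|\nabla v|+|\nabla^2v|)$. Interior parabolic estimates, using the $L^2$ bound from Step~1, give local smooth convergence of $u_i$ to a solution $u$ of $\partial_\tau u=Lu$ on $\mathcal{C}\times(0,2L]$. Nonconcentration upgrades this to strong $L^2$ convergence for $\tau>0$.

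3. \emph{Handling time $0$.} At $\tau=0$ we only have weak convergence. I would prove the slightly stronger statement with $\tau=0$ replaced by $\tau=\tau_0$ small, or simply note that weak lower semicontinuity of the norm gives $\|u(0)\|\le 1$. Here $u(0)$ denotes the $L^2$ weak limit, and $u$ is continuous from it by the semigroup (strong convergence on $(0,2L]$ combined with the linear flow from the weak limit). The hypothesis then passes to the limit as
\[ \|u(L)\|\ge e^{\lambda L}\ge e^{\lambda L}\|u(0)\|, \]
so $u\neq 0$. The failed conclusion passes to the limit as $\|u(2L)\|\le e^{\lambda L}\|u(L)\|$. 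The linear property with its slack $e^{\epsilon L/2}$ contradicts this.

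The main obstacle is Step~1/3: making sure no $L^2$ mass escapes to infinity or concentrates near $\tau=0$, so that the normalized limits are nontrivial and the inequalities pass to the limit. This is exactly what Proposition~\ref{prop:SWXnonconc} supplies for positive times, and at $\tau=0$ lower semicontinuity in the right direction suffices.
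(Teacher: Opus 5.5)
Your overall scheme matches the paper's: argue by contradiction, normalize by $\epsilon_i=\mathbf{d}_{\mathcal{C}}(M^i_0)$, take a local smooth limit on $(0,2L]$, use Proposition~\ref{prop:SWXnonconc} for strong $L^2$ convergence at positive times, and close with a linear three-annulus argument from the spectral gap around $\lambda$. However, Step~3 has a genuine gap.

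\textbf{The gap at $\tau=0$.} Weak lower semicontinuity gives $\Vert w\Vert\le 1$ for the weak limit $w$ of $v_i(0)/\epsilon_i$. Nothing you wrote connects $w$ to the limit solution $u$ on $(0,2L]$. The claim $u(\tau)=e^{\tau\mathcal{L}_{\mathcal{C}}}w$ is an initial-trace statement for the nonlinear problem, and that is exactly where the argument is delicate:
\begin{itemize}
\item at $\tau=0$ the normalized functions are only bounded in $L^2$;
\item the nonlinear term $Q(v_i)/\epsilon_i$ contains $\nabla^2 v_i/\epsilon_i$, which is not uniformly controlled as $\tau\to 0$;
\item $M^i_0$ need not be graphical outside $B_{\delta_i^{-1}}$.
\end{itemize}
So a priori $\lim_{\tau\to 0}\Vert u(\tau)\Vert$ could exceed $\Vert w\Vert$. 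Proposition~\ref{prop:SWXnonconc} only bounds it by a fixed constant $C$, which does not tend to $1$. Hence ``lower semicontinuity in the right direction'' does not give $\Vert u(L)\Vert\ge e^{\lambda L}\lim_{\tau\to0}\Vert u(\tau)\Vert$. Proving the identification would require a weak formulation tested against $\chi_R\phi_k$, integration by parts of the $\nabla^2 v_i$ terms using the quasilinear structure, local energy estimates down to $\tau=0$, and global $L^2$ bounds to remove the cutoff. That is doable, but you have not supplied it. Your alternative of moving the base time to $\tau_0>0$ likewise only works if you absorb the constant $C$.

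\textbf{The fix (the paper's route).} Use the exponential slack you already found in the linear argument to absorb $C$, rather than trying to reach $\tau=0$:
\begin{itemize}
\item Proposition~\ref{prop:SWXnonconc} gives $\mathbf{d}_{\mathcal{C}}(M^i_1)\le C\epsilon_i$, so the hypothesis yields $\mathbf{d}_{\mathcal{C}}(M^i_L)\ge C^{-1}e^{\lambda L}\mathbf{d}_{\mathcal{C}}(M^i_1)$.
\item This passes to the limit as $\Vert u(L)\Vert\ge C^{-1}e^{\lambda L}\Vert u(1)\Vert$, where $u(\tau)=e^{(\tau-1)\mathcal{L}_{\mathcal{C}}}u(1)$ is a genuine $L^2$ solution on $[1,2L]$.
\item Your mode splitting on $[1,L]$ then gives $\Vert u_{\mathrm{low}}(L)\Vert\le C e^{-\lambda}e^{-\epsilon(L-1)}\Vert u(L)\Vert$, which is still small once $L\ge L_0$.
\item The contradiction with $\Vert u(2L)\Vert\le e^{\lambda L}\Vert u(L)\Vert$ follows as before, since $\Vert u(L)\Vert\ge e^{\lambda L}>0$.
\end{itemize}
This is why the proposition is stated for $L\ge L_0$ rather than $L=1$.
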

  Note that in \cite[Remark 3.8]{SWX25} this result is stated for
  $L=1$, which is a stronger result. For that we may need to define
  the distance function more   carefully, in terms of the function
  $D_{n,k}$ used in \cite{SWX25}. 
\begin{proof}
  Let $L > 0$, and suppose that we have a sequence of
  flows $M^i_\tau$ converging to $\mathcal{C}$ locally smoothly
  on compact subsets of $(0,2L]\times \mathbb{R}^{3}$ such that
  \[ \mathbf{d}_{\mathcal{C}}(M^i_L) \geq e^{L\lambda}
    \mathbf{d}_{\mathcal{C}}(M^i_0), \text{ and }
    \mathbf{d}_{\mathcal{C}}(M^i_{2L}) \leq e^{L\lambda}
    \mathbf{d}_{\mathcal{C}}(M^i_L). \]
   We will show that if $L$ is large enough, then this is a
   contradiction. We can write the $M^i_\tau$ as graphs of $u^i(x,
   \tau)$ over larger and larger subsets of $\mathcal{C}$, and
   if we let $\mathbf{d}_i = \mathbf{d}_{\mathcal{C}}(M^i_0)$,
   then $\mathbf{d}_i^{-1}u^i$ converges locally smoothly on compact
   subsets of $(0,2L]\times\mathbb{R}^{n+1}$ to a solution $u^\infty$
   of the linearized equation $\partial_\tau u^\infty = \mathcal{L}
   u^\infty$ on $\mathbb{C}$. Note that we may not have
   convergence at $\tau=0$. However from
   Proposition~\ref{prop:SWXnonconc} we have
   $\mathbf{d}_{\mathcal{C}}(M^i_1) \leq C
   \mathbf{d}_{\mathcal{C}}(M^i_0)$, and so from our hypothesis
   we have
   \[\mathbf{d}_{\mathcal{C}}(M^i_L) \geq e^{L\lambda}C^{-1}
     \mathbf{d}_{\mathcal{C}}(M^i_1). \]
   Using Proposition~\ref{prop:SWXnonconc} again, we find that
   \[ \Vert u^\infty(\cdot, L)\Vert_{L^2} \geq e^{L\lambda} C^{-1}
     \Vert u^\infty(\cdot, 1)\Vert_{L^2}, \text{ and } \Vert
     u^\infty(\cdot, 2L)\Vert_{L^2} \leq e^{L\lambda} 
     \Vert u^\infty(\cdot, L)\Vert_{L^2}. \]
  Note, however, that since there are no homogeneous solutions of the
  linearized equation with growth rate $\lambda$, there exists some
  $\lambda' < \lambda$ such that if
  $\Vert
     u^\infty(\cdot, L)\Vert_{L^2} \geq e^{-L\lambda} 
     \Vert u^\infty(\cdot, 2L)\Vert_{L^2}$, then $\Vert
     u^\infty(\cdot, 1)\Vert_{L^2} \geq e^{-(L-1)\lambda'} 
     \Vert u^\infty(\cdot, L)\Vert_{L^2}$. If we choose $L$ large
     enough so that $e^{(L-1)\lambda'}< e^{L\lambda}C^{-1}$, then we
     get the required contradiction. 
\end{proof}

While we do not need this,
using this type of result one can show that if $M_\tau$ has a $\mathcal{C}$-singularity
at infinity, then the limit
\[ \label{eq:growthlimit} \lim_{\tau\to\infty} \log \left(\frac{
      \mathbf{d}_{\mathcal{C}}(M_\tau)}{\mathbf{d}_{\mathcal{C}}
      (M_{\tau+1})}\right) \in \mathbb{R}\cup\{\infty\} \]
exists, and in fact (see \cite[Remark 3.8]{SWX25}) if the limit is
finite, then it equals a non-negative eigenvalue of the
linearized operator $-\mathcal{L}_{\mathcal{C}}$ that we will discuss
below. Since these eigenvalues lie in $\frac{1}{2}\mathbb{Z}$,
it follows that in the dichotomy above the nondegenerate case
is characterized by the limit in \eqref{eq:growthlimit} being 0, while
in the degenerate case the limit is at least $1/2$. A consequence of
this is the following, which gives us a more quantitative decay estimate for
degenerate cylindrical singularities.
\begin{prop}\label{prop:degendecay}
  Let $\eta_0, \kappa_1 > 0$. There exists a $\delta > 0$ depending on
  $\eta_0, \kappa_1$ with the following property. Suppose that
  $M_\tau$ is a rescaled mean curvature flow converging to
  $\mathcal{C}$ as $\tau\to\infty$, and $M_0$ is a $\delta$-graph over
  $\mathcal{C}$ on the ball $B_{\delta^{-1}}$. Assume moreover that the
  singularity at infinity is degenerate. Then we have
  $\mathbf{d}_{\mathcal{C}}(M_\tau) < \eta_0
  e^{-(\frac{1}{2}-\kappa_1)\tau}$ for all $\tau \geq 0$. 
\end{prop}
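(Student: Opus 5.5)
We argue by contradiction and compactness, using the three-annulus Proposition~\ref{prop:L23ann} with the non-half-integer rate $\lambda = \frac12 - \kappa_1$. We may assume $\kappa_1 < 1/4$, say, so that $\lambda\in(0,\frac12)\setminus\frac12\mathbb{Z}$. Fix $L\geq L_0(\lambda)$, to be chosen large, and let $\delta_1 = \delta(\lambda)$ be as in that proposition.

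\emph{Step 1: graphicality persists.} If $M_0$ is $\delta$-graphical over $\mathcal{C}$ on $B_{\delta^{-1}}$ with $\delta$ small, we want $M_\tau$ to be $\delta_1$-graphical on $B_{\delta_1^{-1}}$ for all $\tau\geq 0$. For $\tau$ in any fixed window $[0,T]$ this follows from pseudolocality and the smooth dependence of the flow on initial data near $\mathcal{C}$, given the area ratio bounds. For large $\tau$ it follows from convergence to $\mathcal{C}$, but not uniformly in the flow. So I would instead argue by contradiction on the whole statement.

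Suppose there are flows $M^i_\tau$ with $\delta_i\to 0$, each with a degenerate $\mathcal{C}$-singularity at infinity, and times $\tau_i$ with $\mathbf{d}_{\mathcal{C}}(M^i_{\tau_i}) \geq \eta_0 e^{-\lambda\tau_i}$. Since $M^i_0\to\mathcal{C}$, Proposition~\ref{prop:SWXnonconc} and the local regularity theory give $\mathbf{d}_{\mathcal{C}}(M^i_\tau)\to 0$ uniformly on bounded $\tau$-intervals. Hence $\tau_i\to\infty$.

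\emph{Step 2: propagating growth.} Let $\sigma_i\leq\tau_i$ be the first time at which $M^i$ fails to be $\delta_1$-graphical on $B_{\delta_1^{-1}}$ (or $\sigma_i=\infty$). On $[0,\sigma_i]$ we can use the three-annulus lemma along the discrete times $kL$. Since $\mathbf{d}_{\mathcal{C}}(M^i_0)\to0$ while $\mathbf{d}_{\mathcal{C}}(M^i_{\tau_i})$ is not too small relative to $e^{-\lambda \tau_i}$, I would aim to find some $k$ with
\[ \mathbf{d}_{\mathcal{C}}(M^i_{(k+1)L}) \geq e^{\lambda L}\mathbf{d}_{\mathcal{C}}(M^i_{kL}). \]
Proposition~\ref{prop:SWXnonconc} controls the comparison between $\tau_i$ and the nearest grid time. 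More precisely, if the ratio were always below $e^{\lambda L}$, then iterating would give $\mathbf{d}(M^i_{kL})\leq e^{\lambda kL}\mathbf{d}(M^i_0)$. That bound is increasing in $k$, however, so the contradiction should come instead from the condition at time $\tau_i$. I would choose the grid so that the growth condition holds at the last step before $\tau_i$. Once growth at rate $e^{\lambda L}$ occurs on one interval, Proposition~\ref{prop:L23ann} propagates it to all later intervals while graphicality holds. The distance then grows exponentially, so $\mathbf{d}_{\mathcal{C}}$ cannot tend to $0$, and graphicality must eventually fail. Either way this contradicts $M^i_\tau\to\mathcal{C}$.

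\emph{Step 3: the degenerate case is what rules out slow growth.} The remaining scenario is that the ratio is always at most $e^{\lambda L}$ and graphicality holds for all time, so that $\mathbf{d}$ decays at some rate. Here we use degeneracy. By the discussion around \eqref{eq:growthlimit}, the limiting decay rate of a degenerate singularity is at least $1/2 > \lambda$. Hence eventually $\mathbf{d}(M^i_{(k+1)L})\leq e^{-\lambda L}\mathbf{d}(M^i_{kL})$. Running the three-annulus lemma backwards (its contrapositive) then gives
\[ \mathbf{d}(M^i_{(k+1)L})\leq e^{-\lambda L}\mathbf{d}(M^i_{kL}) \quad\text{for all } k\geq 0. \]
To apply the lemma in this form, we use its version for rate $-\lambda$ (also not in $\frac12\mathbb{Z}$) applied to reversed ratios: if decay at rate $\lambda$ failed at step $k$, the distance would grow at rate $-\lambda$ there, and therefore at all later steps, contradicting the eventual decay.

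Iterating gives $\mathbf{d}(M^i_{kL})\leq e^{-\lambda kL}\mathbf{d}(M^i_0)\leq e^{-\lambda kL}\,o(1)$. Proposition~\ref{prop:SWXnonconc} interpolates between grid times at the cost of a constant $C e^{\lambda L}$, which is absorbed because $\mathbf{d}(M^i_0)\to0$. This gives $\mathbf{d}(M^i_\tau)<\eta_0e^{-\lambda\tau}$ for all $\tau\geq0$ and large $i$, which is the desired contradiction.

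\emph{Main obstacle.} The hardest part is ensuring $\delta_1$-graphicality on $B_{\delta_1^{-1}}$ for all $\tau\geq0$, which is needed to apply Proposition~\ref{prop:L23ann} globally in time. I would first try a continuity argument. As long as the decay estimate $\mathbf{d}\leq\eta e^{-\lambda\tau}$ holds with $\eta$ small, the $L^2$ smallness together with the area ratio bounds and Brakke/White local regularity yields $\delta_1$-graphicality on a slightly later interval. Hence the set of times where both graphicality and decay hold is open and closed.
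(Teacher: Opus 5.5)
Your Step~3 has the right mechanism, and it is the paper's. Apply Proposition~\ref{prop:L23ann} with the negative rate $\kappa_1-\frac12\notin\frac12\mathbb{Z}$. If $\mathbf{d}_{\mathcal{C}}(M_{\tau+L_0})\geq e^{(\kappa_1-\frac12)L_0}\mathbf{d}_{\mathcal{C}}(M_\tau)$ at one step, this persists at all later steps, which contradicts the $O(e^{-\tau/2})$ decay of a degenerate singularity. Hence every step decays, and Proposition~\ref{prop:SWXnonconc} interpolates between grid times at the cost of a constant $C_{L_0}$, absorbed by making $\mathbf{d}_{\mathcal{C}}(M_0)$ small. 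No sequence of flows is needed, and Step~2 (with the positive rate $+\lambda$) plays no role.

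However, this argument applies Proposition~\ref{prop:L23ann} on every window $[\tau,\tau+2L_0]$ with $\tau\geq0$. It therefore needs $M_\tau$ to be $\delta'$-graphical over $\mathcal{C}$ on $B_{\delta'^{-1}}$ for \emph{all} $\tau\geq 0$, uniformly in the flow. You never prove this. Step~3 simply assumes it. In Step~2, the claim that losing graphicality at a finite time contradicts $M_\tau\to\mathcal{C}$ is unjustified, since a priori the flow could leave the neighborhood and later return.

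The continuity argument you propose does not close. Suppose graphicality and the decay bound hold on $[0,T]$. Pseudolocality extends graphicality slightly past $T$, but not the decay bound. Proposition~\ref{prop:SWXnonconc} alone loses a fixed constant factor per unit time. Your only way to show that the step ending just after $T$ decays is the contrapositive of the three-annulus lemma, which transports a good ratio \emph{backward} from later times. That presupposes graphicality in the future, which is exactly what you are trying to prove.

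Frequency monotonicity is consistent with the following scenario: the ratio exceeds $e^{(\kappa_1-\frac12)L_0}$ at some step; the flow then drifts slowly out of the neighborhood, at a rate near $0$ (for instance along the rotation modes $z_iy$ or the non-integrable mode $y^2-2$); it later returns and converges to $\mathcal{C}$ at the degenerate rate. Compactness does not exclude this either, because the limit of such flows recentered at the exit times can be a rotated cylinder sitting on the boundary of the neighborhood.

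The missing ingredient is the stability theorem of Colding--Minicozzi~\cite{CM15_1}, based on their \L{}ojasiewicz inequality, which the paper invokes at the outset. It states that for every $\delta'>0$ there is $\delta>0$ such that if $M_0$ is a $\delta$-graph over $\mathcal{C}$ on $B_{\delta^{-1}}$ and $M_\tau\to\mathcal{C}$, then $M_\tau$ is a $\delta'$-graph over $\mathcal{C}$ on $B_{\delta'^{-1}}$ for all $\tau>0$. With that input, your Step~3 proves the proposition directly.
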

\begin{proof}
  Let $\kappa_1 \in (0,1/2)$. Given any $\delta' > 0$, by the work of
  Colding-Minicozzi~\cite{CM15_1} there is a $\delta > 0$ such that if
  $M_0$ is a $\delta$-graph over $\mathcal{C}$ on $B_{\delta^{-1}}$,
  then for all $\tau > 0$ we have that $M_\tau$ is a $\delta'$-graph
  over $\mathcal{C}$ on $B_{\delta'^{-1}}$. Let $L_0$ be the number from
  Proposition~\ref{prop:L23ann}, corresponding
  to $\gamma=\kappa_1-\frac{1}{2}$. Then if we
  choose $\delta'$ sufficiently small given $\kappa_1$, we have the
  following: if $\mathbf{d}_{\mathcal{C}}(M_{\tau+L_0}) \geq
  e^{(-\frac{1}{2}+\kappa_1)L_0} \mathbf{d}_{\mathcal{C}}(M_\tau)$ for some
  $\tau$, then also $\mathbf{d}_{\mathcal{C}}(M_{\tau+(k+1)L_0}) \geq
  e^{(-\frac{1}{2}+ \kappa_1)L_0}\mathbf{d}_{\mathcal{C}}(M_{\tau+k L_0})$ . This
  implies that the cylindrical singularity is nondegenerate. So in
  the degenerate case we must have $\mathbf{d}_{\mathcal{C}}(M_{\tau+L_0}) \leq
  e^{(-\frac{1}{2}+\kappa_1)L_0} \mathbf{d}_{\mathcal{C}}(M_\tau)$ for all
  $\tau$. Using Proposition~\ref{prop:SWXnonconc} it follows that we
  have $\mathbf{d}_{\mathcal{C}}(M_\tau) \leq C_{L_0}
  e^{(-\frac{1}{2}+\kappa_1)\tau}\mathbf{d}_{\mathcal{C}}(M_0)$ for
  all $\tau \geq 0$. By choosing $\delta$ sufficiently small, we can ensure that
 $C_{L_0} \mathbf{d}_{\mathcal{C}}(M_0) < \eta_0$.
\end{proof}

The following is another simple consequence of the frequency
monotonicity, that we will need later. 
\begin{prop}\label{prop:noC}
  Let $\kappa > 0$. There exist $L_0, \delta > 0$ such that if $M_\tau$
  has a cylindrical singularity at infinity (i.e. converges to
  $Q\mathcal{C}$ for a rotation $Q$), and in addition $M_0$
  is $\delta$-graphical over $\mathcal{C}$ on $B_{\delta^{-1}}$, then
  we have
  \[ \mathbf{d}_{\mathcal{C}}(M_{\tau+L_0}) \leq e^{\kappa L_0}
    \mathbf{d}_{\mathcal{C}}(M_\tau), \]
  for all $\tau\geq 0$. 
\end{prop}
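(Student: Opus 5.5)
The plan is to argue by contradiction, using the three-annulus result Proposition~\ref{prop:L23ann} together with a Colding--Minicozzi type stability of graphicality. Fix $\kappa>0$; we may shrink $\kappa$ so that $\lambda=\kappa\notin\frac12\mathbb{Z}$, which only strengthens the conclusion. Let $L_0,\delta_1$ be the constants given by Proposition~\ref{prop:L23ann} for this $\lambda$.

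First, following the proof of Proposition~\ref{prop:degendecay}, we use the work of Colding--Minicozzi~\cite{CM15_1} on uniqueness of cylindrical tangent flows. Given $\delta'>0$, there is $\delta>0$ with the following property: if $M_0$ is $\delta$-graphical over $\mathcal{C}$ on $B_{\delta^{-1}}$ and $M_\tau$ has a cylindrical singularity at infinity, then $M_\tau$ is $\delta'$-graphical over $\mathcal{C}$ on $B_{\delta'^{-1}}$ for all $\tau\ge0$. A consequence is that the limiting rotation $Q$ must be close to the identity, but we will not use $Q$ beyond this. We apply this with $\delta'=\delta_1$. Then the graphicality hypothesis of Proposition~\ref{prop:L23ann} holds on every window $[\tau,\tau+2L_0]$.

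Now suppose the conclusion fails, so that $\mathbf{d}_{\mathcal{C}}(M_{\tau_0+L_0})> e^{\kappa L_0}\mathbf{d}_{\mathcal{C}}(M_{\tau_0})$ for some $\tau_0\ge0$. Iterating Proposition~\ref{prop:L23ann} on the windows $[\tau_0+kL_0,\tau_0+(k+2)L_0]$ gives
\[ \mathbf{d}_{\mathcal{C}}(M_{\tau_0+kL_0})\ge e^{\kappa kL_0}\mathbf{d}_{\mathcal{C}}(M_{\tau_0})\quad\text{for all }k. \]
The hypothesis also forces $\mathbf{d}_{\mathcal{C}}(M_{\tau_0+L_0})>0$, so $\mathbf{d}_{\mathcal{C}}(M_{\tau_0})>0$ as well. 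Hence $\mathbf{d}_{\mathcal{C}}(M_\tau)\to\infty$ along the sequence $\tau_0+kL_0$.

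This is a contradiction, because the truncation gives $\dist\le1$, and the entropy bound then gives $\mathbf{d}_{\mathcal{C}}(M)^2\le\int_M e^{-|x|^2/4}\le C$ uniformly. So the growth inequality fails at every $\tau\ge0$, which is the claim.

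The main obstacle is the first step: making the graphicality over $\mathcal{C}$ itself, not over $Q\mathcal{C}$, persist for all time from a single $\delta$-graph condition at $\tau=0$. We need this precisely because the limit is only assumed to be $Q\mathcal{C}$. I would rely on the Łojasiewicz-type rigidity in~\cite{CM15_1}, exactly as it was invoked in Proposition~\ref{prop:degendecay}. The bounded drift of the axis, with total rotation controlled by $\delta$, keeps the flow within $\delta_1$ of $\mathcal{C}$. If needed, one can instead argue by compactness: take a sequence of counterexamples with $\delta_i\to0$ and pass to the limit.
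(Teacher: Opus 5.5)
Your proposal is correct and follows the paper's proof essentially verbatim: you use \cite{CM15_1} to keep $M_\tau$ $\delta$-graphical over $\mathcal{C}$ for all $\tau\ge0$, then iterate Proposition~\ref{prop:L23ann} with $\lambda=\kappa$ to get exponential growth of $\mathbf{d}_{\mathcal{C}}$, which contradicts boundedness (your bound via the truncation $\dist\le 1$ and the area ratio bounds is just a more direct form of the paper's ``cannot remain $\delta$-graphical''). Your step asserting $\mathbf{d}_{\mathcal{C}}(M_{\tau_0})>0$ is unnecessary, since the iteration already gives $\mathbf{d}_{\mathcal{C}}(M_{\tau_0+kL_0})\ge e^{(k-1)\kappa L_0}\,\mathbf{d}_{\mathcal{C}}(M_{\tau_0+L_0})$, and the right-hand side is positive by the strict failure inequality.
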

\begin{proof}
  Without loss of generality we can assume that $\kappa\in (0,1/2)$. 
  Given $\kappa$, consider the $\delta$ determined in
  Proposition~\ref{prop:L23ann} by $\gamma=\kappa$. By
  Colding-Minicozzi~\cite{CM15_1}, if we assume that $M_0$ is 
  $\delta'$-graphical over $\mathcal{C}$ on $B_{\delta'^{-1}}$ for
  sufficiently small $\delta'$, and $M_\tau$ has a cylindrical
  singularity at infinity (a rotation of $\mathcal{C}$), then we have
  that $M_\tau$ is $\delta$-graphical over $B_{\delta^{-1}}$ for all
  $\tau \geq 0$. If we then have
  $\mathbf{d}_{\mathcal{C}}(M_{\tau_0+L_0}) \geq e^{\kappa L_0}
    \mathbf{d}_{\mathcal{C}}(M_{\tau_0})$ for some $\tau_0$, it
    follows by using Proposition~\ref{prop:L23ann} repeatedly
    that $\mathbf{d}_{\mathcal{C}}(M_{\tau_0 + kL_0}) \geq e^{k\kappa L_0}
    \mathbf{d}_{\mathcal{C}}(M_{\tau_0})$ for all $k\geq 1$. This is
    a contradiction because then $M_\tau$ cannot remain
    $\delta$-graphical over $\mathcal{C}$ on $B_{\delta^{-1}}$ for all
    $\tau$. 
\end{proof}

We next consider Jacobi fields on $\mathcal{C}$, i.e. solutions of the
linearization of the rescaled mean curvature flow equation $\partial_\tau v =
\mathcal{L}_{\mathcal{C}} v$, where
\[ \mathcal{L}_{\mathcal{C}} v &= \Delta v + \frac{1}{2}(v - x\cdot
  \nabla v) + |A|^2 v \\
  &=\Delta v - \frac{1}{2} y \partial_y v
  + v. \]
Such Jacobi fields can be used to model the behavior of a rescaled
flow $M_\tau$ close to $\mathcal{C}$. 
Any eigenfunction of $\mathcal{L}_{\mathcal{C}}$ satisfying
$\mathcal{L}_{\mathcal{C}} v = \lambda v$ gives rise to a solution
$e^{\lambda \tau} v$ of the linearized equation. It is well known (see
e.g. \cite[Section 2.1]{SX22}) that the eigenvalues are
$\frac{1}{2}\mathbb{Z} \cap (-\infty, 1]$, and the
eigenfunctions with the largest eigenvalues are
given by:
\begin{itemize}
  \item $\lambda=1$: spanned by $v=1$, corresponding to translation of
    the (unrescaled) flow in time.
  \item $\lambda=\frac{1}{2}:$ spanned by linear functions on
    $\mathbb{R}^3$ restricted to $\mathcal{C}$. Those spanned by
    $z_1,z_2$ correspond to translations of the cylinder orthogonal to
    the $\mathcal{R}$-factor. The eigenfunction $y$ is more subtle,
    since $\mathcal{C}$ is translation invariant in the
    $y$-direction. As discussed in \cite{SX22}, the eigenfunction $y$
    arises geometrically from
    $y$-translations of nondegenerate singularities. It will play a key
    role in this paper. 
  \item $\lambda=0$: spanned by $z_iy$, corresponding to rotations,
    and $y^2-2$, which is the ``non-integrable'' Jacobi field.
  \end{itemize}
It is natural to expect that generically the flow should encounter
singularities modeled on the ``least decaying'' Jacobi fields that
do not correspond to symmetries. In our case this is $y^2-2$, and
indeed this arises in the asymptotics \eqref{eq:nondegasympt}. The
decaying asymptotics in the ``degenerate'' alternative (b) corresponds
to the fact that other Jacobi fields decay at least as $e^{-\tau/2}$.

It will be convenient to record the following results, making a more explicit
connection between the Jacobi fields above, and the corresponding
symmetries.

\begin{lemma}\label{lem:geomJacobi}
  Suppose that the $L^i_t$ are mean curvature flows for $t\in [-2,0)$,
  with corresponding rescaled flows $M^i_\tau$ for $\tau\in [-\ln 2,
  \infty)$. Suppose that the $M^i_\tau$
  converge smoothly on compact subsets of $[-\ln 2 ,\infty) \times
  \mathbb{R}^3$ to $\mathcal{C}$. In addition, let $s_i, \beta_{1,i},
  \beta_{2,i} \in \mathbb{R}$ and $Q_i \in SO(3)$ be rotations with axes
  orthogonal to the $y$-axis, such that
    \[ |s_i| + |\beta_{1,i}| + |\beta_{2,i}| + |Q_i - Id| = \gamma_i
      \to 0. \]
   Consider the transformed flows
  \[  \tilde{L}^i_t := Q_i L^i_{t+s_i} + (0, \beta_{1,i},
    \beta_{2,i}), \]
  and let $N^i_\tau$ be the corresponding rescaled
  flows.

  Fixing a large $R > 0$, for large $i$ we can write
  $N^i_\tau$ as the graph of $v_i(x,\tau)$ over $M^i_\tau$ on
  $[0,R^2] \times B_R$. These functions can also be viewed as functions
  on $\mathcal{C}$. Then along a subsequence
  \[ \lim_{i\to \infty} \gamma_i^{-1} v_i(x, \tau) = b e^\tau + (c_1
    z_1 + c_2 z_2) e^{\tau/2} + (d_1 z_1 y + d_2 z_2 y) \]
  smoothly on $[0,R^2] \times B_R$, where $b, c_k, d_k$ are suitable
  constants. Moreover we have
  \[ C^{-1} < |b| + |c_1| + |c_2| + |d_1| + |d_2| < C \]
  for a fixed constant $C$. 
\end{lemma}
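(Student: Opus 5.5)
The plan is to compute the rescaled transformed flows $N^i_\tau$ explicitly in terms of $M^i_\tau$, and then read off the graph function $v_i$ to first order in $\gamma_i$. Since $R$ is fixed and $\tau\in[0,R^2]$, every factor $e^{\tau}$ or $e^{\tau/2}$ is bounded by $e^{R^2}$. So all transformations below are $O(\gamma_i)$-close to the identity, uniformly on $[0,R^2]\times B_R$.

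\textbf{Step 1: the explicit formula.} By definition
\[ N^i_\tau = e^{\tau/2}\bigl(Q_i L^i_{-e^{-\tau}+s_i} + (0,\beta_{1,i},\beta_{2,i})\bigr). \]
Define $\tau'$ by $e^{-\tau'} = e^{-\tau}(1-s_ie^{\tau})$, so that $\tau' = \tau + s_ie^\tau + O(\gamma_i^2)$. Since $L^i_{-e^{-\tau'}} = e^{-\tau'/2}M^i_{\tau'}$, we get
\[ N^i_\tau = \bigl(1 - \tfrac12 s_i e^{\tau} + O(\gamma_i^2)\bigr)\, Q_i M^i_{\tau+s_ie^\tau+O(\gamma_i^2)} + e^{\tau/2}(0,\beta_{1,i},\beta_{2,i}). \]
So $N^i_\tau$ is obtained from $M^i_\tau$ by composing four maps: a time shift of size $s_ie^\tau$, a rotation $Q_i$, a dilation by $1-\tfrac12 s_ie^\tau$, and a translation by $e^{\tau/2}\beta_i$. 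Each of these is $O(\gamma_i)$-close to the identity.

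\textbf{Step 2: linearization.} For a surface $M$ smoothly close to $\mathcal{C}$ and a deformation $x\mapsto x+\gamma W(x)+O(\gamma^2)$, the graph function of the image over $M$ equals $\gamma\, W\cdot\mathbf{n}_M + O(\gamma^2)$ in $C^k$ on $B_R$. Since $M^i_\tau\to\mathcal{C}$ smoothly, we may replace $\mathbf{n}_{M^i}$ by the normal of $\mathcal{C}$ up to an $o(\gamma_i)$ error. On $\mathcal{C}$ we have $\mathbf{n}=(0,z_1,z_2)/\sqrt2$ and $x\cdot\mathbf{n}=\sqrt2$. The four maps then contribute as follows.
\begin{itemize}
\item The time shift moves the surface normally by $s_ie^\tau\,\partial_\tau M^i$. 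This is $O(\gamma_i)\cdot o(1)=o(\gamma_i)$, because the smooth convergence to the static limit $\mathcal{C}$ forces the normal velocity to tend to $0$.
\item The dilation contributes $-\tfrac{\sqrt2}{2}s_ie^\tau$.
\item The translation contributes $e^{\tau/2}(\beta_{1,i}z_1+\beta_{2,i}z_2)/\sqrt2$.
\item Write $Q_i=\exp(\Theta_i)$, where $\Theta_i$ is the skew matrix of a rotation about the axis $(0,\omega_1,\omega_2)$ with $|\Theta_i|\sim|Q_i-\mathrm{Id}|$. Its normal component $(\Theta_i x)\cdot\mathbf{n}$ is a linear combination of $z_1y$ and $z_2y$; for instance rotation about the $z_2$-axis gives $\pm\theta\, z_1 y/\sqrt2$. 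Because the axis is orthogonal to the $y$-axis, no rotation within the $z$-plane occurs, and that rotation would act trivially on $\mathcal{C}$ anyway.
\end{itemize}

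\textbf{Step 3: limits and nondegeneracy.} Pass to a subsequence along which $s_i/\gamma_i$, $\beta_{k,i}/\gamma_i$ and $\Theta_i/\gamma_i$ converge to limits $\bar s,\bar\beta_k,\bar\Theta$ with $|\bar s|+|\bar\beta|+|\bar\Theta|$ comparable to $1$. Dividing by $\gamma_i$ gives the stated limit, with $b=-\bar s/\sqrt2$, $c_k=\bar\beta_k/\sqrt2$, and $(d_1,d_2)$ a fixed invertible linear image of the rotation axis vector $\bar\Theta$. The map $(\bar s,\bar\beta,\bar\Theta)\mapsto(b,c,d)$ is a fixed injective linear map, so $|b|+|c|+|d|$ is bounded above and below by constants times $|\bar s|+|\bar\beta|+|\bar\Theta|\sim 1$.

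\textbf{Main obstacle.} The main care point is making the $o(\gamma_i)$ errors uniform in $C^k$ on $[0,R^2]\times B_R$. This covers both the time-shift term, which needs the smooth convergence of $\partial_\tau M^i$ to $0$ including near $\tau=0$, and the identification of graphs over $M^i_\tau$ with graphs over $\mathcal{C}$. I would handle it by writing $M^i_\tau$ as graphs of $w_i\to0$ over $\mathcal{C}$ and expanding the composed parametrization with Taylor's theorem. The convergence on $[-\ln2,\infty)$ provides the room needed for the time shift near $\tau=0$.
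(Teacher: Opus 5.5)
Your proposal is correct and follows essentially the same route as the paper: write $N^i_\tau=(1-e^\tau s_i)^{1/2}Q_iM^i_{\tau-\log(1-e^\tau s_i)}+e^{\tau/2}(0,\beta_{1,i},\beta_{2,i})$, read off the first-order normal contributions of the dilation, translation and rotation on $\mathcal{C}$, and absorb the errors from $M^i_\tau\neq\mathcal{C}$ as $o(\gamma_i)$. Your explicit treatment of the time-shift error, via the normal velocity tending to $0$ and the room provided by convergence on $[-\ln 2,\infty)$, is a slightly more careful version of what the paper summarizes as ``additional errors of order $\delta_i\gamma_i$''.
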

\begin{proof}
  This result is quite standard, but for the convenience of the reader
  we give some details.  Note that by a straightforward calculation we have
  \[ N^i_\tau = (1 - e^\tau s_i)^{1/2} Q_i M^i_{\tau - \log(1-e^\tau
      s_i)} + e^{\tau/2} (0, \beta_{1,i}, \beta_{2, i}). \]
  We fix a large $R$, and consider $N^i_\tau$ for large $i$ in the region $[0,R^2]\times
  B_R$.  First suppose that $M_\tau^i =
  \mathcal{C}$ for all $\tau, i$, so
  \[ N^i_\tau = (1 - e^\tau s_i)^{1/2} Q_i \mathcal{C} + e^{\tau/2}
    (0, \beta_{1,i}, \beta_{2, i}). \]
  We can write the rotation matrix $Q_i = e^{A_i}$, where
  \[ A_i = \begin{pmatrix}
      0 & -d_{2,i} & d_{1,i} \\
      d_{2,i} & 0 & 0 \\
      -d_{1,i} & 0 & 0 \end{pmatrix}, \]
  and we can choose the norm so that $|Q_i - Id| = |d_{1,i}| +
  |d_{2,i}|$. At a point $(y, z_1,z_2)\in \mathcal{C}$ the unit normal vector is
  $2^{-1/2} (0,z_1, z_2)$, and the vector induced by the infinitesimal
  rotation $A_i$ is
  \[ A_i \begin{pmatrix} y\\ z_1 \\z_2 \end{pmatrix} = \begin{pmatrix}
      -d_{2,i}z_1 + d_{1,i}z_2 \\ d_{2,i}y \\ - d_{1,i}
      y \end{pmatrix}. \]
  The normal component of this is $2^{-1/2}(d_{2,i} z_1y - d_{1,i}
  z_2y)$. 
  Therefore for large $i$ we can write $Q_i\mathcal{C}$ as the graph
  of the function $2^{-1/2}(d_{2,i} z_1y - d_{1,i}
  z_2y) + O(\gamma_i^2)$ over $\mathcal{C}$, on the region
  $[0,R^2]\times B_R$. Considering also the scaling by $(1-e^\tau
  s_i)^{1/2}$ and the translation by $e^{\tau/2}(0,\beta_{1,i},
  \beta_{2,i})$ we find that $N^i_\tau$ is the graph of the function
  $v_i$ over $\mathcal{C}$, where
  \[ v_i(y, z_1,z_2) = -\frac{\sqrt{2}}{2} e^\tau s_i + 2^{-1/2} e^{\tau/2}
    (\beta_{1,i}z_1 + \beta_{2,i}z_2) + 2^{-1/2}(d_{2,i} z_1y - d_{1,i}
  z_2y) + O(\gamma_i^2). \]
  Note that the constant in the $O(\gamma_i^2)$ term depends on $R$,
  but for fixed $R$, if we let $i\to\infty$, then we get the required
  result, up to relabeling the constants. 

  Consider now the more general setting, where the $M^i_\tau$ are
  $\delta_i$-graphical over $\mathcal{C}$ on $[0,R^2]\times B_R$, with
  $\delta_i\to 0$. Then in the discussion above we obtain additional
  errors of order $\delta_i\gamma_i$. After dividing by $\gamma_i$,
  these still converge to zero as $i\to\infty$, so we obtain the same
  result. 
\end{proof}

\begin{lemma}\label{lem:geomJacobi2}
  There are $\epsilon, C > 0$ with the following property. 
  Suppose that $M_\tau$ is a rescaled mean curvature flow
  that is $100^{-1}$-graphical over $\mathcal{C}$ on the
  ball $B_R$, with $R > 10$, for $\tau\in [T-1,T+1]$ for some $T > 1$. Let $s\in
  \mathbb{R}$, $x_0\in \mathbb{R}^3$ and $Q\in SO(3)$ such that
  \[ R e^T |s| + e^{T/2} |x_0| + R |Q - Id| < \epsilon. \]
  Consider the rescaled mean curvature flow $N_\tau$ defined by
  \[\label{eq:Ntaudefn}
    N_\tau = (1-e^\tau s)^{1/2} QM_{\tau - \log(1-e^\tau s)} + e^{\tau/2}
    x_0, \]
  for $\tau\in [T-1/2, T +1/2]$. Then $N_\tau$ is $100^{-1}$-graphical
  over $M_\tau$ on $B_{R-1}$, where the graphicality function $v$
  satisfies
  \[ |v(x, \tau)| < C (R e^\tau |s| + e^{\tau/2} |x_0| + R|Q - Id|). \] 
\end{lemma}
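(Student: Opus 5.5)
We decompose the transformation \eqref{eq:Ntaudefn} into three elementary steps and control the graphicality function of each step over the previous one. Write $\sigma(\tau) = -\log(1-e^\tau s)$. For $\tau\in[T-1/2,T+1/2]$ we have $e^\tau|s| \le e^{1/2}e^T|s| < e^{1/2}\epsilon/R$. So $|\sigma(\tau)| \le 2e^\tau|s|$ is small, and $\tau+\sigma(\tau)\in[T-1,T+1]$ once $\epsilon$ is small. The intermediate flows are:
\begin{itemize}
\item[(i)] $M_\tau \mapsto M_{\tau+\sigma}$, a time shift;
\item[(ii)] $M_{\tau+\sigma}\mapsto (1-e^\tau s)^{1/2}M_{\tau+\sigma}$, a dilation by a factor $\rho$ with $|\rho-1|\lesssim e^\tau|s|$;
\item[(iii)] a rotation by $Q$;
\item[(iv)] a translation by $e^{\tau/2}x_0$.
\end{itemize}
Each step moves a point $x\in B_R$ by a vector whose size we can estimate. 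The rescaled flow equation gives $|\partial_\tau x| = |H\nu - x^\perp/2|\lesssim R$ on $B_R$, using the curvature bound from $100^{-1}$-graphicality over $\mathcal{C}$. So the time shift moves points by $\lesssim R|\sigma|\lesssim Re^\tau|s|$. The dilation moves points by $|\rho-1||x|\lesssim Re^\tau|s|$. The rotation moves points by $|Q-Id||x|\le R|Q-Id|$. The translation moves points by $e^{\tau/2}|x_0|$. All of these are bounded by $C\epsilon$.

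Next we convert displacement bounds into graph bounds. We use the following elementary fact. Suppose $\Sigma$ is $100^{-1}$-graphical over $\mathcal{C}$ on $B_R$, so that it has bounded second fundamental form and normals close to those of $\mathcal{C}$. Suppose $\Sigma'$ is the image of $\Sigma$ (or a nearby time slice) under a map moving points by at most $\eta\le C\epsilon$, with $C^2$-closeness also of size $C\epsilon$. Then on $B_{R-1/4}$, $\Sigma'$ is a normal graph over $\Sigma$ with $|v|\le C\eta$ and $|\nabla v|+|\nabla^2 v|\le C\epsilon$.

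For the rigid motions and the dilation, the $C^2$ control is immediate: these maps are $C^2$-close to the identity on $B_R$, with derivatives of size $|Q-Id|+|\rho-1|$, and that is $\lesssim\epsilon$ since $R>10$. For the time shift, we use interior parabolic estimates for the graph function of $M_\tau$ over $\mathcal{C}$ on $B_R\times[T-1,T+1]$. These bound $\partial_\tau$ of the graph function and its spatial derivatives by a uniform constant, so the time shift also gives $C^2$-control of size $C|\sigma|$. Shrinking radii slightly, by $1/4$ at each of the steps, gives the final radius $R-1$ since the total loss is at most $1$. Composing graphs then yields the claim. If $\Sigma_2$ is a graph of $v_2$ over $\Sigma_1$, and $\Sigma_1$ is a graph of $v_1$ over $\Sigma_0$, all with small $C^2$ norms, then $\Sigma_2$ is a graph over $\Sigma_0$ with function $v$ satisfying $|v|\le C(|v_1|+|v_2|)$ and $C^2$ norm $\le C(\|v_1\|_{C^2}+\|v_2\|_{C^2})$. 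Choosing $\epsilon$ small makes the total $C^2$ norm below $100^{-1}$. Summing the four displacement bounds gives $|v|\le C(Re^\tau|s|+e^{\tau/2}|x_0|+R|Q-Id|)$.

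The main obstacle is the bookkeeping near the boundary of the ball: making sure every point of $N_\tau\cap B_{R-1}$ is hit by the graph over $M_\tau\cap B_{R-1}$, and that relative closedness is preserved. This follows because all displacements are $\le C\epsilon\ll 1$, together with the $100^{-1}$-graphicality over $\mathcal{C}$ on the larger ball $B_R$. Because of this, we choose to work over $\mathcal{C}$ throughout: we write each intermediate surface as a graph over $\mathcal{C}$ and take differences of the graph functions, rather than composing graphs abstractly.
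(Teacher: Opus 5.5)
Your proposal is correct and is essentially the paper's argument. The paper first writes the time-shifted slice $M_{\tau-\log(1-e^\tau s)}$ as a small graph over $M_\tau$ using bounded geometry. It then bounds the displacement $|(1-e^\tau s)^{1/2}Qx+e^{\tau/2}x_0-x|\le C(Re^\tau|s|+e^{\tau/2}|x_0|+R|Q-Id|)$ on $B_R$ and concludes, so your four-step decomposition and graph composition spell out the same idea in more detail.

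One small correction does not affect the outcome: because of the drift term $x^\perp/2$, the interior estimates bound $\partial_\tau$ of the graph function and its spatial derivatives only by $C(1+R)$, not by a uniform constant. This is exactly why your time-shift displacement correctly carries the factor $R$, which the paper's ``$Ce^T|s|$-graphical'' step omits. Since the hypothesis controls $Re^T|s|$, the resulting $C^2$ control is still $O(\epsilon)$.
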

\begin{proof}
This result holds more generally, and only relies on the fact that by
our assumptions $M_\tau$ has bounded geometry in $[T-1,T+1]\times B_R$. From this it
follows first that if $\epsilon$ is sufficiently small, then $M_{\tau -
  \log(1-e^\tau s)}$ is $C e^T|s|$-graphical over $M_\tau$. Given a
point $x\in B_R$, we have
\[ |\Big((1 - e^\tau s)^{1/2}Q x + e^{\tau/2} x_0\Big) - x| & \leq
  C|Q-Id| |x|+ e^\tau |s| |x| + e^{\tau/2} |x_0| \\
  &\leq C( Re^\tau |s| + e^{\tau/2} |x_0| + R|Q-Id|) \]
for $\tau\in [T-1,T+1]$, if $\epsilon$ is sufficiently small. The
required result follows from this. 
\end{proof}

The following is a consequence of pseudolocality (see 
  Ilmanen-Neves-Schulze~\cite[Theorem 1.5]{INS19}), and  the interior
  estimates of Ecker-Huisken~\cite{EH91}. See Sun-Xue~\cite[Theorem
  2.4]{SX22} for a proof. 
\begin{prop}\label{prop:pseudo}
  Given $\delta_0 > 0$ there exist $\delta_1, C_1 > 0$, depending on
  $\delta_0$ (and the area ratio bounds)
  satisfying the following. Suppose that $M_\tau$ is a
  rescaled mean curvature flow such that $M_0$ is a $\delta_1$-graph
  over $\mathcal{C}$ on the ball $B_R$ for some $R > C_1$. Then for
  $\tau\in [0,10]$, $M_\tau$ is a $\delta_0$-graph over $\mathcal{C}$
  on the ball $B_{e^{\tau/2}(R-C_1)}$. 
\end{prop}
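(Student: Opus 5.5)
The statement is Proposition~\ref{prop:pseudo}. I would prove it by undoing the rescaling, applying pseudolocality to the unrescaled flow at every point of $M_0\cap B_{R-C_1}$, and then rescaling back. Concretely, let $L_t$ be the unrescaled flow with $M_\tau = e^{\tau/2}L_{-e^{-\tau}}$. Then $L_{-1}=M_0$, and for $\tau\in[0,10]$ the slice $M_\tau$ is $e^{\tau/2}L_{t}$ with $t=-e^{-\tau}\in[-1,-e^{-10}]$. This is a time interval of length at most $1$.

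The key observation is that the cylinder is uniformly smooth at scale one. On $B_R$, $M_0$ is a $\delta_1$-graph over $\mathcal{C}$, and $\mathcal{C}$ has $|A|\le 1/\sqrt 2$. So for $\delta_1$ small, every point $p\in M_0\cap B_{R-C_1}$ has a neighbourhood $B_{r_0}(p)\cap M_0$, with $r_0$ a fixed small radius, that is a Lipschitz graph with small constant over a plane. Applying pseudolocality (Ilmanen-Neves-Schulze~\cite[Theorem 1.5]{INS19}) with the area ratio bounds gives curvature bounds for $L_t$ in a smaller ball $B_{\epsilon r_0}(p)$ for $t\in[-1,-1+\epsilon r_0^2]$. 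The Ecker-Huisken interior estimates~\cite{EH91} upgrade this to bounds on all derivatives. For the full interval up to $t=-e^{-10}$ I would instead use a comparison argument. Since $M_0$ is $\delta_1$-close to $\mathcal{C}=\sqrt{-t}\,\mathcal{C}|_{t=-1}$ on the huge ball, the exact cylinder flow $\sqrt{-t}\,\mathcal{C}$ is a smooth solution with bounded geometry. Local stability of the mean curvature flow near a smooth flow with bounded curvature then says that $L_t$ stays $\delta'$-close to $\sqrt{-t}\,\mathcal{C}$ on balls shrinking by a fixed amount $C_1'$ over the time interval. This stability is itself a pseudolocality statement: one can iterate the local graphical estimate over finitely many short time steps, the number of steps depending only on $\delta_0$. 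The closeness constant degrades at each step, but $\delta_1$ is chosen after $\delta_0$.

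Concretely, I would run a compactness/contradiction argument. Suppose there are flows with $\delta_1^i\to0$, $R_i\ge C_1^i\to\infty$ that violate the conclusion at some $\tau_i\in[0,10]$ and at some point $x_i\in B_{e^{\tau_i/2}(R_i-C_1^i)}$. Translate in $y$ so that $x_i$ has bounded $y$-coordinate; the estimate is invariant under this because $\mathcal{C}$ is $y$-translation invariant, up to replacing the ball by a shifted one of comparable size, which the choice of $C_1$ absorbs. Pseudolocality gives uniform local curvature bounds on $[-1,-e^{-10}]$ near the relevant points. By Brakke compactness together with the unit regularity and multiplicity one coming from closeness to the cylinder, we pass to a limit flow that is smooth on $[-1,-e^{-10}]$ with initial data $\mathcal{C}$. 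Uniqueness of smooth flows with bounded curvature identifies the limit as $\sqrt{-t}\,\mathcal{C}$. Smooth convergence then contradicts the failure of $\delta_0$-graphicality. Finally, rescaling by $e^{\tau/2}$ converts the unrescaled ball $B_{R-C_1}$ at time $t$ into $B_{e^{\tau/2}(R-C_1)}$, which gives the stated radius.

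The main obstacle is the step that turns pseudolocality, which is inherently short-time, into control over the whole fixed interval $\tau\in[0,10]$ with only a fixed loss $C_1$ in the radius. One needs curvature bounds that are uniform up to time $-e^{-10}$, and these must be centered at points anywhere in the large ball. The contradiction argument above handles this, but it requires care that the limit is smooth up to the final time: the limit can only become singular at $t=0$, and we stop at $t=-e^{-10}$.
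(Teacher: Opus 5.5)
Your ingredients match the paper's. The paper gives no argument of its own: it attributes the statement to pseudolocality \cite[Theorem 1.5]{INS19} plus the Ecker--Huisken interior estimates \cite{EH91}, and refers to \cite[Theorem 2.4]{SX22} for a proof. However, your concrete contradiction argument does not get past the obstacle you correctly identify. Pseudolocality does not give uniform curvature bounds on $[-1,-e^{-10}]$. The cylinder $\mathcal{C}$ is a small-Lipschitz graph over its tangent planes only on balls of radius $r_0\ll\sqrt2$, so \cite[Theorem 1.5]{INS19} controls $L_t$ only for $t\in[-1,-1+\epsilon^2r_0^2]$. Beyond that time your limit is merely a Brakke flow, and the two steps you use to pin it down are circular. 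You identify the limit with $\sqrt{-t}\,\mathcal{C}$ by uniqueness of smooth bounded-curvature flows, which needs smoothness. You justify smoothness by saying the limit ``can only become singular at $t=0$'', which is a property of $\sqrt{-t}\,\mathcal{C}$. As written, nothing excludes a limit that loses mass or becomes singular inside $(-1,-e^{-10}]$.

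There are two standard ways to close this.

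(a) Run the compactness argument only over a fixed short step $\Delta\tau\approx\epsilon^2r_0^2$ of rescaled time. On such a step, pseudolocality and \cite{EH91} genuinely give bounds on all derivatives on balls exhausting $\mathbb{R}^3$. The limit is then a complete smooth flow of bounded curvature starting from $\mathcal{C}$, hence the shrinking cylinder (by Chen--Yin uniqueness, or by barriers). So $M_{\tau_0+\Delta\tau}$ is again $\delta'$-close to $\mathcal{C}$ on a ball smaller by a fixed amount, which restores the pseudolocality hypothesis at the same scale $r_0$. Iterating $\lceil 10/\Delta\tau\rceil$ times, with the closeness constants chosen backwards from $\delta_0$, produces $\delta_1$ and $C_1$. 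This is what your first paragraph gestures at. Note that iterating pseudolocality by itself fails, since each application outputs a worse graph on a smaller ball; it is the return to closeness with $\mathcal{C}$ that resets the scale.

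(b) Alternatively, identify the Brakke limit by weak methods.
\begin{itemize}
\item Avoidance with compact barriers puts its support inside $\sqrt{-t}\,\mathcal{C}$.
\item The constancy theorem makes $\mu_t$ equal to $\theta(t)$ times area on $\sqrt{-t}\,\mathcal{C}$, with $\theta$ integer-valued, nonincreasing, and $\theta(-1)=1$.
\item Unit-regularity, which is preserved under limits, rules out sudden vanishing, because at the first vanishing time the Gaussian density at points of the cylinder is $1$.
\item White's local regularity theorem then gives smooth convergence.
\end{itemize}

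Finally, translating only in $y$ does not make the initial data near the failure point converge to $\mathcal{C}$ if that point drifts away from the axis. You also need the easy sphere-barrier argument excluding failures at points far from $\mathcal{C}$, where $L_{-1}$ is empty.
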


We will need the following non-concentration estimate. The basic idea
appears in \cite{LSSz22}, using Ecker's log-Sobolev
inequality~\cite{Ecker}, and similar estimates were also used in  
\cite{LSz24,Ghosh25}, except we need to be more careful about the
error obtained from outside of the graphical region. Note that a
different non-concentration estimate was shown in \cite{SWX25,SWX25_2},
which has the advantage of being more global, but requires the
``reference'' flow to be a generalized cylinder, which is too
restrictive for our application. 

\begin{prop}\label{prop:nonconc}
  Suppose that $\delta_0 > 0$ is sufficiently small, and $\delta_1,
  C_1$ are determined by Proposition~\ref{prop:pseudo}. Let $R >
  2C_1$. Suppose that $M_\tau, N_\tau$ are two rescaled mean curvature
  flows, such that $M_0, N_0$ are $\delta_1$-graphs over $\mathcal{C}$
  on the ball $B_R$.
  By Proposition~\ref{prop:pseudo} we can then
  write $N_\tau$ as the graph of $v(x, \tau)$ over $M_\tau$ on the ball
  $B_{e^{\tau/2}(R-C_1)}$ for $\tau\in [0,2]$, with $|v(x,\tau)| <
  3\delta_0$. Suppose that we have the potentially better bound 
  $|v(x,\tau)| < \delta$ for $\tau\in [0,2]$ on these balls.

  There is a $c > 0$, and given $\kappa_3 > 0$, there exists $C_{\kappa_3} > 0$ (depending on
  $\kappa_3$) such that for $\tau\in [0,2]$ we have
  \[ \label{eq:L2growth} \Vert v(\cdot, \tau)\Vert^2_{L^2(B_R)} \leq
    e^{c\tau} \Vert v(\cdot,
    0)\Vert^2_{L^2(B_R)} + C_{\kappa_3} \delta^2e^{-(R-C_1-1)^2/
      (4+\kappa_3)}. \]
  In addition there exists $p_0 > 1$ (to be specific, we can choose
  $p_0=1.4$), such that for $\tau\in [1,2]$ we  have 
  \[ |\nabla^k v(x, \tau)| \leq C_{\kappa_3} \Big(
    \int_{M_0\cap B_R} |v(x,0)|^2\, e^{-|x|^2/4}\, d\mathcal{H}^2 +
    \delta^2e^{-(R-C_1-1)^2/ (4+\kappa_3)}\Big)^{1/2} e^{|x|^2 / 8p_0}, \]
  for $k=0,1,2$, and $|x| < e^{\tau/2}(R-C_1-2)$. 
\end{prop}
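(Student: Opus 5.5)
We derive an evolution equation for $v$ from the difference of the two rescaled flows, and then run the weighted energy argument of \cite{LSSz22} with a cutoff. Since $M_\tau$ and $N_\tau$ both solve the rescaled mean curvature flow and are $\delta_0$-close to $\mathcal{C}$ on $B_{e^{\tau/2}(R-C_1)}$, the graph function satisfies
\[ \partial_\tau v = \mathcal{L}_{M_\tau} v + E(v), \]
where $\mathcal{L}_{M_\tau} = \Delta + \frac12(v - x\cdot\nabla v) + |A|^2 v$ is the linearized operator on $M_\tau$, possibly plus a tangential correction from the identification of $N_\tau$ with $M_\tau$. The error $E$ is quadratic, with $|E|\le C\delta_0(|v|+|\nabla v|+|\nabla^2 v|)$, so for small $\delta_0$ it can be absorbed into the principal part. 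Fix a cutoff $\phi$, supported in $B_{R-C_1-1}$, with $\phi=1$ on $B_{R-C_1-2}$ and $|\nabla\phi|\le 2$. The support lies inside the graphical region for all $\tau\in[0,2]$, because $e^{\tau/2}\ge 1$.

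\textbf{Step 1: the $L^2$ estimate.} Compute
\[ \frac{d}{d\tau}\int_{M_\tau}\phi^2 v^2 e^{-|x|^2/4}. \]
The drift Laplacian is self-adjoint for the Gaussian weight, up to the error from the evolving measure, which involves $|H - x^\perp/2|^2$ and is harmless. Integrating by parts therefore gives $-2\int\phi^2|\nabla v|^2$, plus bounded zeroth-order terms of size $C\int\phi^2v^2$, plus a cutoff term bounded by $C\int|\nabla\phi|^2v^2 e^{-|x|^2/4}$. The cutoff term is supported in the annulus $R-C_1-2\le|x|\le R-C_1-1$, where $|v|<\delta$, so it is at most $C\delta^2 R e^{-(R-C_1-2)^2/4}$. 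Gronwall on $[0,2]$ then gives \eqref{eq:L2growth}. The weaker exponent $(R-C_1-1)^2/(4+\kappa_3)$ absorbs the polynomial factors and the shift of the radius by one; in the estimate I would move the cutoff slightly outward, to $R-C_1-1$ versus $R-C_1-\tfrac12$. Finally, $\|v(\cdot,0)\|_{L^2(B_R)}$ controls $\int\phi^2v^2$ at $\tau=0$.

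\textbf{Step 2: Gaussian-weighted growth.} This is the main step. We upgrade the $L^2$ bound to a pointwise bound with growth $e^{|x|^2/8p_0}$, which is much better than the $e^{|x|^2/8}$ that follows trivially from $L^2$. I would use Ecker's log-Sobolev inequality, as in \cite{LSSz22}, to obtain a hypercontractivity statement. For the rescaled flow, the Gaussian $L^2$ norm at time $0$ controls the Gaussian $L^{p(\tau)}$ norm at time $\tau$, with $p(\tau)=1+e^{\tau}$ in the Ornstein--Uhlenbeck normalization. By time $\tau=1$ this gives $p>2$, namely $p(1)\approx 3.7$. Concretely, differentiate $\big(\int \phi^{2}|v|^{p(\tau)}e^{-|x|^2/4}\big)^{1/p(\tau)}$. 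The gradient term $\int\phi^2|\nabla |v|^{p/2}|^2$ controls the entropy term by log-Sobolev on the nearly cylindrical $M_\tau$, whose Gaussian log-Sobolev constant is close to Ecker's. The cutoff and error terms are again bounded by $\delta^2 e^{-(R-C_1-1)^2/(4+\kappa_3)}$ times constants.

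\textbf{Step 3: from $L^p$ to pointwise.} Combine the $L^p$ bound with local parabolic estimates (Schauder or Ecker--Huisken) on unit parabolic balls around $x$ with $|x|<e^{\tau/2}(R-C_1-2)$, for $\tau\in[1,2]$. On such a ball the weight $e^{-|x|^2/4}$ is comparable to $e^{-|x|^2/4}$ up to $e^{C|x|}$. This gives
\[ |\nabla^k v(x,\tau)|\le C\,\|v\|_{L^p}\, e^{|x|^2/(4p)+C|x|}. \]
With $p\ge 3.7$ we have $4p>8p_0=11.2$, so the factor $e^{C|x|}$ is absorbed into $e^{|x|^2/8p_0}$, with a constant depending on $\kappa_3$. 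The region near $B_{e^{\tau/2}(R-C_1)}$ is covered since $e^{\tau/2}\ge1$.

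The delicate part of Step 2 is that the cutoff must be applied to the rescaled picture on a ball of fixed radius $R-C_1-1$, whereas the conclusion is claimed on the larger ball $e^{\tau/2}(R-C_1-2)$. For this I would apply Steps 1–3 to the flows restricted to balls of radius growing like $e^{\tau/2}$, i.e. with a time-dependent cutoff $\phi(e^{-\tau/2}x)$. Its time derivative has favorable sign relative to the drift term $-\tfrac12 x\cdot\nabla$, since in unrescaled variables it is a fixed cutoff. If this fails, I would fall back to working in the unrescaled flow with a fixed cutoff.
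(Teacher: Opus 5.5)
\textbf{Verdict.} Your route differs from the paper's and can be made to work. However, Step 2 as written makes a false claim, and the repair is the expanding cutoff you offer at the end only as a fallback, and for a different reason.

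\textbf{The problem in Step 2.} With $\phi$ fixed and supported in $B_{R-C_1-1}$, the transition annulus of $\phi$ sits at a fixed radius $R'\approx R-C_1$. On that annulus your argument only uses $|v|<\delta$. In Gaussian $L^{p}$ with $p=p(\tau)>2$ that region therefore contributes $\delta e^{-R'^2/(4p)}$, and no fixed-cutoff argument can do better. Squaring gives $\delta^2 e^{-R'^2/(2p)}$ with $2p\approx 7.4$, not $\delta^2 e^{-R'^2/(4+\kappa_3)}$. This loss matters downstream: the sharp exponent is what is used, for instance, against hypothesis (c) of Proposition~\ref{prop:3ann}.

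\textbf{Why the expanding cutoff fixes it.} With $\psi=\phi(e^{-\tau/2}x)$ the estimate closes, but not because of a favorable sign. What you have is an exact cancellation, $\partial_\tau\psi+\tfrac12 x\cdot\nabla\psi=0$. Consequently $w=\psi|v|$ satisfies $(\partial_\tau-\mathcal{L}_0)w\le Cw+C\delta_0|\nabla w|+E$, where $E=O(\delta)$ up to powers of $R$ and is supported on an annulus of radius $\approx e^{\tau/2}R'$.
\begin{itemize}
\item Because of where that annulus sits, $\Vert E(\tau)\Vert_{L^{p(\tau)}}\lesssim \delta R^{C}e^{-e^{\tau}R'^2/(4p(\tau))}\le \delta R^{C}e^{-R'^2/8}$, using $e^{\tau}/p(\tau)\ge \tfrac12$ for $p(\tau)=1+e^{\tau}$.
\item H\"older gives $\tfrac{d}{d\tau}\Vert w\Vert_{p(\tau)}\le C\Vert w\Vert_{p(\tau)}+\Vert E\Vert_{p(\tau)}$, which yields the correct error.
\end{itemize}
So the parabolic expansion of the cutoff is exactly matched to the growth of $p(\tau)$. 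It is needed for the size of the error, not only for the domain of the conclusion.

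\textbf{Other steps.} Step 1 with a fixed cutoff is fine. In Step 3, the constants in local estimates on unit balls depend on the drift $|x|$. Work in unrescaled variables (Ecker--Huisken) or on balls of radius $(1+|x|)^{-1}$; the resulting polynomial losses are absorbed by $e^{C|x|}$.

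\textbf{Comparison with the paper.} The paper avoids cutoffs altogether.
\begin{itemize}
\item It passes to $|v|^{3/2}$ to absorb the gradient term, so that $f=e^{-C_2\tau}|v|^{3/2}$ is an honest subsolution of the drift heat equation.
\item It glues $f$ to the explicit subsolution $\delta^{3/2}(e^{-\tau}|x|^2-(R-C_1-1)^2)$, giving a subsolution $\tilde f$ on all of $M_\tau$. The barrier $e^{-\tau}|x|^2$ is $|X|^2$ in unrescaled variables, i.e.\ the same expanding mechanism as your $\psi$.
\item The monotonicity formula gives the $L^2$ bound. Ecker's hypercontractivity is applied to $\tilde f^{4/3}$ globally. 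The monotonicity formula centered at other points then gives $\tilde f^{4q/3}\le CA^q e^{|x|^2/4}$ directly, with interior estimates used only for derivatives.
\item The error enters only once, through $\int_{M_0}\tilde f^{4/3}$ outside $B_{R-C_1-1}$. Its size is therefore automatic and there are no source terms.
\end{itemize}
The cost is the $3/2$-power trick (starting from $L^{4/3}$, hence a smaller $p_0$) and working with a subsolution on the whole flow. Your version absorbs first-order terms inside the $L^p$ energy identity and starts from $L^2$, so it allows a larger $p_0$. It also stays inside the smooth graphical region. The price is the source-term bookkeeping described above.
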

\begin{proof}
  The proof follows the argument in the proof of \cite[Lemma
  34]{LSz24}. For the convenience of the reader we include it here
  since the statement is not quite the same. 

  On the balls $B_{e^{\tau/2}(R-C_1)}$ we view $N_\tau$ as the graph
  of $v(x,\tau)$ over $M_\tau$. Using that both satisfy the rescaled
  mean curvature flow equation, the function $v$ satisfies an equation
  of the form 
  \[ \partial_\tau v = \Delta v + |A|^2 v + \frac{1}{2}(v - x\cdot
    \nabla v) + Q(x,\tau, v, \nabla v, \nabla^2 v). \]
  Here $A$ is the second fundamental form of $M_\tau$, and for each
  $x\in M_\tau$, the function $Q(x,\tau, p,q,r)$ is a power series in
  $p,q,r$ with terms that are at least quadratic, but $r$ appears at
  most linearly. The coefficients are controlled
  uniformly in $x,\tau$. Note that since $M_\tau$ is a
  $\delta_0$-graph over $\mathcal{C}$ on the relevant region,
  we can assume that $|A|^2 < 1$.  It follows that once $\delta_0$ is
  sufficiently small, we have the differential inequality
  \[  \left| \partial_\tau v - \Delta v + \frac{1}{2}x\cdot \nabla
      v\right| \leq C(|v| + |\nabla v|), \]
  for a fixed constant $C$. We absorb the gradient term by considering
  the evolution of $|v|^{3/2}$, which for another constant $C$
  satisfies
  \[ \partial_\tau |v|^{3/2} - \Delta |v|^{3/2} + \frac{1}{2} x\cdot
    \nabla |v|^{3/2} \leq C |v|^{3/2}, \]
  in a weak sense. 
  It follows that for a suitable constant $C_2 > 0$ the function
  $f(x,\tau) := e^{-C_2\tau} |v|^{3/2}$ is a subsolution of the drift heat equation
  along $M_\tau$, in the ball $B_{e^{\tau/2}(R-C_1)}$,
  i.e. $\partial_\tau f \leq \Delta f - \frac{1}{2}x\cdot \nabla f$. Note that $f
  \leq \delta^{3/2}$. 

  Next we use that $e^{-\tau} |x|^2$ is also a subsolution of the
  drift heat equation along any rescaled mean curvature flow. Define
  the function $\tilde{f}$ along $M_\tau$, by
  \[ \tilde{f}(x,\tau) = \begin{cases} \delta^{3/2}(e^{-\tau}|x|^2 -
      (R-C_1-1)^2)\, \quad &|x| \geq e^{\tau/2}(R-C_1), \\
      \max\{ f, \delta^{3/2}(e^{-\tau}|x|^2 -
      (R-C_1-1)^2)\, \quad &|x| < e^{\tau/2}(R-C_1). \end{cases} \]
  Note that if $|x| \leq e^{\tau/2}(R-C_1-1)$, then we have $\tilde{f}
  = f$, while for $|x|$ close to $e^{-\tau/2}(R-C_1)$ we have
  $\tilde{f} = \delta^{3/2}(e^{-\tau}|x|^2 -
      (R-C_1-1)^2)$ using that $f \leq \delta^{3/2}$. It follows that
      $\tilde{f}$ is also a subsolution of the drift heat equation
      along $M_\tau$. At the same time, for $\tau\in  [0,2]$ we have
      \[ \label{eq:tildef43}
        \tilde{f}^{4/3} \begin{cases} \leq \delta^2 |x|^{8/3}, \quad
        &|x| \geq e^{\tau/2}(R-C_1), \\
       \leq |v|^2 + \delta^2|x|^{8/3}, &e^{\tau/2}(R-C_1-1) < |x| <
       e^{\tau/2} (R-C_1), \\
       = e^{-4C_2\tau/3} |v|^2, &|x| \leq e^{\tau/2}(R-C_1-1). 
      \end{cases}
    \]
    Using this, and the assumed uniform area ratios, we have
    \[ \int_{M_0} \tilde{f}(x, 0)^{4/3}\, e^{-|x|^2/4}\,
      d\mathcal{H}^2 \leq A,\]
    where
    \[ \label{eq:Adefn} A &= \int_{M_0 \cap B_R} |v(x, 0)|^2 e^{-|x|^2/4}\,
      d\mathcal{H}^2 + \delta^2\int_{M_0 \setminus B_{R-C_1-1}}
      |x|^{8/3} e^{-|x|^2/4}\, d\mathcal{H}^2 \\
      &\leq \int_{M_0 \cap B_R} |v(x, 0)|^2 e^{-|x|^2/4}\,
      d\mathcal{H}^2 + \delta^2 C_{\kappa_3}
      e^{-(R-C_1-1)^2/(4+\kappa_3)}. \]
    The constant $C_{\kappa_3}$ here depends on a choice of $\kappa_3
    > 0$ and the area ratio bounds.

    To obtain \eqref{eq:L2growth} we apply the monotonicity formula,
    which implies that
    \[ \label{eq:62} \int_{M_\tau} \tilde{f}(x, \tau)^{4/3} e^{-|x|^2/4}\,
      d\mathcal{H}^2 \leq \int_{M_0} \tilde{f}(x,0)^{4/3} e^{-|x|^2/4}\,
      d\mathcal{H}^2 \leq A. \]
    From \eqref{eq:tildef43} we get
    \[ \label{eq:61} \int_{M_\tau} \tilde{f}(x, \tau)^{4/3} e^{-|x|^2/4}\,
      d\mathcal{H}^2 \geq e^{-c\tau} \int_{M_\tau \cap B_{R-C_1-1}}
      |v(x,\tau)|^2 e^{-|x|^2/4}\, d\mathcal{H}^2, \]
    and estimating the region outside of the $R-C_1-1$ ball as in
    \eqref{eq:Adefn} we have
    \[e^{-c\tau} \int_{M_\tau \cap B_{R-C_1-1}}
      |v(x,\tau)|^2 e^{-|x|^2/4}\, d\mathcal{H}^2 \geq e^{-c\tau}
      \Vert v\Vert_{L^2(B_R)}^2 - \delta^2 C_{\kappa_3}
      e^{-(R-C_1-1)^2/(4+\kappa_3)}. \]
    Combining this with \eqref{eq:61}, \eqref{eq:62} and
    \eqref{eq:Adefn}, we get the estimate \eqref{eq:L2growth}. 
    
    From Ecker's log-Sobolev inequality~\cite[Theorem 3.4]{Ecker}, we
    have a $q > 1$ such that for all $\tau\in [\frac{1}{2}, 2]$ we
    have
    \[ \int_{M_\tau} \tilde{f}(x, \tau)^{4q/3}\, e^{-|x|^2/4}\,
      d\mathcal{H}^2 \leq CA^q, \]
    for a larger constant $C$. In the notation of \cite{Ecker} we can
    set $p(0)=4/3$, so $p(1/2)=1+e/3 > 1.4 p(0)$, so we can choose
    $q=1.4$. Using the monotonicity formula centered
    at different points as in the proof of \cite[Lemma
    3.5(2)]{LSSz22}, this integral estimate implies the pointwise bound
    \[ \tilde{f}(x, \tau)^{4q/3} \leq CA^q e^{|x|^2/4},  \]
    for $\tau\in [3/4, 2]$. This, together with \eqref{eq:tildef43} and interior
    estimates, in turn implies the required
    pointwise bounds for $\nabla^k v$ for $\tau\in [1,2]$ and $|x| <
    e^{\tau/2}(R-C_1-2)$.

\end{proof}

The final ingredient that we need is the following three annulus type
lemma, similar to the ones used in \cite{LSSz22}, \cite{Ghosh25}. 
\begin{prop}\label{prop:3ann}
  Let $\lambda_1 \not\in \frac{1}{2}\mathbb{Z}$ and $\kappa_3 > 0$. There
  are $\lambda_2 > \lambda_1$ and
  $R_2, \delta_2 > 0$ (depending on $\lambda_1, \kappa_3$) with the 
   following property. Suppose that $M_\tau, N_\tau$ are two rescaled
   mean curvature flows for $\tau\in [0,2]$ that are
   $\delta_2$-graphical over $\mathcal{C}$ on the ball $B_R$ with $R >
   R_2$, and
   satisfy:
   \begin{itemize}
     \item[(a)] On $B_R$, $N_\tau$ is the graph of $v$ over $M_\tau$, where
       $|v|<\delta < \delta_2$,
     \item[(b)] We have $\Vert v(1)\Vert_{B_R} \geq e^{\lambda_1}
       \Vert v(0)\Vert_{B_R}$,
     \item[(c)] We have
       \[ \Vert v(1)\Vert_{B_R} \geq \delta e^{-\frac{R^2}{8 +
             2\kappa_3}}. \]
     \end{itemize}
     Then we have
     \[ \Vert v(2) \Vert_{B_R} \geq e^{\lambda_2} \Vert
       v(1)\Vert_{B_R}. \]
     Here by $\Vert v(\tau)\Vert_{B_R}$ we mean the (Gaussian) $L^2$-norm
     of $v(x, \tau)$ on $M_\tau\cap B_R$. 
\end{prop}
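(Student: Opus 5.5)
We argue by contradiction and compactness, following the proof of Proposition~\ref{prop:L23ann}; the new ingredient is that Proposition~\ref{prop:nonconc} replaces the global nonconcentration estimate. Fix $\lambda_1\notin\frac12\mathbb{Z}$ and $\kappa_3>0$. Suppose there are sequences $\delta_{2,i}\to 0$, $R_i\to\infty$, and flows $M^i_\tau, N^i_\tau$ satisfying (a), (b), (c) with $\delta_i<\delta_{2,i}$. Suppose also that $\Vert v_i(2)\Vert_{B_{R_i}} < e^{\lambda_1+1/i}\Vert v_i(1)\Vert_{B_{R_i}}$, which is what a failure for every $\lambda_2>\lambda_1$ produces. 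Then the $M^i_\tau$ converge to $\mathcal{C}$ smoothly on compact subsets of $[0,2]\times\mathbb{R}^3$. Normalize by $h_i := \Vert v_i(1)\Vert_{B_{R_i}}$ and set $w_i = h_i^{-1}v_i$.

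\textbf{Step 1: uniform bounds on $w_i$.} Apply Proposition~\ref{prop:nonconc} on $[0,2]$, and also on the shifted interval starting at $\tau=1$. Hypothesis (b) gives $\Vert v_i(0)\Vert_{B_{R_i}}\le e^{-\lambda_1}h_i$. Hypothesis (c) gives $\delta_i^2 e^{-(R_i-C_1-1)^2/(4+\kappa_3)}\le h_i^2 e^{R_i^2/(4+\kappa_3)-(R_i-C_1-1)^2/(4+\kappa_3)}$, and this is bad because the exponent is $\sim 2(C_1+1)R_i/(4+\kappa_3)$, which grows.

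So the plan is to apply Proposition~\ref{prop:nonconc} with a smaller parameter $\kappa_3'<\kappa_3$. The error term is then $\delta_i^2 e^{-(R_i-C_1-1)^2/(4+\kappa_3')}$. Compared with $h_i^2\ge \delta_i^2 e^{-R_i^2/(4+\kappa_3)}$, it is $o(h_i^2)$ as $R_i\to\infty$, since $1/(4+\kappa_3')>1/(4+\kappa_3)$ dominates the linear correction. With this, Proposition~\ref{prop:nonconc} gives:
\begin{itemize}
\item[(i)] $\Vert w_i(\tau)\Vert_{L^2(B_{R_i})}\le C$ on $[0,2]$;
\item[(ii)] pointwise bounds $|\nabla^k w_i(x,\tau)|\le C e^{|x|^2/8p_0}$ for $\tau\in[1,2]$, and the same on $[2,2]$ after restarting at $\tau=1$, with $p_0>1$.
\end{itemize}
The weight $e^{|x|^2/8p_0}$ is square-integrable against $e^{-|x|^2/4}$ with room to spare. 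Hence the Gaussian $L^2$ mass of $w_i(\tau)$ outside $B_\rho$, for $\tau\in[1,2]$, tends to zero uniformly in $i$ as $\rho\to\infty$. This is the non-concentration needed.

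\textbf{Step 2: passage to the limit.} Interior parabolic estimates give subconvergence $w_i\to w$ locally smoothly on $(0,2]\times\mathcal{C}$, where $\partial_\tau w=\mathcal{L}_{\mathcal{C}}w$. By Step 1 the $L^2$ norms converge at $\tau=1$ and $\tau=2$, so $\Vert w(1)\Vert=1$ and $\Vert w(2)\Vert\le e^{\lambda_1}$.

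The delicate point is $\tau=0$, where there is no smooth convergence. As in Proposition~\ref{prop:L23ann}, I would instead use the bound from (\ref{eq:L2growth}) at a small time $\tau=\epsilon$: $\Vert w(\epsilon)\Vert\le e^{c\epsilon/2}e^{-\lambda_1}+o(1)$, with $w$ well defined there. Expand $w=\sum_j a_je^{\mu_j\tau}\phi_j$ in eigenfunctions of $\mathcal{L}_{\mathcal{C}}$, with $\mu_j\in\frac12\mathbb{Z}$. The conditions $\Vert w(1)\Vert\ge e^{\lambda_1(1-\epsilon)}e^{-c\epsilon/2}\Vert w(\epsilon)\Vert$ and $\Vert w(2)\Vert\le e^{\lambda_1}\Vert w(1)\Vert$ then contradict each other, by log-convexity of $\tau\mapsto\Vert w(\tau)\Vert^2$ and the spectral gap around $\lambda_1$. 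The mass above $\lambda_1$ must grow at rate at least $\min\{\mu_j>\lambda_1\}$, and the mass below decays. Letting $\epsilon\to0$ gives the contradiction.

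\textbf{Main obstacle.} The main obstacle is the exponent bookkeeping in Step 1. Hypothesis (c) only barely beats the error term in Proposition~\ref{prop:nonconc}: the $-C_1-1$ shift in the radius costs $e^{O(R)}$, and this must be absorbed by choosing the proposition's $\kappa_3$ strictly smaller than the given one and then taking $R_2$ large. We must also check that the pointwise tail bound with $p_0=1.4$ controls the region $B_{R_i}\setminus B_\rho$ uniformly, including the part beyond $e^{\tau/2}(R_i-C_1-2)$. That part is handled again by the $\delta_i$ bound and (c), with the same choice of exponents.
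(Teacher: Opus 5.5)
Your proposal is essentially the paper's proof: contradiction and compactness normalized by $\Vert v(1)\Vert_{B_R}$, Proposition~\ref{prop:nonconc} applied with a smaller $\kappa_3$ so that (c) absorbs the boundary error, the small-time bound \eqref{eq:L2growth} in place of convergence at $\tau=0$, the weighted pointwise bound to force $\Vert w(1)\Vert=1$, and a contradiction with $\lambda_1\notin\frac12\mathbb{Z}$, which you obtain from the eigenfunction expansion where the paper cites a result of Colding--Minicozzi. (Your last step is closed by the equality case of $F(1)^2\le F(0^+)F(2)$ for $F=\Vert w\Vert^2$, which forces a single mode of eigenvalue $\lambda_1$; the cruder split into modes above and below $\lambda_1$ does not by itself give the contradiction.) One caveat, shared with the paper: Proposition~\ref{prop:nonconc} needs $|v|<\delta$ on the expanding balls $B_{e^{\tau/2}(R-C_1)}$, while (a) only gives this on $B_R$, so to run Step 1 honestly you must restart from radius about $e^{-s/2}R+C_1$ over short time steps $s$; the real bookkeeping constraint is this factor $e^{-s/2}$, which the slack $\kappa_3'<\kappa_3$ absorbs only because $s$ is small, rather than the $-C_1-1$ shift.
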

\begin{proof}
  Fix $\lambda_1, \kappa_3$ as in the statement, and
  suppose that no suitable $\lambda_2, R_2, \delta_2$ exist. Then we can
  find sequences $M^k_\tau, N^k_\tau$ of rescaled flows for $\tau\in
  [0,2]$, that are $k^{-1}$-graphical over $\mathcal{C}$ on $B_{R_k}$
  for some $R_k > k$, and which satisfy the conditions (a), (b), (c)
  with $\delta = \delta_k = k^{-1}$, but do not satisfy the desired conclusion
  with $\lambda_2 = \lambda_1 + k^{-1}$,
  i.e. we have
  \[ \Vert v_k(2)\Vert_{B_{R_k}} < e^{\lambda_1 + k^{-1}} \Vert
    v_k(1)\Vert_{B_{R_k}}. \]
  Write $N^k_\tau$ as the graph of $v_k$ over
  $M^k_\tau$ on $B_{R_k}$, and let us define
  \[ d_k = \Vert v_k(1)\Vert_{B_{R_k}}. \]
  By assumption we have $\Vert v_k(0)\Vert_{B_{R_k}}\leq
  e^{-\lambda_1}d_k$. It follows that the normalized functions
  $d_k^{-1}v_k$ converge smoothly on compact subsets of $(0,2] \times
  \mathbb{R}^3$ to a solution $v_\infty$ of the linearized equation on
  $\mathcal{C}$, and $\Vert v_\infty(2)\Vert_{\mathcal{C}}\leq
  e^{\lambda_1}$. We claim that $\limsup_{\tau\to 0} \Vert v_\infty(\tau)\Vert_{\mathcal{C}}
  \leq e^{-\lambda_1}$, and  $\Vert v_\infty(1)\Vert_{\mathcal{C}}
  = 1$. Using \cite[Theorem 0.6]{CM21} this will contradict that there are no eigenfunctions of
  $\mathcal{L}_{\mathcal{C}}$ with eigenvalue $\lambda_1$. 

  To control $\limsup_{\tau\to 0} \Vert
  v_\infty(\tau)\Vert_{\mathcal{C}}$ we bound $\Vert v_k(\tau)
  \Vert_{B_{R_k}}$ for small $\tau$. We apply the estimate
  \eqref{eq:L2growth}, with $\kappa_3/2$ instead of $\kappa_3$, to get
  \[ \Vert v_k(\tau)\Vert_{B_{R_k}} &\leq e^{c\tau/2} \Vert
    v_k(0)\Vert_{B_{R_k}} + C_{\kappa_3/2} \delta
    e^{-(R_k-C_1-1)^2/(8+\kappa_3)} \\
    &\leq e^{c\tau/2} e^{-\lambda_1}d_k + C_{\kappa_3/2} d_k
    e^{\frac{R_k^2}{8+2\kappa_3} - \frac{(R_k-C_1-1)^2}{8+\kappa_3}}.
  \]
  Once $R_k > k$ are sufficiently large (depending on $\kappa_3$), we will
  have
  \[ \Vert d_k^{-1}v_k(\tau)\Vert_{B_{R_k}}  \leq e^{c\tau/2} e^{-\lambda_1} +
    k^{-1}. \]
  Letting $k\to\infty$, and then $\tau \to 0$ we get $\limsup_{\tau\to 0} \Vert
  v_\infty(\tau)\Vert_{\mathcal{C}} \leq e^{-\lambda_1}$. 

  We next show that $\Vert v_\infty(1)\Vert_{\mathcal{C}}
  = 1$.  Let $\epsilon > 0$. We apply Proposition~\ref{prop:nonconc} to the
  $v_k$, with $R$ chosen so that $e^{1/2}(R-C_1-2) = R_k$ (which we
  can assume is greater than $R$). Then the
  conclusion, for sufficiently large $k$, is that on $B_{R_k}$ we have
  \[ |v_k(1,x)| &\leq C \Big(e^{-\lambda_1}d_k +  \delta_k
    e^{-R_k^2/2(4+\kappa_3)}\Big) e^{|x|^2/8p_0} \\
    &\leq 2Cd_k e^{|x|^2 / 8p_0}. \]
  In particular this implies that we have a bound $\Vert
  v_k(1)\Vert_{L^p(B_k)} < Cd_k$ for $2 < p < 2p_0$, for a larger
  constant $C$.  
  Using H\"older's inequality, for any $\epsilon > 0$ we can find a
  compact set $K\subset \mathbb{R}^3$ (independent of $k$) such that 
  \[ \int_{B_{R_k}\setminus K} |v_k(1,x)|^2 e^{-|x|^2/4} \leq
    \epsilon d_k2. \]
  Using the smooth convergence of $d_k^{-1}v_k(1)$ to $v_\infty(1)$ on
  compact sets, it follows from this, letting $\epsilon\to 0$ as well,
  that
  \[ \Vert v_\infty(1)\Vert_{\mathcal{C}} = \lim_{k\to \infty} \Vert
    d_k^{-1} v_k(1)\Vert_{B_{R_k}} = 1. \]
  This implies the desired contradiction. 
\end{proof}

\section{Perturbations of a degenerate cylindrical singularity}
In this section we will prove a local perturbation result. Supposing
that $L_t$ has a degenerate $\mathcal{C}$-singularity at $(0,0)$, we
will consider suitable assumptions under which a perturbed flow $L_t'$
has no $\mathcal{C}$ singularities close to the origin along the
$y$-axis. For now the size of the region where we rule out
singularities will go to zero as we consider smaller and smaller
perturbations. The result will be used in the following section along
a family of such perturbed flows to perturb away singularities in a
region of a definite size.

Suppose that $L_t$ has a degenerate $\mathcal{C}$-singularity at
$(0,0)$, and let us write $M_\tau$ for the corresponding rescaled mean curvature flow
centered at $(0,0)$. We will assume that $M_0$ is a
$\delta$-graph over $\mathcal{C}$ on $B_{\delta^{-1}}$, for small
$\delta$. From
Proposition~\ref{prop:degendecay} we 
know that the $L^2$-distance from $M_\tau$ to $\mathcal{C}$ is
bounded by $\eta_0 e^{-\left(\frac{1}{2} - \kappa_1\right)\tau}$. By
choosing $\delta$ above smaller, we can take $\eta_0, \kappa_1 > 0$ as
small as we like.
We first show the following graphicality property of
$M_\tau$ over $\mathcal{C}$. In \cite{SWX25_2} there are stronger
estimates, however they are asymptotics as $\tau\to \infty$, while in
our argument we will 
need estimates valid for all $\tau$, in terms of how close $M_0$ is to
the cylinder.
\begin{prop}\label{prop:graphicality}
  Suppose that the rescaled flow $M_\tau$ has a degenerate
  $\mathcal{C}$-singularity at infinity. 
  Let $R_0, \delta_2> 0$. For $\delta, \kappa_2 > 0$ 
  sufficiently small (depending on $R_0, \delta_2 > 0$), we
  have the following.  Let us define
  \[ \label{eq:Rtaudefn} R(\tau) = (2 +
    \kappa_2)\sqrt{\tau + R_0}, \]
  and suppose that $M_0$ is a
  $\delta$-graph over $\mathcal{C}$ on $B_{\delta^{-1}}$. Then for all
  $\tau \geq 0$ we have that $M_\tau$ is $\delta_2$-graphical over
  $\mathcal{C}$ on the ball $B_{R(\tau)+10}$. 
\end{prop}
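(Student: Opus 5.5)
The plan is to propagate graphicality from a fixed ball to the growing balls $B_{R(\tau)+10}$ by combining the $L^2$ decay of Proposition~\ref{prop:degendecay} with the pointwise non-concentration estimate of Proposition~\ref{prop:nonconc}, applied with $N_\tau=\mathcal{C}$ (a static rescaled flow). The constant $2+\kappa_2>2$ is chosen to beat the Gaussian weight: a pointwise bound of the form $\eta_0 e^{-(\frac12-\kappa_1)\tau} e^{|x|^2/8p_0}$ stays small on $|x|\le R(\tau)+10$ exactly when
\[ \frac{(2+\kappa_2)^2(\tau+R_0)}{8p_0} < \Big(\frac12-\kappa_1\Big)\tau - C. \]
With $p_0=1.4$ we have $4/(8p_0)=1/2.8<1/2$, so there is room for small $\kappa_1,\kappa_2$, with $R_0$ absorbed by taking $\eta_0$ (hence $\delta$) small.

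\textbf{Step 1.} For $\tau\in[0,T_0]$ with $T_0$ fixed, I will use Proposition~\ref{prop:pseudo} iterated finitely many times, or Colding--Minicozzi stability \cite{CM15_1}. Choosing $\delta$ small gives $\delta_2$-graphicality on $B_{R(T_0)+20}$ for these times.

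\textbf{Step 2.} I will argue by continuity (or induction on unit time intervals). Suppose $M_s$ is $\delta_0$-graphical over $\mathcal{C}$ on $B_{R(s)+10}$ for $s\le\tau_1$, where $\delta_0\le\delta_2$ is the small constant of Proposition~\ref{prop:nonconc}. Apply Proposition~\ref{prop:nonconc} on $[\tau_1-1,\tau_1+1]$ with $R=R(\tau_1-1)+10$, with $N=\mathcal{C}$ and $M$ the flow. The required comparison graph is of $\mathcal{C}$ over $M_\tau$, equivalently of $M_\tau$ over $\mathcal{C}$; the roles are symmetric up to harmless constants. This gives, for $|x|<e^{1/2}(R-C_1-2)$ at times in $[\tau_1,\tau_1+1]$,
\[ |\nabla^k v|\le C\big(\mathbf{d}_{\mathcal{C}}(M_{\tau_1-1})+\delta_0 e^{-(R-C_1-1)^2/(8+2\kappa_3)}\big)e^{|x|^2/8p_0}. \]
On the ball we have $\overline{\mathrm{dist}}\approx|v|$, so the $L^2$ quantity is controlled by $\eta_0e^{-(\frac12-\kappa_1)\tau}$ via Proposition~\ref{prop:degendecay}.

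Since $e^{1/2}(R(\tau_1-1)-C_1-2)\gg R(\tau_1+1)+10$ for large $\tau$, the new region covers the next ball. The first term is small there by the exponent inequality above. For the second term, the error is bounded by
\[ \delta_0 e^{-R^2/(8+2\kappa_3)}e^{R'^2/8p_0}, \]
where $R'\approx R$. Choosing $\kappa_3$ small makes $1/(8+2\kappa_3)>1/8p_0$, so this is $\le\delta_0 e^{-cR^2}$, which is small once $R_0$ is large. If $R_0$ is prescribed, I will instead treat small $\tau$ by Step 1 with a larger $T_0$. The resulting bound on $|v|+|\nabla v|+|\nabla^2 v|$ is much less than $\delta_0$, which closes the continuity argument and gives $\delta_2$-graphicality.

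\textbf{Main obstacle.} The delicate point is the bookkeeping of the self-improvement: the errors must stay genuinely below $\delta_0$ uniformly in $\tau$, not just bounded. The Gaussian exponents must be compared carefully: $(2+\kappa_2)^2/8p_0$ must stay below $\frac12-\kappa_1$, and the outer error from $e^{-R^2/(8+2\kappa_3)}$ must beat $e^{R^2/8p_0}$. Both rely on $p_0>1$ with margin, and this fixes the order of choices: first $\kappa_3$, then $\kappa_2$ and $\kappa_1$, then $\delta$.
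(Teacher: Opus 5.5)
Your proposal is correct and follows essentially the same route as the paper: you handle a bounded initial time range directly, then induct on unit intervals by applying Proposition~\ref{prop:nonconc} to $M_\tau$ against the static cylinder, bound the $L^2$ term via Proposition~\ref{prop:degendecay}, and compare exponents using $(2+\kappa_2)^2/(8p_0)<\frac12-\kappa_1$ and $8+2\kappa_3<8p_0$. The only bookkeeping fix is that your inductive threshold should be the input constant $\delta_1$ of Proposition~\ref{prop:nonconc}, taking $\delta_2\le\delta_1$ without loss of generality as the paper does, rather than $\delta_0$; this is harmless because your output bound can be made arbitrarily small.
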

\begin{proof}
  Note first that for any given $T_0 > 0$,
  with suitable choices of $\delta, \kappa_2$ the conclusion
  holds for $\tau\in [0,T_0]$.  We will show next that if the conclusion holds
  for times $\tau \leq T-1$ with sufficiently large $T$,
  then it also holds at $\tau \in [T,T+1]$. 
  We apply Proposition~\ref{prop:nonconc}, for the flow $M_\tau$
  translated in time  by $T-1$, viewed as a graph over
  $\mathcal{C}$. We can assume that $\delta_2 < \delta_1$ for the
  $\delta_1$ in Proposition~\ref{prop:nonconc}. Then the conclusion is
  that for $\tau\in [T, T+1]$ we have 
  \[ \label{eq:10} |\nabla^k v(x, \tau)|
    \leq C_{\kappa_3}\left( \Vert v(T-1)\Vert_{B_{R(T-1)}}^2
      + \delta_0^2 e^{-(R(T-1)-C_1-1)^2/(4+\kappa_3)}\right)^{1/2}
    e^{|x|^2/8p_0}, \]
  for $|x| < e^{1/2}(R(T-1) - C_1-2)$. 
  We choose $\kappa_3$ small enough so that $4+\kappa_3 < 4p_0$. From
  this it follows that
  \[ e^{-(R(T-1)-C_1)^2/(4+\kappa_3)} e^{R(T+1)^2 / 4p_0} \ll 1, \]
  as long as $T$ is sufficiently large, since the coefficient of $T$
  in the exponent is
  \[ -\frac{(2+\kappa_2)^2}{4+\kappa_3} + \frac{(2+\kappa_2)^2}{4p_0}
    < 0.\]
  In particular we can ensure
  that
  \[ C_{\kappa_3}^2\delta_0^2  e^{-(R(T-1)-C_1)^2/(4+\kappa_3)} e^{R(T+1)^2 / 4p_0} <
    \frac{1}{2} \delta_2^2. \]

  At the same time we have $\Vert v(T-1)\Vert \leq \eta_0
  e^{-(\frac{1}{2}-\kappa_1)(T-1)}$, so
  \[ \Vert v(T-1)\Vert_{B_{R(T-1)}}^2 e^{R(T+1)^2/4p_0} \leq \eta_0^2
  e^{-(1-2\kappa_1)(T-1) + \frac{1}{4p_0}(2+\kappa_2)^2(T+1
    + R_0)}. \]
If $\kappa_1,\kappa_2$ are sufficiently small, then
  given any $R_0,
  \delta_2$, we can arrange that once $T$ is sufficiently large, we
  have
  \[ C_{\kappa_3}^2\Vert v(T-1)\Vert_{B_{R(T-1)}}^2 e^{R(T+1)^2/4p_0} <
    \frac{1}{2}\delta_2^2. \]
  It follows from \eqref{eq:10} that for $\tau\in [T, T+1]$, the
surface $M_T$ is $\delta_2$-graphical over $\mathcal{C}$ on the ball $B_{R(\tau)}$.
\end{proof}

The following is our key local perturbation result. The main
point is that the $y$-interval in which we rule out cylindrical
singularities is of larger order than the size of the initial
perturbation. 
\begin{prop}\label{prop:rescaledperturb}
  Suppose that $M_\tau$ has a degenerate $\mathcal{C}$-singularity at infinity, and
  for sufficiently small $\delta > 0$ we have that $M_0$ is a
  $\delta$-graph over $\mathcal{C}$ on the ball
  $B_{\delta^{-1}}$. Suppose that $M'_\tau$ is another rescaled flow,
  and $M'_0$ is also a $\delta$-graph over $\mathcal{C}$ on
  $B_{\delta^{-1}}$. In addition assume that:
  \begin{itemize}
 \item As long as $R_0R(\tau)\epsilon e^\tau < \frac{1}{100}$ we have that
   $M'_\tau$ is the graph of $v(x,\tau)$ over $M_\tau$ on
   $B_{R(\tau)}$ for $R(\tau)$ defined by \eqref{eq:Rtaudefn}, and 
    $|v(x,\tau)| \leq R_0R(\tau)\epsilon e^\tau$ on $B_{R(\tau)}$. 
  \item $\Vert v(x,0)\Vert_{L^2(B_{R(0)})} \geq c_1 \epsilon$, for
    some $c_1 > 0$,
  \item For some $\kappa_4 > 0$ we have
    \[ \Vert v(x, 1)\Vert_{L^2(B_{R(1)})} \geq
      e^{\frac{1}{2}-\kappa_4} \Vert v(x,0)\Vert_{L^2(B_{R(0)})}, \]
  \end{itemize}
  If $\delta, R_0^{-1}, \kappa_4$ are sufficiently small,
  depending on $c_1$, then the
  mean curvature flow $L'_t$ corresponding to $M'_\tau$ does not have
  a cylindrical singularity at $(y, 0,0,0)$ for $|y| <
  \epsilon^{2/3}$. 
\end{prop}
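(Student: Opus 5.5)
Argue by contradiction. Suppose $L'_t$ has a cylindrical singularity at $(y_0,0,0,0)$ with $|y_0|<\epsilon^{2/3}$. Let $\tilde M_\tau$ be the rescaled flow of $L'_t$ centered at $(y_0,0,0,0)$. Then
\[ \tilde M_\tau = M'_\tau - e^{\tau/2}(y_0,0,0), \]
and $\tilde M_\tau$ has a cylindrical singularity at infinity. The plan is to exhibit growth of $\mathbf{d}_{\mathcal{C}}(\tilde M_\tau)$ at rate close to $e^{\tau/2}$ over a long time window. For a suitable small $\kappa$ this contradicts Proposition~\ref{prop:noC}, which forces $\mathbf{d}_{\mathcal{C}}(\tilde M_{\tau+L_0})\le e^{\kappa L_0}\mathbf{d}_{\mathcal{C}}(\tilde M_\tau)$ once $\tilde M_{\tau}$ is $\delta$-close to $\mathcal{C}$ on a large ball.

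\emph{Step 1: iterate the three annulus lemma.} Apply Proposition~\ref{prop:3ann} to the pair $M_\tau, M'_\tau$ on unit time intervals $[k,k+2]$ with $R=R(k)$ and $\lambda_1=\frac12-\kappa_4$. Proposition~\ref{prop:graphicality} gives the graphicality of $M_\tau$. Hypothesis (c) of Proposition~\ref{prop:3ann} must be checked at each step, namely
\[ \Vert v(k+1)\Vert\ge \delta_k e^{-R(k)^2/(8+2\kappa_3)}, \qquad \delta_k = R_0R(k)\epsilon e^{k}. \]
Inductively $\Vert v(k)\Vert\gtrsim c_1\epsilon e^{(\frac12-\kappa_4)k}$, while
\[ e^{-R(k)^2/(8+2\kappa_3)}\approx e^{-(4+4\kappa_2)(k+R_0)/(8+2\kappa_3)}, \]
which decays like $e^{-(\frac12+\kappa')k}$ with $\kappa'>0$ if $\kappa_2$ is small compared to $\kappa_3$. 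So (c) holds provided the polynomial factor $R(k)$ and the constant $R_0$ in the exponent are absorbed, using $R_0$ large. The growth rate $\lambda_2$ from each step feeds the next $\lambda_1$. Since $\frac12-\kappa_4$ lies just below $\frac12$ and is not in $\frac12\mathbb{Z}$, the rate stays in $(\frac12-\kappa_4,\frac12)$. We get
\[ \Vert v(\tau)\Vert\ge c_1\epsilon e^{(\frac12-\kappa_4)\tau} \]
as long as the first hypothesis applies, that is up to $\tau_1$ with $R_0R(\tau_1)\epsilon e^{\tau_1}\approx 1/100$. In particular at $\tau_0$ with $e^{\tau_0}=\epsilon^{-1}\cdot(\text{small, log-corrected})$ we have $\Vert v(\tau_0)\Vert\gtrsim \epsilon^{1/2+\kappa_4}$, up to logs.

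\emph{Step 2: pass from $v$ to the distance of $\tilde M$ from $\mathcal{C}$.} At time $\tau_0$, $M_{\tau_0}$ is within $\eta_0 e^{-(\frac12-\kappa_1)\tau_0}\approx\eta_0\epsilon^{1/2-\kappa_1}$ of $\mathcal{C}$. The translation by $e^{\tau_0/2}y_0$ is of size at most $\epsilon^{2/3-1/2}=\epsilon^{1/6}$, which is small, and $\mathcal{C}$ is invariant under $y$-translation. So $\tilde M_{\tau_0}$ is the translate of the graph of $v+v_M$ over $\mathcal{C}$. Here the translation changes $v_M$, the graph function of $M$, only by $O(\epsilon^{1/6}\,\Vert\partial_y v_M\Vert)$, and $v$ by $O(\epsilon^{1/6}\Vert v\Vert)$, using the gradient estimates of Proposition~\ref{prop:nonconc}. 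Hence
\[ \mathbf{d}_{\mathcal{C}}(\tilde M_{\tau_0})\ge \tfrac12\Vert v(\tau_0)\Vert - \eta_0\epsilon^{1/2-\kappa_1}, \]
which dominates once $\eta_0$ is small relative to $c_1$ and $\kappa_1,\kappa_4$ are small. The same comparison at an earlier time $\tau_0-L_0$ gives
\[ \mathbf{d}_{\mathcal{C}}(\tilde M_{\tau_0-L_0})\lesssim \Vert v(\tau_0-L_0)\Vert+\eta_0\epsilon^{1/2-\kappa_1}. \]
I will arrange the times so that both $v$-terms are in the growth regime. This requires an upper bound $\Vert v(\tau_0-L_0)\Vert\le e^{-(\frac12-2\kappa_4)L_0}\Vert v(\tau_0)\Vert$, which also follows from the three annulus iteration read as the growth ratio. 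The result is
\[ \mathbf{d}_{\mathcal{C}}(\tilde M_{\tau_0})\ge e^{L_0/4}\,\mathbf{d}_{\mathcal{C}}(\tilde M_{\tau_0-L_0}). \]

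\emph{Step 3: conclude.} Choose $\kappa=1/8$ in Proposition~\ref{prop:noC}. Check that $\tilde M_{\tau_0-L_0}$ is $\delta$-graphical over $\mathcal{C}$ on $B_{\delta^{-1}}$; this holds since $M$ is graphical there, $|v|\le 1/100$ scaled down, and the translation is $\epsilon^{1/6}$-small. Proposition~\ref{prop:noC}, applied from that time onward, contradicts the growth in Step 2.

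\emph{Main obstacle.} The hardest step is Step 2: ensuring the window $[\tau_0-L_0,\tau_0]$ fits before the control time $\tau_1$, while $\epsilon^{1/2+\kappa_4}$ still beats both $\eta_0\epsilon^{1/2-\kappa_1}$ and the translation error. As stated this seems to need $\kappa_4+\kappa_1$ small and $\eta_0\ll c_1$. Since $\eta_0$ can be made small by taking $\delta$ small (Proposition~\ref{prop:degendecay}), I would first try to absorb everything using $\eta_0\to0$ with $\kappa$'s fixed and $\tau_0=\tau_1-C$. Controlling $v(\tau_0-L_0)$ from above may instead need the rough bound $|v|\le R_0R\epsilon e^{\tau}$ directly, which gives an upper growth of $e^{\tau}$ rather than $e^{\tau/2}$. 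If so, I would compare against Proposition~\ref{prop:noC} with rate $\kappa$ versus lower growth $\frac12-\kappa_4$, taking $L_0$ fixed first and then $\epsilon$-dependent quantities.
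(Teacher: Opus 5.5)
\textbf{Verdict: genuine gap.} The gap is the one you flag under ``Main obstacle'', and the remedy you propose does not close it.

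\textbf{Why Step 2 fails.} You need $\Vert v\Vert$ to dominate $\mathbf{d}_{\mathcal{C}}(M_\tau)$ at both ends of the window. With the window ending at $\tau_0\approx\tau_1$ (where $R_0R(\tau_1)\epsilon e^{\tau_1}=1/100$), the only available bounds are, up to logarithmic factors:
\begin{itemize}
\item the lower bound $\Vert v(\tau_0)\Vert\gtrsim c_1\epsilon^{1/2+\kappa_4}$;
\item the upper bound $\mathbf{d}_{\mathcal{C}}(M_{\tau_0})\le\eta_0\epsilon^{1/2-\kappa_1}$ from Proposition~\ref{prop:degendecay}.
\end{itemize}
Their ratio is about $(c_1/\eta_0)\epsilon^{\kappa_1+\kappa_4}$, which tends to $0$ as $\epsilon\to0$. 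So the inequality $\mathbf{d}_{\mathcal{C}}(\tilde M_{\tau_0})\ge\frac12\Vert v(\tau_0)\Vert-\eta_0\epsilon^{1/2-\kappa_1}$ yields nothing, and the same happens at $\tau_0-L_0$.

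This is not merely an artifact of the $\kappa$-losses. Suppose $v$ grew exactly like $c_1\epsilon e^{\tau/2}$ and $M_\tau$ decayed exactly like $\eta_0e^{-\tau/2}$. At $\tau_1$ you would still be comparing $c_1\epsilon^{1/2}(100R_0R(\tau_1))^{-1/2}$ with $\eta_0\epsilon^{1/2}$, and $R(\tau_1)\sim2\sqrt{\log(1/\epsilon)}\to\infty$.

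Letting $\eta_0\to0$ is not an option. The constant $\eta_0$ is determined by $\delta$, which must be independent of $\epsilon$. In Proposition~\ref{prop:localperturb} the statement is applied with $\epsilon\sim|a-a'|+|\Phi-\Phi'|\to0$ and $\delta_0$ fixed, and the measure-zero covering argument needs the conclusion for all small $\epsilon$.

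\textbf{What is missing.} The growth window must sit \emph{after} the time $\tau_1$, beyond which the first hypothesis gives no control of $M'_\tau$ over $M_\tau$. Your proposal has no mechanism for this. The missing idea is to replace the pointwise hypothesis by an $L^2$-smallness condition:
\begin{itemize}
\item Run the three-annulus iteration only while $D(\tau)\le\eta_0^{1/2}e^{-(\frac12-\kappa_1)\tau}$.
\item Use this bound together with Proposition~\ref{prop:nonconc}, as in the proof of Proposition~\ref{prop:graphicality} and exploiting the growth of $R(\tau)=(2+\kappa_2)\sqrt{\tau+R_0}$, to propagate $\delta_1$-graphicality of $M'_\tau$ over $M_\tau$ on $B_{R(\tau)}$.
\item In Proposition~\ref{prop:3ann}, take $\delta=\min\{\epsilon R_0R(T+2)e^{T+2},\delta_1\}$. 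Hypothesis (a) then uses $\delta_1$ and hypothesis (c) uses the $\epsilon$-term. This works even after the pointwise bound has expired.
\item Stop at the first $T^\epsilon$ at which the smallness fails at $T^\epsilon+L_0$. On $[T^\epsilon+L_0,T^\epsilon+2L_0]$, $D$ then exceeds the distance of $M_\tau$ to $\mathcal{C}$ by a factor $\eta_0^{-1/2}$.
\item Combining with $D\ge c_1\epsilon e^{(\frac12-\kappa_4)\tau}$ gives $e^{(1-\kappa_1-\kappa_4)T^\epsilon}\le c_1^{-1}\eta_0^{1/2}\epsilon^{-1}$.
\end{itemize}
This $T^\epsilon$ may well exceed $\tau_1$. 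The bound on it is exactly why translations of size $\epsilon^{2/3}$, rather than $\epsilon^{1/2}$, are still admissible.

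\textbf{A smaller point.} When verifying (c) you need $\kappa_3,\kappa_4\ll\kappa_2$, so that $(2+\kappa_2)^2/(8+2\kappa_3)>\frac12+\kappa_4$. You wrote the reverse, that $\kappa_2$ should be small compared to $\kappa_3$.
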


\begin{proof}
  In the proof we will choose several small parameters, $1 \gg \kappa_1 \gg
  \kappa_2 \gg \kappa_3 \gg \kappa_4$. In fact we can choose
  $\kappa_i = \kappa^i$ for a sufficiently small $\kappa > 0$. The choice of $\delta, R_0$
  will depend on the choice of the $\kappa_i$.  
  Recall that we set
  \[ R(\tau) = (2 + \kappa_2) \sqrt{\tau + R_0}. \]
  We define
  \[ D(\tau) = \Vert v(x,\tau)\Vert_{B_{R(\tau)}} := \left(\int_{M_\tau
        \cap B_{R(\tau)}} |v(x, \tau)|^2 \, e^{-|x|^2/4}\,
      d\mathcal{H}^2\right)^{1/2}, \]
  so that   $D(1) \geq e^{\frac{1}{2}-\kappa_4}D(0)$.
  By assumption we know that as long as
  $\epsilon R_0 R(\tau) e^\tau< 100^{-1}$, we have $|v(x,\tau)| < \epsilon
  R_0 R(\tau) e^\tau$ on the ball $B_{R(\tau)}$.

  Recall from Proposition~\ref{prop:degendecay}
  that the $L^2$-distance of $M_\tau$ from $\mathcal{C}$ is at
  most $\eta_0 e^{-(\frac{1}{2}-\kappa_1)\tau}$, and by choosing
  $\delta$ small, we can arrange $\eta_0, \kappa_1$ to be as small as
  we like. Our goal is to show
  that $D(\tau)$ grows at a rate very close to $e^{\tau/2}$, until it
  dominates the $L^2$-distance from $M_\tau$ to $\mathcal{C}$. We
  choose a small $\kappa_4 > 0$, and we will apply
  Proposition~\ref{prop:3ann} with $\frac{1}{2} - \kappa_4 < \lambda_1
  < \lambda_2 < \frac{1}{2}$. For convenience we define $\lambda_{1.5}
  = \frac{1}{2}(\lambda_1 + \lambda_2)$.

  Let $L_0 > 0$ be chosen according to Proposition~\ref{prop:noC},
     with $\kappa = \frac{1}{4}$ (so $L_0$ does not depend on the choice of the
     $\kappa_i$).  Consider the following conditions for some $T \geq 1$ for
  $\delta_1$ chosen sufficiently small to apply
  Proposition~\ref{prop:nonconc} and Proposition~\ref{prop:3ann}. 
  \begin{itemize}
  \item[(i)$_T$] For all $\tau \leq T+1$ the surface $M'_\tau$ is
    $\delta_1$-graphical over $M_\tau$ on the ball $B_{R(\tau)}$,
  \item[(ii)$_T$] $D(T+1) \geq e^{\lambda_{1.5}} D(T)$,
    \item[(iii)$_T$] $D(T) \geq  c_1\epsilon
      e^{(\frac{1}{2}-\kappa_4)T}$.
      \item[(iv)$_T$] $D(T) \leq \eta_0^{1/2}
        e^{-(\frac{1}{2}-\kappa_1)T}$,
      \end{itemize}

      \bigskip
      \noindent{\bf Claim 1.} We claim that if $R_0^{-1}, \eta_0$ are
      chosen sufficiently small, depending on $L_0$, then (i)$_T$ and
      (iv)$_T$ imply (i)$_{T+2L_0}$. To see this first note that for
      any given $\delta_3 > 0$,  if we choose $R_0^{-1}, \eta_0$
      sufficiently small, then by an
      argument very similar to the proof of
      Proposition~\ref{prop:graphicality}, (i)$_T$ and (iv)$_T$ imply
      that $M'_\tau$ is $\delta_3$-graphical on the ball $B_{R(\tau)}$
      for $\tau\in [T+1, T+2]$. Choosing $\delta_3$ sufficiently
      small, depending on $L_0$, we can then apply
      Proposition~\ref{prop:pseudo} repeatedly to ensure that
      (i)$_{T+2L_0}$ holds.

      \bigskip
      \noindent{\bf Claim 2.} Next we claim that if (i)$_T$, (ii)$_T$
      (iii)$_T$ hold, then so do (ii)$_{T+1}$, (iii)$_{T+1}$. Since
      (iii)$_{T+1}$ is an immediate consequence of (ii)$_T$,(iii)$_T$,
      it is enough to show (ii)$_{T+1}$. For this we use
      Proposition~\ref{prop:3ann}. We use $R=R(T)$, and the flows
      $M_\tau, M_\tau'$ on the time interval $[T,T+2]$. For the
      graphicality estimate in hypothesis (a) we can use $\delta
      =\min\{\epsilon R_0R(\tau) e^{T+2}, \delta_1\}$.  Since
      Proposition~\ref{prop:3ann} deals with the $L^2$-norms on the
      fixed ball $B_R$ rather than the balls $B_{R(\tau)}$, we next
      estimate the contribution of the region $B_{R(\tau)}\setminus
      B_R$ to the $L^2$-norm of $v(x,\tau)$ for $\tau\in [T, T+2]$. By
      the estimate for $v$, this contribution can be bounded by
      \[ C_{\kappa_3} \epsilon R_0 R(T+2) e^{T+2}e^{-\frac{R(T)^2}{8 + 2\kappa_3}} \]
      for any $\kappa_3 > 0$. We claim that for suitable choices of
      $\kappa_2,\kappa_3, \kappa_4, R_0$ this is of lower order
      than the lower bound from (iii)$_T$, i.e. we claim that
      \[ C_{\kappa_3} \epsilon R_0 R(T+2) e^{T+2}e^{-\frac{R(T)^2}{8 +
            2\kappa_3}} \ll c_1\epsilon e^{(\frac{1}{2}-\kappa_4)T}. \]
      Equivalently, we need
      \[ c_1^{-1}C_{\kappa_3} R_0\sqrt{R_0+T+2} \exp\left(\Big(1 -
          \frac{(2+\kappa_2)^2}{8+2\kappa_3} -
          \frac{1}{2}+\kappa_4\Big)T + 2 -
          \frac{(2+\kappa_2)^2}{8+2\kappa_3} R_0\right) \ll 1. \]
      If we choose $\kappa_i=\kappa^i$ for sufficiently small $\kappa
      > 0$, then the
      coefficient of $T$ in the exponential is negative (the leading
      term is $-\kappa_2/2$), and then we can choose $R_0$
      sufficiently large (depending also on $c_1, C_{\kappa_3}$) to make
      the expression as small as desired. Using this, and that
      $\lambda_{1.5} > \lambda_1$, the condition
      (ii)$_T$ implies hypothesis (b) in Proposition~\ref{prop:3ann}.

      Similarly, we can verify hypothesis (c), using our choice
      $\delta \leq       \epsilon R_0 R(T+2) e^{T+2}$. By (iii)$_{T+1}$, and the
      discussion above, we have the lower bound
      \[ \Vert v(T+1)\Vert_{B_R} \geq \frac{1}{2} c_1\epsilon
        e^{(\frac{1}{2}-\kappa_4)(T+1)}, \]
      so for (c) we need to ensure that
      \[ \frac{1}{2} c_1\epsilon e^{(\frac{1}{2}-\kappa_4)(T+1)} \geq
        \epsilon R_0 R(T+2) e^{T+2} e^{-\frac{R(T)^2}{8 + 2\kappa_3}}. \]
      This follows exactly as above, for suitable choices of our
      constants.

      Therefore we can apply Proposition~\ref{prop:3ann}, and we
      deduce that $\Vert v(T+2)\Vert_{B_R} \geq e^{\lambda_2} \Vert
      v(T+1)\Vert_{B_R}$. By the discussion above we can replace the
      $L^2$-norms on $B_R$ with those on $B_{R(\tau)}$ and still
      obtain (ii)$_{T+1}$, using that $\lambda_2 >
      \lambda_{1.5}$. This completes the proof of Claim 2. 
      
      \bigskip
     Choosing $\delta$ small, we can assume that  (i)$_1$--(iv)$_1$
     all hold. We can then iterate the statements of the two Claims
     until we find $T^\epsilon$ where (i)--(iv) hold,
     but (iv)$_{T^\epsilon+L_0}$ fails.  Note that then
     (i)$_{T^\epsilon+j}$--(iii)$_{T^\epsilon+j}$  still hold for all
     $j\leq 2L_0$, and so in particular we have
     \[ \label{eq:Dgrow11} D(T^\epsilon+2L_0) \geq e^{L_0 \lambda_{1.5}}
       D(T^\epsilon+L_0). \]
     By (iii)$_{T^\epsilon}$ and (iv)$_{T^\epsilon}$ we also have
     \[ \label{eq:Tepsbound}
       e^{(1 - \kappa_4 - \kappa_1)T^\epsilon} \leq c_1^{-1}\eta_0^{1/2}
       \epsilon^{-1}. \]
     By choosing $\delta$ sufficiently small, we can arrange that
     $T^\epsilon$ is as large as we like.

     Consider the flow $M'_\tau$
     for $\tau\in [T^\epsilon, T^\epsilon+ 2L_0]$ as the graph of a
     function $V(x, \tau)$ over $\mathcal{C}$ on the balls
     $B_{R(\tau)}$. On this ball we can write $M_\tau$ as the graph of
     $u(x,\tau)$ over $\mathcal{C}$, and (viewing each function as a
     function on the cylinder using the graphicality) we have
     \[ \label{eq:Vdefn}
       V(x, \tau) = u(x,\tau) + v(x,\tau) + O( \delta_1
       v(x,\tau)). \]

     We fix a small $r_1 > 0$, and for $y_0$ satisfying
     $|e^{(T^\epsilon+2) / 2}y_0| < r_1$ we consider the
     translated flow $N_\tau = M_\tau' + e^{\tau/2}(y_0, 0,0)$. We
     will show that if our constants are chosen suitably, then because
     of \eqref{eq:Dgrow11}
     the $L^2$-distance of $N_\tau$ to $\mathcal{C}$ grows on the
     interval $[T^\epsilon+L_0, T^\epsilon+2L_0]$. Then
     Proposition~\ref{prop:noC} implies that $N_\tau$ cannot
     converge to a rotation of $\mathcal{C}$ as $\tau\to\infty$.

     Note that instead of computing the $L^2$-distance
     $\mathbf{d}_{\mathcal{C}}(N_\tau)$ of the translated flow, we can
     compute the $L^2$-distance of $M_\tau'$ from $\mathcal{C}$, but with a
     Gaussian centered at $(-e^{\tau/2}y_0, 0,0)$. Let $R_1 > 0$ be
     large, to be chosen below, and suppose that $|x_0| < r_1$ with
     $r_1 = R_1^{-2}$. For $|x| < R_1$ we have
     \[ \Big| |x-x_0|^2 - |x|^2 \Big| \leq r_1^2 + 2R_1r_1 <
       3R_1^{-1}, \]
     so the Gaussians centered at $x_0$ and $0$ are comparable:
     \[  \label{eq:Gausscompare}
       e^{ - \frac{3}{4}R_1^{-1}}   <  e^{-|x-x_0|^2/4} e^{|x|^2/4}
       < e^{\frac{3}{4} R_1^{-1}}, \text{ for } |x| < R_1.\]
     
     In order to control the contribution from outside of the
     $R_1$-ball, we use the non-concentration estimate,
     Proposition~\ref{prop:nonconc}. We apply the estimate, viewing
     $M_\tau'$ as a graph over $\mathcal{C}$ in the statement of
     the proposition, for $\tau$ in the
     intervals $[T, T+2]$, where $T\in [T^\epsilon,
     T^\epsilon+2L_0-2]$. We will use $R=R(T)$ in the proposition. 

     First, consider $T=T^\epsilon$. The $L^2$-norm of $V(x, T^\epsilon)$ on $B_R$
     is controlled by (iv)$_{T^\epsilon}$, and our $L^2$-distance estimate for
     $M_\tau$, so we have
     \[ \Vert V(x, T^\epsilon)\Vert_{L^2(B_R)} \leq C \eta_0^{1/2}
       e^{-(\frac{1}{2}-\kappa_1)T^\epsilon}. \]
     From \eqref{eq:Vdefn} we also have the pointwise bound $|V(x,\tau)| \leq 3\delta_1$
   on $B_{R(\tau)}$ if $\delta_1$ is chosen small enough, so from
     Proposition~\ref{prop:nonconc} we get that for $\tau\in
     [T^\epsilon+1, T^\epsilon+2]$
     \[ |V(x, \tau)| \leq C_{\kappa_3} \left( \eta_0^{1/2}
         e^{-(\frac{1}{2}-\kappa_1)T^\epsilon} + \delta_1
         e^{-\frac{(R(T^\epsilon)-C_1-1)^2}{8+2\kappa_3}}\right)
       e^{|x|^2/8p_0}, \]
       on $B_{e^{1/2}(R(T^\epsilon)-C_1-2)}$. To control the second
       term in the brackets in terms of the first, we need to ensure
       that for some constant $C > 0$ we have
       \[ C + \frac{(R(T^\epsilon)-C_1-1)^2}{8+2\kappa_3} - 
         \left(\frac{1}{2}-\kappa_1\right) T^\epsilon > 0. \]
     The leading order term in $T^\epsilon$ is
       \[ \left(\frac{(2+\kappa_2)^2}{8+2\kappa_3} - \frac{1}{2} +
           \kappa_1\right)T^\epsilon, \]
       and if we choose $\kappa_i=\kappa^i$ for sufficiently small
       $\kappa > 0$, then this coefficient is positive. It follows
       that we then have a constant $C_1$ (depending on the $\kappa_i$),
       such that
       \[ |V(x, \tau)| \leq C_1\eta_0^{1/2}
         e^{-(\frac{1}{2}-\kappa_1)T^\epsilon}
         e^{|x|^2/8p_0}, \]
       for $\tau\in [T^\epsilon+1, T^\epsilon+2]$ and $x\in
       B_{R(\tau)}$, once $T^\epsilon$ is sufficiently 
       large.

       This implies in particular that
       \[ \Vert V(x, T^\epsilon+1)\Vert_{L^2(B_{R(T^\epsilon+1)})}
         \leq C_2 \eta_0^{1/2}
         e^{-(\frac{1}{2}-\kappa_1)(T^\epsilon+1)}, \]
       where $C_2$ depends on the $\kappa_i$. We can repeat the same
       argument multiple times, to eventually obtain the pointwise bound
       \[ |V(x, \tau)| \leq C_3\eta_0^{1/2}
         e^{-(\frac{1}{2}-\kappa_1)T^\epsilon}
         e^{|x|^2/8p_0}, \]
       for all $\tau\in [T^\epsilon+1, T^\epsilon+2L_0]$, and $x\in B_{R(\tau)}$,
       where $C_3$ depends on the $\kappa_i$ (recall that also $L_0$
       depends on the $\kappa_i$).  

       We can now estimate the contribution
       from outside the $R_1$-ball in the $L^2$-distance of $M_\tau'$
       from $\mathcal{C}$, using the
       slightly translated Gaussian, for $\tau\in [T^\epsilon+1, T^\epsilon+2L_0]$. We have
      \[ \label{eq:21} \int_{M_\tau'\setminus B_{R_1}} \overline{\mathrm{dist}}(x)^2
        e^{-|x-x_0|^2/4} &\leq \int_{M_\tau'\cap(B_{R(\tau)} \setminus
          B_{R_1})} |V(x,\tau)|^2 e^{-|x-x_0|^2/4} \\
        &\qquad + C_{\kappa_3}
        e^{-R(\tau)^2/ (4+\kappa_3)}, \]
      and by H\"older's inequality, with $p^{-1} + q^{-1}=1$, we have
      \[ \label{eq:Hold} \int_{M_\tau'\cap(B_{R(\tau)} \setminus
          B_{R_1})} |V(x,\tau)|^2 e^{-|x-x_0|^2/4} &\leq \left( \int_{M'_\tau\cap
          B_{R(\tau)}} |V(x,\tau)|^{2p} e^{-|x-x_0|^2/4} \right)^{1/p}
      \\
      &\qquad \cdot
      \left( \int_{M'_\tau \setminus B_{R_1}}
        e^{-|x-x_0|^2/4}\right)^{1/q} 
    \]
    For any $\rho > 0$, if $|x_0|$ is sufficiently small, we have
    $|x-x_0|^2 \geq (1-\rho) |x|^2 - 1$, so by choosing $R_1$ large
    (recall that $r_1 = R_1^{-2}$ and $|x_0| < r_1$), we have 
    \[ |V(x, \tau)|^{2p} e^{-|x-x_0|^2/4} &\leq e^{1/4} |V(x,
      \tau)|^{2p} e^{-(1-\rho)|x|^2/4} \\
      &\leq C\eta_0^p e^{-p(1-2\kappa_1)T^\epsilon} e^{\frac{p |x|^2}{ 4p_0}
        - (1-\rho)\frac{|x|^2}{4}}, \]
    on $B_{R(\tau)}$.
     We can choose $p < p_0$, then $\rho$ sufficiently small and
     integrate, using also \eqref{eq:Hold}, to get
     \[ \left(\int_{M_\tau'\cap(B_{R(\tau)} \setminus
          B_{R_1})} |V(x,\tau)|^2 e^{-|x-x_0|^2/4} \right)^{1/2}\leq C \eta_0^{1/2}
        e^{-(\frac{1}{2}-\kappa_1)T^\epsilon} \left( \int_{M'_\tau \setminus B_{R_1}}
        e^{-|x-x_0|^2/4}\right)^{1/2q}. \]
     By choosing $R_1$ large, and ensuring that $T^\epsilon$ is 
     sufficiently large, this can be made negligible compared to
     $D(T^\epsilon+L_0)$, using that (iv)$_{T^\epsilon+L_0}$ fails.
     The other term in \eqref{eq:21} then is similarly
     negligible compared to $D(T^\epsilon+L_0)$ for
     $\tau=T^\epsilon+L_0$ and $\tau = T^\epsilon+2L_0$,
     since
     \[ C_{\kappa_3} e^{-R(T^\epsilon+L_0)^2 / (8+2\kappa_3)} \ll
       e^{-(\frac{1}{2}-\kappa_1)(T^\epsilon+2L_0)} \]
     for $\kappa_i = \kappa^i$ as above,  and then choosing $R_0$ large. 
     Finally note that the $L^2$-norm of $u(x,\tau)$ on $B_{R_1}$ is
     bounded by $\eta_0 e^{-(\frac{1}{2}-\kappa_1)\tau}$ (using that
     $M_\tau$ has a degenerate $\mathcal{C}$-singularity at infinity), which is
     also of lower order than $\eta_0^{1/2} e^{-(\frac{1}{2}
       -\kappa_1)(T^\epsilon+L_0)}$ if $\eta_0$ is chosen sufficiently
     small, depending on $L_0$. 

     The conclusion is that given any $c_2 > 0$, if we choose
     $\kappa_i = \kappa^i$ for sufficiently small $\kappa > 0$, and
     then choose $\delta, \eta_0,  R_0^{-1}, R_1^{-1}$
     sufficiently small (depending also on $L_0$), 
     then for $\tau = T^\epsilon+L_0, T^\epsilon+2L_0$ we have
     \[ |\mathbf{d}_{\mathcal{C}}(N_\tau) - D(\tau)| < c_2
       D(T^\epsilon+L_0). \]
     If we further choose $c_2$ small enough (depending on the
     $\lambda_i$), then we have 
     \[ \mathbf{d}_{\mathcal{C}}(N_{T^\epsilon+2L_0}) &\geq
       D(T^\epsilon+2L_0) - c_2 D(T^\epsilon + L_0) \\
       &\geq e^{L_0\lambda_{1.5}} D(T^\epsilon+L_0) - c_2 D(T^\epsilon+L_0) \\
       &\geq e^{L_0\lambda_1}(D(T^\epsilon+L_0) + c_2 D(T^\epsilon+L_0)) \\
     &\geq e^{L_0\lambda_1}
     \mathbf{d}_{\mathcal{C}}(N_{T^\epsilon+L_0}) \geq e^{L_0/4}
     \mathbf{d}_{\mathcal{C}}(N_{T^\epsilon+L_0}). \]
     Using Proposition~\ref{prop:noC}, it follows that $N_\tau$ cannot
    have a cylindrical singularity at infinity (with limit
    $Q\mathcal{C}$ for any rotation $Q$).

     In terms of the flow $L'_t$, this means that $L'_t$ cannot have a
    cylindrical singularity at $(y_0,0,0,0)$ with
     $|e^{(T^\epsilon+2)/2}y_0| < r_1$, i.e. with
     \[ |y_0| < r_1 e^{-(T^\epsilon+2)/2}. \]
     From \eqref{eq:Tepsbound} we know that once $\epsilon$ is
     sufficiently small, then we have $r_1 e^{-(T^\epsilon+2)/2} \geq
     \epsilon^{2/3}$. This completes the proof. 
   \end{proof}

   \section{Proof of the main results}
 In this section we will prove Theorems~\ref{thm:main} and \ref{thm:surgery}, first proving a
 result that locally perturbs away degenerate singularities. Recall that we are considering a mean 
 curvature flow $S_t \subset \mathbb{R}^3$ 
of compact surfaces which encounters only spherical
and cylindrical singularities. Recall that we can define $S_t$ for all $t \geq 0$
as a weak flow, for instance as a level set flow or Brakke flow. By
Choi-Haslhofer-Hershkovits~\cite{CHH22} and
Hershkovits-White~\cite{HW19} the different weak formulations of the
flow coincide in our setting, and are non-fattening. 

In the arguments later, we will need to consider small perturbations of
the flow $S_t$ on different time intervals $[a,b]$. When we say that a certain
result holds for flows $S_t'$ on $[a,b]$ that are sufficiently
close to $S_t$, this means that if $S^i_t$ is a sequence of unit
regular, cyclic, integral Brakke flows converging weakly to $S_t$ on $[a,b]$,
then the result holds for sufficiently large $i$. Note that this could
be quantified by metrizing the
weak convergence of Brakke flows. Below we will also have a further
perturbation $S''_t$ of $S'_t$, and we will use barriers to control
$S''_t$ as a graph over $S'_t$. 

As a preliminary step we will build suitable barrier flows in order to
control the behavior of flows close to $S_t$. 
If the flow $S_t$ were assumed to be mean convex, then we could
consider time translated solutions $S_{t\pm \epsilon}$, which could
serve as barriers on the two sides of $S_t$. In general we will use
the fact that by the work of Choi-Haslhofer-Hershkovits~\cite{CHH22}
the flow is mean convex, for suitable
orientations, near any cylindrical singularity. Away from the
cylindrical singularities the flow is smooth, so we can construct
barriers as graphs, and we will combine these with suitable time translations
near the singularities. This is related to the
method used by Hershkovits-White~\cite{HW19} to prove uniqueness of
the flow through mean convex singularities. 

Given a singularity  $X_0 := (x_0, t_0)$ of $S_t$, there is
an $r_{X_0} > 0$ such that the flow $S_t$ is mean convex in the
parabolic neighborhood
\[ Q_{r_{X_0}}(X_0) := \{(x,t)\, :\, |x - x_0| < r_{X_0}, |t - t_0| <
  r_{X_0}^2\},\]
for a suitable orientation. 
Letting $\Sigma$ be the singular set of the flow $S_t$ in space-time, we
can cover $\Sigma$ by finitely many such neighborhoods. It follows
that there are (bounded) open sets $\mathcal{U}_1
\subset\subset \mathcal{U}_2\subset
\mathbb{R}^3\times[0,\infty)$ such that if $X\in \mathcal{U}_2$,
then the flow is mean convex near $X$ for a suitable orientation,
while the flow is smooth on the
complement of $\mathcal{U}_1$. In particular we have a constant $A_0$
such that the second fundamental form satisfies $|\nabla^k A| < A_0$
on the complement of $\mathcal{U}_1$, for $k=0,1,2$.
By the strong maximum principle we can also
assume that the mean curvature is strictly positive on
$\mathcal{U}_2\setminus \mathcal{U}_1$ and so we have a lower bound
$|H| > h_0 > 0$. All of these properties also hold for mean curvature
flows $S'_t$ that are sufficiently close to $S_t$. 

If $S'_t$ is a mean curvature flow that is sufficiently close to
$S_t$, then we will construct barriers for mean curvature flows
$S''_t$ whose initial condition $S''_{t_0}$ is very close to
$S'_{t_0}$. On $\mathcal{U}_1$ these barriers will be given by
suitable time translations of $S'_t$,
while outside of $\mathcal{U}_2$ they are defined as graphs over
$S'_t$. If $U\subset \mathbb{R}^3$ is an open set, we will say that on
$U$ a hypersurface $S''$ lies 
between $S'_{t_1\pm \epsilon}$ for some $t_1, \epsilon$, if the following
holds: we have $\mathcal{U}_1 \subset U\times [t_1-\epsilon,
    t_1+\epsilon] \subset \mathcal{U}_2$, so in particular on each connected 
component of $U$ the mean curvature of $S'_t$ has a definite sign for
$t\in [t_1-\epsilon, t_1+\epsilon]$ and we can write $S'_t\cap
U = \partial \Omega_t\cap U$ for sets $\Omega_t$
with $\Omega_t \subset \Omega_{t'}$ for $t < t'$. We then require that
$S''\cap U \subset \Omega_{t_1 + \epsilon}$ while $S'' \cap
\Omega_{t_1 -\epsilon} = \emptyset$. 

The main result that we need is the following.

\begin{prop}\label{prop:barrier}
  There are $\gamma, \sigma > 0$ depending on the flow $S_t$, such
  that for all $T > 0$ we have the following. We can choose open sets
  $U_1 \subset\subset U_2\subset \mathbb{R}^3$ such that on the time interval
  $I := [T-2\gamma, T+2\gamma]$ we have
  \[\label{eq:Ucontain} \mathcal{U}_1 \subset U_1 \times I \subset U_2 \times I\subset
    \mathcal{U}_2. \]
  Suppose that $t_0\in
  [T-\gamma, T+\gamma]$, $t_0' < t_0$, and $S'_t$ is a mean curvature
  flow on $[t_0', T+2\gamma]$ that is
  sufficiently close to $S_t$. Suppose in addition that $S''_t$ is
  a mean curvature flow on $[t_0, T+2\gamma]$ satisfying that for some
  $\epsilon < \min\{\sigma, t_0-t_0'\}$ we have
  \begin{enumerate}
    \item Outside of $U_1$, the surface $S''_{t_0}$ is in the
      $\sigma\epsilon$-neighborhood of $S'_{t_0}$,
    \item In $U_2$, the surface $S''_{t_0}$ lies between $S'_{t_0 \pm
        \epsilon}$.
    \end{enumerate}
  Then for $t\in [t_0, T+\gamma]$, on $U_1$, the surface $S''_t$ lies
  between $S'_{t\pm \epsilon}$. 
\end{prop}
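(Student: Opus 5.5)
We build a pair of barrier flows $\Gamma^\pm_t$ for $t\in[t_0,T+\gamma]$ and then apply the avoidance principle for Brakke flows (or level set flows) to $S''_t$.

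\textbf{Choice of $\gamma$ and of $U_1,U_2$.} Since $\mathcal{U}_1\subset\subset\mathcal{U}_2$ are bounded, compactness gives a $\gamma>0$ with the following property: for every $T$, if $\pi$ is the spatial projection and $I=[T-2\gamma,T+2\gamma]$, then the sets
\[
U_1 := \text{a small neighborhood of } \pi\big(\mathcal{U}_1\cap(\mathbb{R}^3\times I)\big), \qquad U_2 := \text{a slightly larger neighborhood}
\]
satisfy \eqref{eq:Ucontain}. We also require that the mean convex structure holds on $U_2\times I$. On $(U_2\setminus U_1)\times I$ the flow $S_t$, and hence any sufficiently close $S'_t$, is smooth with $|\nabla^kA|<A_0$ and $|H|>h_0$.

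\textbf{Construction of the barriers.} Take $\Gamma^+_t$ to be $S'_{t+\epsilon}$ on $U_2$, that is, the boundary of $\Omega_{t+\epsilon}$. Outside $U_1$, take $\Gamma^+_t$ to be the normal graph over $S'_t$ of a function
\[
w_+(x,t) = \sigma\epsilon\,\phi(x)\,e^{C(t-t_0)} + \epsilon\,\psi(x),
\]
where $\phi,\psi$ are cutoffs interpolating between the two regions. Because $|H|>h_0$ on $U_2\setminus U_1$, the surface $S'_{t+\epsilon}$ is itself a graph over $S'_t$ there, with height between $h_0\epsilon/2$ and $2A_0\epsilon$. We therefore define the outside barrier on $U_2\setminus U_1$ as the minimum (in the sense of inclusion) of the time-translate and the exponentially growing graph. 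The graph $\sigma\epsilon e^{C(t-t_0)}$ is a supersolution of the linearized graphical equation once $C$ depends on $A_0$; smallness of $\epsilon$ keeps the quadratic errors under control.

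Choose $\sigma$ small so that $\sigma\epsilon e^{4C\gamma}<h_0\epsilon/4$ on the overlap region. On the overlap annulus, then, the graphical barrier lies strictly inside $S'_{t+\epsilon}$ near $\partial U_1$ and strictly outside it near $\partial U_2$. Hence the "min" of the two barriers is a viscosity, or weak, supersolution: near each boundary only one piece is active, so no kink with the wrong angle appears. Define $\Gamma^-_t$ symmetrically, using $S'_{t-\epsilon}$ and $-w_-$. Note that $S'_{t-\epsilon}$ is defined for $t\ge t_0$ because $\epsilon<t_0-t_0'$.

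\textbf{Comparison.} Hypotheses (1) and (2) say exactly that $S''_{t_0}$ lies strictly between $\Gamma^\pm_{t_0}$. Here (1) handles the region outside $U_1$, where the barrier height is at least $\sigma\epsilon$, and (2) handles $U_2$. Since $\Gamma^\pm_t$ are sub/supersolutions away from one another's interiors, the avoidance principle shows that $S''_t$ stays between them for $t\in[t_0,T+\gamma]$. On $U_1$ the barriers coincide with $S'_{t\pm\epsilon}$, which gives the conclusion.

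\textbf{Main obstacle.} The delicate step is the gluing on $U_2\setminus U_1$. The constraints must be ordered correctly: $\gamma$ first, then $C$ from $A_0$, then $\sigma$ from $h_0,A_0,C,\gamma$. Each must be independent of $T$ and of $S'$. In addition, we need $\mathcal{U}_1$-singularities of $S'$ never to reach the graphical region. Closeness of $S'$ to $S$ gives this, since both the singular set and the curvature bounds are stable under weak convergence by the local regularity theorem.
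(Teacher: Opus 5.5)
Your overall strategy matches the paper's: time-translates $S'_{t\pm\epsilon}$ of the locally mean-convex flow on $U_1$, a graphical supersolution over $S'_t$ outside, glued by a minimum on $U_2\setminus U_1$ using $|H|>h_0$, then avoidance. The gap is in the gluing step, which you yourself flag as the delicate one.

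You set $\Gamma^+_t=S'_{t+\epsilon}$ on $U_1$ and $\Gamma^+_t=$ graph outside $U_2$. For these to match, the minimum on $U_2\setminus U_1$ must be the time-translate near $\partial U_1$ and the graph near $\partial U_2$. In other words, the graph must lie \emph{outside} $S'_{t+\epsilon}$ near $\partial U_1$ and \emph{inside} it near $\partial U_2$. You state the reverse, and with your configuration $\Gamma^+_t$ jumps across both $\partial U_1$ and $\partial U_2$, so it is not a barrier. Your only quantitative condition, $\sigma\epsilon e^{4C\gamma}<h_0\epsilon/4$, just places the graph below the translate. Nothing forces the graph above the translate near $\partial U_1$, where the translate's height can be as large as $C_1\epsilon$ (your $2A_0\epsilon$).

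Forcing this needs an $O(\epsilon)$ term that is large near $U_1$ and small near $\partial U_2$. That is your $\epsilon\psi$, and it is the paper's $\epsilon V$ with $V>2C_1$ on $U_1^+$ and $V<C_1^{-1}/2$ outside $U_2^-$. Once $\psi$ is present, the supersolution check is no longer just ``$C$ depending on $A_0$''. In the transition region, where the graph is still active, you must absorb $-\epsilon(\Delta\psi+|A|^2\psi)$, an error of size $\epsilon$ times a constant depending on $\psi$. A growth term with coefficient $\sigma\epsilon$ does this only if $\sigma C\gtrsim\|\psi\|_{C^2}$. That is incompatible with your ordering ($\gamma$ first, then $C$, then $\sigma$ with $\sigma e^{4C\gamma}<h_0/4$), so the constraints become circular.

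The paper resolves this in three steps:
\begin{enumerate}
\item It uses the growth term $\epsilon K(t-t_0)$, with $K$ chosen from the curvature bounds and the derivatives of $V$.
\item It then shrinks $\gamma$ so that $K(t-t_0)<C_1^{-1}/2$, which keeps the graph below the translate near $\partial U_2$ over the whole time interval.
\item It takes $\sigma=\min\{C_1^{-1},\inf V\}$, which does not depend on $K$.
\end{enumerate}
In your notation the fix is: take $\phi\equiv1$, fix $\sigma$ and $\psi$ first, then $C$, and choose $\gamma$ last. A genuine cutoff $\phi$ would in any case spoil both the supersolution inequality and the lower bound $\sigma\epsilon$ needed for hypothesis (1) where it decays. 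Shrinking $\gamma$ at the end is harmless, since it only makes the containment $\mathcal{U}_1\subset U_1\times I\subset U_2\times I\subset\mathcal{U}_2$ easier.
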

\begin{proof}
  We will first assume that $S'_t = S_t$.
  We first fix $ T> 0$, and then at the end we show that the constants
  that we obtained can be chosen independently of $T$.
  To construct the sets $U_1 \subset\subset U_2$,
 note that in the $t=T$ time slice we have $\overline{\mathcal{U}_1}\cap
\{t=T\} \subset \mathcal{U}_2\cap \{t=T\}$. We can choose
$U_1\subset\subset U_2$ to be nested between these sets.
 Then for
sufficiently small $\gamma$ we will have
\eqref{eq:Ucontain}. Perturbing the sets slightly, and decreasing
$\gamma$ if necessary, we can assume that $\partial U_1, \partial U_2$
are smooth, and they intersect $S_t$ transversely for $t\in
I = [T-2\gamma, T+2\gamma]$. 
We also
choose sets $U_1^{\pm}$ and $U_2^\pm$ that satisfy the same
properties, and so that we have
\[ U_1^- \subset\subset U_1 \subset\subset U_1^+\subset\subset U_2^-
  \subset\subset U_2 \subset\subset U_2^+. \]
 Note that $S_t$ has uniform curvature bounds
outside of $U_1^-$ for $t\in I$. Further shrinking $\gamma$, we can
assume that we can parametrize a subset of $S_t\cap (U_2^+\setminus U_1^-)$
using normal graphs of $u(x,t)$ defined on $S_T\cap (U_2\setminus
U_1)$. Since the mean
curvature is strictly bounded away from zero, we have a constant $C_1$
such that for any $t_1 < t_2$ we have 
\[ C_1^{-1} (t_2 - t_1) < |u(x,t_2) - u(x,t_1)| < C_1 (t_2-t_1). \]
For small $\epsilon > 0$ we define the functions $u^\epsilon(x,t) = u(x, t\pm \epsilon)$
on $S_T\cap (U_2 \setminus U_1)$, where the sign of $\epsilon$ is
chosen on each connected component to make $u^\epsilon > u$. It follows that we have
\[ C_1^{-1} \epsilon < u^\epsilon(x,t) - u(x,t) < C_1\epsilon. \]
Note that $u^\epsilon(x,t)$ parametrizes a subset of $S_{t\pm \epsilon}\cap
(U_2^+\setminus U_1^-)$.

Shrinking $\gamma$ even more, we can also parametrize a subset of 
$S_t\setminus U_1^-$ using normal graphs of $v(x,t)$ over  $S_T\setminus
U_1$ (note that $v=u$ on $U_2\setminus U_1$).
We will construct a positive supersolution of the graphical mean curvature
flow over $S_T\setminus U_1$ for $t\geq t_0$. Let $V >
0$ be a smooth function on 
$S_T\setminus U_1$ such that $V < C_1^{-1}/2$ outside of $U_2^-$, and
$V > 2C_1$ in $U_1^+$. We can choose such $V$ to be the
restriction of a smooth function from $\mathbb{R}^3$. Using that
$U_1^+\subset\subset U_2^-$, we can arrange for
$V$ to have derivatives bounded uniformly in terms of the distance
between $U_1^+$ and $\partial U_2^-$, which in turn can be  controlled
using the distance between $\mathcal{U}_1$ and $\partial\mathcal{U}_2$. 
For $\epsilon > 0$ let us define
\[ v^\epsilon(x,t) = v(x,t) + \epsilon( V(x) + K(t-t_0)), \]
for large $K > 0$ to be chosen. Since $v(x,t)$ satisfies the graphical mean
curvature flow, and using that outside of $U_1^-$ the curvature is
uniformly bounded, it follows that if $K$ is sufficiently large and
$\epsilon$ sufficiently small, depending on the curvature bounds and
the bounds for the derivatives of $V$, then $v^\epsilon(x,t)$ is a
supersolution of the graphical mean curvature flow equation
over $S_T\setminus U_1$ for
$t\geq t_0$.

We define a global supersolution $N^\epsilon_t$ of the mean curvature flow for $t\in
[t_0, T+2\gamma]$ as follows. Inside $U_1^+$ we set $N^\epsilon_t =
S_{t\pm \epsilon}$, where the sign is chosen as above. Outside of
$U_2^-$, we let $N^\epsilon_t$ be the graph of
$v^\epsilon(x,t)$. In the 
annular region $U_2\setminus U_1$, we let
$N^\epsilon_t$ be the graph 
of
\[ V^\epsilon(x,t) := \min\{u^\epsilon(x,t), v^\epsilon(x,t)\}.\]
We claim that if
$t-t_0$ is sufficiently small (i.e. $\gamma$ is small), then this is
well defined. Indeed, on $U_2\setminus U_2^-$ we have $u^\epsilon(x,t) >
u(x,t) + C_1^{-1}\epsilon$, but at the same time
\[ v^\epsilon(x,t) < u(x,t) + \epsilon (C_1^{-1}/2 + K(t-t_0)). \]
So if $K(t-t_0) < C_1^{-1}/2$, then near $\partial U_2$ the minimum is
given by $v^\epsilon$. In $U_1^+\setminus U_1$ we have $u^\epsilon(x,t) <
u(x,t) + C_1\epsilon$, but also
\[ v^\epsilon(x,t) > u(x,t) + 2\epsilon C_1, \]
so here the minimum is given by $u^\epsilon(x,t)$. Moreover the graph
of $u^\epsilon(x,t)$ agrees with $S_{t\pm \epsilon}$ in $U_2$, so the
three pieces fit together to give a supersolution outside of $U_1$ in
the barrier sense (note that $V^\epsilon$ is not smooth), and a
solution of the mean curvature flow inside
$U_1$. Note that outside of $U_1$ we have $V^\epsilon(x,t) > v(x,t) +
\sigma\epsilon$, where $\sigma = \min\{C_1^{-1}, \inf V\}$. 

We similarly define $N^\epsilon_t$ for negative $\epsilon$ by
switching the orientation. 
We now consider a flow $S''_t$ satisfying the conditions (1), (2) in
the proposition. These conditions, together with the lower bound for
$V^\epsilon$ and the fact that on $U_1$ the $N^\epsilon_t$ coincide
with $S_{t\pm \epsilon}$, imply that $S''_{t_0}$ lies between $N^{\pm
  \epsilon}_{t_0}$. Using the supersolution property (for different
orientations depending on the sign of $\epsilon$) it follows that
$S''_t$ lies between $N^{\pm \epsilon}_t$ for $t \geq t_0$ as
well. The conclusion follows since on $U_1$ we have $N^{\pm
  \epsilon}_t = S_{t\pm \epsilon}$.

So far we have assumed that $S'_t = S_t$. In general, if $S'_t$ is
sufficiently close to $S_t$, then $S'_t$ satisfies the same curvature
bounds outside of $U_1$ as $S_t$, and so the same barrier construction
will work, with the same constants. To see that the constants $\gamma,
\sigma$ can be chosen independently of $T$, we can argue as follows.
Writing $T_{ext}$ for the extinction time of $S_t$, we can cover $[0,
T_{ext}+1]$ by finitely many intervals of the form $(T_i - \gamma_i/2,
T_i + \gamma_i/2)$. We can then let $\gamma, \sigma$ be the smallest
of the corresponding $\gamma_i/2$ and $\sigma_i$.
\end{proof}

In our application we will consider a flow $S'_t$ that is sufficiently close
to $S_t$ on an interval $[t_1', T_{ext}+1]$ so that
Proposition~\ref{prop:barrier} can be applied. We will assume moreover
that $S'_t$ is smooth for $t\in [t_1, t_2]$ for some $t_2 \geq t_1$
and $t_2 > t_1'$, and has a degenerate
cylindrical singularity at a point $(x_3,t_3)$ with $t_3 > t_2$ very close to
$t_2$, so that $t_3-t_2 < \gamma$. We will construct suitable
perturbations $S''_{t_2}$ of $S'_{t_2}$, and then use
Proposition~\ref{prop:barrier} in order to control the corresponding
flow $S''_t$ at times $t\in [t_2, t_3]$. The perturbation 
$S''_{t_2}$ will in turn be obtained by constructing a perturbation
$S''_{t_1}$ of $S'_{t_1}$. In the application to Theorem~\ref{thm:main} we will have
$t_1=0$, while for Theorem~\ref{thm:surgery} we will use $t_1=t_2$.

We let $L_t$ be a suitable
rotation, translation and scaling of the flow $S'_t$, so that $L_t$ has a
degenerate $\mathcal{C}$-singularity at $(0,0)$ corresponding to the
singularity $(x_3,t_3)$ of $S'_t$. For a small $\delta_0 >
0$ we will assume that $L_{-2}$ is $\delta_0$-graphical over $\sqrt{2}\mathcal{C}$ on
 $B_{\delta_0^{-1}}$, and moreover $L_{-1}$ corresponds to a
 translation, rotation and scaling of $S'_{t_2}$. Let us denote by $M_\tau$ the corresponding
 rescaled flow with $M_0=L_{-1}$. Below we will choose the $R_0$ in the definition
 \eqref{eq:Rtaudefn}  of $R(\tau)$ very large. Using the graphicality
 from Proposition~\ref{prop:graphicality}, taking $\delta_0$
 sufficiently small, we can arrange that after scaling back to
 the flow $S'_t$, the balls $M_\tau\cap B_{R(\tau)+10}$ are
 contained in the set $U_1$ that appears in
 Proposition~\ref{prop:barrier}, applied for the interval
 $[t_2,t_3]$. 
 
In order to perturb away the degenerate singularity of $L_t$, we
want to consider the flows $L^a_t$, where $L^a_{-1}$ is roughly the
graph of $a\chi_{R_0}y$ over
$L_{-1}$ for
$|a|$ sufficiently small. Here $\chi_{R_0}$ is a cutoff function
supported in $B_{R_0}$ and equal to 1 on $B_{R_0-1}$, and $y$ is the
coordinate along the $\mathbb{R}$-factor of $\mathcal{C}$. When
$t_1 < t_2$, then in order to
achieve this using a perturbation of the initial condition $S'_{t_1}$, we will
need the following result from \cite[Lemma 5.3]{SX24} (see also
\cite[Lemma 5.2]{HV92}).
\begin{prop}\label{prop:pert1}
  For $t\in [t_1, t_2]$, write $\mathcal{P}_t:L^2(S'_{t_1})\to L^2(S'_t)$
  for the map satisfying the linearized equation $\partial_t P_tf =
  \Delta \mathcal{P}_tf + |A|^2\mathcal{P}_t f$, and $\mathcal{P}_{t_1}f=f$ for all $f$. Then for
  any $t \in [t_1, t_2]$ the image of $\mathcal{P}_t$ is dense in
  $W^{1,p}(S'_t)$, for any $p > 1$. 
\end{prop}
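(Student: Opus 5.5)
The plan is a duality argument combined with backward uniqueness, the standard route for density of the image of a parabolic solution operator. Since $S'_t$ is smooth on $[t_1,t_2]$, we identify all $S'_t$ with a fixed surface $\Sigma$ via normal graphs. The equation $\partial_t f = \Delta f + |A|^2 f$ then becomes a linear parabolic equation on $\Sigma$ with smooth time-dependent coefficients, and $\mathcal{P}_t$ is its solution operator on $[t_1,t]$.

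By the smoothing property of the equation, for $t>t_1$ the image of $\mathcal{P}_t$ consists of smooth functions; for $t=t_1$ the claim is trivial. By Hahn--Banach it suffices to show the following: if a functional $\ell\in (W^{1,p}(S'_t))^* $ vanishes on $\mathcal{P}_t(L^2)$, then $\ell=0$. Represent $\ell$ as a distribution $g\in W^{-1,p'}(S'_t)$.

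Next, solve the adjoint (backward) equation $-\partial_s w = \Delta w + |A|^2 w + (\text{lower order terms from the area change, } -H^2 w \text{ etc.})$ on $[t_1,t]$ with terminal data $w(t)=g$. This backward problem is well posed, and $w(s)$ is smooth for $s<t$. Differentiating $\int_{S'_s} w(s)\,\mathcal{P}_s f$ in $s$ shows it is constant, so
\[ \langle g,\mathcal{P}_t f\rangle = \int_{S'_{t_1}} w(t_1) f \]
for all $f\in L^2$. The vanishing of $\ell$ therefore gives $w(t_1)=0$. The pairing at $s=t$ has to be justified by a limiting argument, since $g$ is only a distribution.

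The key step, and the expected main obstacle, is concluding $w\equiv 0$ from $w(t_1)=0$. This is a backward uniqueness statement for the time-reversed equation: $\tilde w(s)=w(t+t_1-s)$ solves a forward parabolic equation on a compact manifold with $\tilde w(t)=0$, so $\tilde w\equiv 0$ on $[t_1,t]$. I would invoke the classical log-convexity / Agmon--Nirenberg backward uniqueness for parabolic equations with smooth coefficients on closed manifolds. This is the content of \cite[Lemma 5.3]{SX24} and \cite[Lemma 5.2]{HV92}.

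One technical point is that the solution only has distributional initial data $g$. I would handle this by working on $[t_1,t-\eta]$, where $w$ is smooth, and applying backward uniqueness there. Then $w(s)\to g$ as $s\to t$ in $W^{-1,p'}$ yields $g=0$. Hence $\ell=0$, and the image of $\mathcal{P}_t$ is dense in $W^{1,p}(S'_t)$ for every $p>1$.
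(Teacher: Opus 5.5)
Your argument is correct: Hahn--Banach, then the adjoint backward equation $-\partial_s w = \Delta w + |A|^2 w - H^2 w$ with terminal data $g\in W^{-1,p'}$, then backward uniqueness to pass from $w(t_1)=0$ to $g=0$, is the standard proof of this density statement. The paper gives no proof of its own and only cites \cite[Lemma 5.3]{SX24} and \cite[Lemma 5.2]{HV92}, which, as far as I know, rest on this same duality and backward-uniqueness argument, so your route should coincide with the one the paper relies on.
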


\begin{remark}\label{rem:initialperturb}
  We expect that even if the flow has some nondegenerate cylindrical
  singularities for $t\in (t_1,t_2)$, the image of $\mathcal{P}_{t}$
  is dense whenever $S'_{t}$ is smooth. If such an extension of
  Proposition~\ref{prop:pert1} holds, then it seems likely
  that the statement of
  Theorem~\ref{thm:main} can be extended for all time, not just up to the
  first singular time.
\end{remark}

We will use $p > 2$ so that $W^{1,p}\subset L^\infty$, and note that
the image of $C^2$ under $\mathcal{P}_t$ is also dense. Let $\omega >
0$ be small, to be chosen. As above, suppose that
$L_{-1}$ is a suitable translation, rotation, and
scaling of $S'_{t_2}$, and to be specific suppose that
\[ \label{eq:71} L_{-1} = \Lambda_0 Q_0S'_{t_2} + x_3, \]
for $\Lambda_0\in \mathbb{R}, Q_0\in SO(3), x_3 \in \mathbb{R}^3$.
We can view the function $f = \chi_{R_0} y$
(defined initially on $L_{-1}$) as a function $\tilde{f}$ on
$S'_{t_2}$, defined by $\tilde{f}(x) = f(\Lambda_0 Q_0 x + x_3)$.
By Proposition~\ref{prop:pert1} we can find a $C^2$ function $h$ on $S'_{t_1}$ so
that
\[ \label{eq:hdefn1}
  |\mathcal{P}_{t_2} h - \tilde{f}| < \omega.\]
For small $a$, we then let
$S'^a_t$ denote the mean curvature flows such that the initial surface $S'^a_{t_1}$ is the
graph of $ah$ over $S'_{t_1}$. Let us denote by $L^a_{-1}$ the surfaces
obtained from $S'^a_{t_2}$ using the same transformation as in
\eqref{eq:71}, and let $L^a_t$ denote the corresponding flows, which
can be defined for $t \geq -2$.  We also let $M^a_\tau$ denote the rescaled mean curvature
flows, defined for $\tau\geq -\ln 2$,  for which $M^a_0=L^a_{-1}$. 

We will use Proposition~\ref{prop:barrier} to compare
the flows $L^a_t$ for different values of $a$. For this we will need the
following, 
\begin{prop}\label{prop:approxy}
  Let $\epsilon > 0$, and suppose that $|a|, |a'|, \omega$ are sufficiently small,
  depending on $\epsilon$ and on the flow $S'_t$ for $t\in [t_1,t_2]$. Then
  $S'^{a'}_{t_2}$ is the graph of a function $u$ over $S'^a_{t_2}$,
  where
  \[ \label{eq:72} |u - (a'-a)\tilde{f}| \leq \epsilon |a'-a|. \]
  Here we are viewing $\tilde{f}$ as a function on $S'^a_{t_2}$ by
  viewing $S'^a_{t_2}$ as a graph over $S'_{t_2}$ and using the same
  scaling as in \eqref{eq:71}. 
\end{prop}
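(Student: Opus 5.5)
The plan is to use that on $[t_1,t_2]$ the flow $S'_t$ is smooth with bounded geometry, so graphical perturbations of it are governed by a quasilinear parabolic equation whose linearization at $S'_t$ is $\partial_t w = \Delta w + |A|^2 w$. The conclusion then follows from differentiability of the solution map in the initial data, applied along the one-parameter family $a\mapsto S'^a_t$.

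\emph{Case $t_1<t_2$.} For $|a|$ small, short-time stability over a smooth compact flow (uniform curvature bounds for $S'_t$, $t\in[t_1,t_2]$, plus standard parabolic Schauder estimates on the compact interval) shows that $S'^a_t$ is the normal graph of a function $w_a(\cdot,t)$ over $S'_t$ for all $t\in[t_1,t_2]$. We have $w_0=0$ and $\Vert w_a\Vert_{C^{2,\alpha}}\le C|a|$. The $w_a$ solve the graphical MCF equation
\[ \partial_t w = \Delta w + |A|^2 w + Q(w,\nabla w,\nabla^2 w), \]
with $Q$ at least quadratic. Taking the difference $w_{a'}-w_a$, it satisfies a linear equation whose coefficients differ from those of the linearized operator by $O(|a|+|a'|)$. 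By linear parabolic estimates this gives
\[ \bigl|(w_{a'}-w_a)(\cdot,t_2) - (a'-a)\mathcal{P}_{t_2}h\bigr| \le C(|a|+|a'|)\,|a'-a|, \]
where $C$ depends on $\Vert h\Vert_{C^{2,\alpha}}$ and on the flow. We may take $h$ in $C^{2,\alpha}$: first approximate in $C^2$, and note that the image of such $h$ is still dense.

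Next we convert this to a graph over $S'^a_{t_2}$. The function $u$ representing $S'^{a'}_{t_2}$ over $S'^a_{t_2}$ agrees with $w_{a'}-w_a$, composed with the identification map, up to an error $O(|a|\,|a'-a|)$. This uses that the normal of $S'^a_{t_2}$ differs from that of $S'_{t_2}$ by $O(|a|)$, together with the smallness of the $C^1$ norms. Similarly, $\tilde f$ pulled back to $S'^a_{t_2}$ differs from its values at the base point by $O(|a|)$, since $\tilde f$ is smooth with bounded derivatives (the cutoff is fixed). Finally, by \eqref{eq:hdefn1} we have $|\mathcal{P}_{t_2}h-\tilde f|<\omega$. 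Combining these,
\[ |u-(a'-a)\tilde f| \le \bigl(\omega + C(|a|+|a'|)\bigr)|a'-a|. \]
This is at most $\epsilon|a'-a|$ once we take $\omega<\epsilon/2$ and then $|a|,|a'|$ small. Note that the choice of $h$, and hence $C$, depends on $\omega$; so we fix $\omega$ first, then $h$, then the smallness of $a$.

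\emph{Case $t_1=t_2$.} Here $\mathcal{P}_{t_2}$ is the identity, and we may take $h=\tilde f$, which is smooth. The estimate then reduces to the graph-conversion step above.

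\emph{Main obstacle.} The main difficulty is the Lipschitz-in-$a$ linearization estimate with the factor $|a'-a|$, rather than merely $o(1)$. I would handle it by writing
\[ w_{a'}-w_a = \int_a^{a'} \partial_s w_s\, ds, \]
where differentiability in $s$ comes from the implicit function theorem in parabolic Hölder spaces. The derivative $\partial_s w_s$ solves the linearized equation at the graph of $w_s$, and $\partial_s w_s - \mathcal{P}_t h = O(|s|)$ in $C^0$ by the smooth dependence of the coefficients on $w_s$.
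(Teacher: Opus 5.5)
Your argument is correct and is essentially the paper's proof: both linearize the MCF solution map in the initial data with a remainder of order $(|a|+|a'|)\,|a'-a|$ (the paper's $C|a'-a|^2$ is absorbed in this), and then use \eqref{eq:hdefn1} to replace $\mathcal{P}_{t_2}h$ by $\tilde f$. The only difference is bookkeeping: the paper linearizes around $S'^a_t$, using $\mathcal{P}^a_t$ and its closeness to $\mathcal{P}_t$, and changes base at time $t_1$, whereas you linearize once around $S'_t$ and change base at time $t_2$; your explicit order of choices ($\omega$, then $h$, then $|a|,|a'|$) also makes precise a dependence the paper leaves implicit.
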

\begin{proof}
  To see this, consider first the linearized operator along the flow
  $S'^a_t$. Write $\mathcal{P}^a_t$ for its solution operator with a
  given initial condition at time $t_1$. As long as $a$ is sufficiently small, we will have that
  $|\mathcal{P}^a_th - \mathcal{P}_th| < \frac{1}{2}\epsilon$ for
  $t\leq t_2$, viewing $\mathcal{P}_th$ as a 
  function on $S'^a_t$ using the graphicality over $S'_t$. Next, note that
  the surface $S'^{a'}_{t_1}$ is the graph of $(a'-a)\tilde{h}$ over
  $S'^a_{t_1}$, where
  \[ \Big| (a'-a)\tilde{h} - (a'-a)h \Big| \leq C(|a| +
    |a'|)(|a'-a|). \]
  The flow with initial condition $S'^{a'}_{t_1}$ can be approximated by
  the solution $(a'-a)\mathcal{P}^a_t\tilde{h}$ of the corresponding
  linear equation, and the error at $t=t_2$ will be quadratic in
  $|a'-a|$. Therefore  $S'^{a'}_{t_2}$
  is the graph of $u$ over $S'^a_{t_2}$, where
  \[ |u - (a'-a)\mathcal{P}^a_{t_2}\tilde{h}| \leq C |a'-a|^2. \]
  Using the estimates above, we then have
  \[ |u - (a'-a)\tilde{f}| &\leq |u - (a'-a)\mathcal{P}_{t_2} h| + |a'-a|
    |\mathcal{P}_{t_2} h - \tilde{f}| \\
    &\leq C|a'-a|^2 + \frac{1}{2}\epsilon
    |a'-a| + \omega |a'-a|. \]
  If $|a|, |a'|, \omega$ are sufficiently small (depending on
  $\epsilon$) then we get the required result. 
\end{proof}

\begin{remark}\label{rem:barrier}
Let us describe here how this result will be used in conjunction with
Proposition~\ref{prop:barrier}, and in particular how small we need to
choose $\epsilon$ (and in turn $\omega, a,a'$ by
Proposition~\ref{prop:approxy}).  The actual application will
be slightly more complicated, involving  translations of
these flows in spacetime. Consider the rescaled flows $M^a_\tau$ and
$M^{a'}_\tau$. The estimate \eqref{eq:72} says that we can write
$M^{a'}_0$ as the graph of a function $u$ over $M^a_0$, where
\[ \label{eq:73} |u - (a'-a)\chi_{R_0}y| \leq \Lambda_0 \epsilon |a'-a|, \]
and $\Lambda_0 > 0$ is the scaling factor from \eqref{eq:71}. In this
argument we are treating $\Lambda_0$ as fixed, so we can choose
$\epsilon$ small to make $\Lambda_0\epsilon$ as small as we like. We
can also arrange that along the rescaled flow $M^a_\tau$ the balls
$B_{R(\tau)+10}$ (when scaled back to the original flow)
are contained in the set $U_1$ used in
Proposition~\ref{prop:barrier}. For given $s$, the time translated flow $L^a_{t+s}$ 
corresponds to the rescaled flow
\[ \label{eq:Mas} M^{a,s}_\tau = (1 - e^\tau s)^{1/2} M^a_{\tau - \log (1 - e^\tau s)}. \]
If $a$ is sufficiently small, then for
$\tau\in [-1/2, 1/2]$ the flow $M^a_\tau$ is still
$\delta_0^{1/2}$-graphical over $\mathcal{C}$ on
$B_{\delta_0^{-1/2}}$, and so for sufficiently small $s$ the surface
$M^{a,s}_0$ is the graph of a function $v_s$ over $M^a_0$ with $v_s$
having the opposite sign of $s$, and $|v_s| > s/2$ on
$B_{\delta_0^{-1/2}}$ (actually $v_s \sim -\frac{\sqrt{2}}{2} s$).

If we choose $s=2|a'-a|R_0$, then \eqref{eq:73} implies that on
$B_{R_0}$, the surface $M^{a'}_0$ lies between $M^{a,\pm
  s}_0$, while outside of $B_{R_0}$ it is of distance at most
$\Lambda_0\epsilon |a'-a|$ from $M^a_0$. Rephrasing this in terms of
the original flows, we have that on a certain ball $\tilde{B}_{\Lambda_0^{-1}R_0}$,
the surface $S'^{a'}_{t_2}$ is contained between $S'^a_{t_2\pm
  \Lambda_0^{-2}s}$, while outside of $\tilde{B}_{\Lambda_0^{-1}R_0}$
the surface $S'^{a'}_{t_2}$ is contained in the $\epsilon
|a'-a|$-neighborhood of $S'^a_{t_2}$. Note that a subset of the
complement of $\tilde{B}_{\Lambda_0^{-1}R_0}$ will generally be
contained in the region $U_1$. However, since the mean
curvature is strictly positive there, it still follows that if we
choose $\epsilon$ sufficiently small, then $S'^{a'}_{t_2}$ will be
contained between $S'^a_{t_2\pm \Lambda_0^{-2}s}$ on all of
$U_1$. Choosing $\epsilon$ even smaller if necessary we can
also ensure that $S'^{a'}_{t_2}$ is in the $\sigma |a'-a|$-neighborhood
of $S'^{a}_{t_2}$ outside of
$\mathcal{U}_1$. Applying Proposition~\ref{prop:barrier} we find that
for $t\in [t_2, t_3]$ the surface $S'^{a'}_t$ lies
between $S'^a_{t\pm \Lambda_0^{-2}s}$ on $U_1$.

Scaling this back to the rescaled flows, we find that on the balls
$B_{R(\tau)}$ the surface $M^{a'}_\tau$ lies between the surfaces
$M^{a, \pm s}_\tau$. As long as
$e^\tau s = R_0R(\tau)|a'-a|e^\tau$ is sufficiently small, by
Lemma~\ref{lem:geomJacobi2} we then find that $M^{a'}_\tau$ is the
graph of a function $v$ over $M^a_\tau$ on the ball $B_{R(\tau)+10}$,
with $|v| < CR_0R(\tau)|a'-a|e^\tau$, for a uniform constant $C$. This
estimate can then be used when applying
Proposition~\ref{prop:rescaledperturb}. 
\end{remark}

We will next show, roughly speaking, that
if $L^a_t$ has a degenerate cylindrical 
singularity at some point with $y$-coordinate $y_0$, then for $a'$
close to $a$ the flow $L^{a'}_t$ cannot 
have a cylindrical singularity with $y$-coordinate $y_0'$, if
$|y_0-y_0'| < |a-a'|^{3/4}$. For this, we will also consider small
 translations in space and time of the
 flows $L^a_t$. Let $\alpha\in \mathbb{R}$ and $\Phi =
 (\beta_1, \beta_2, s)$, where 
 $\beta_1,\beta_2, s\in \mathbb{R}$. We write $|\Phi| < \epsilon$ if
 \[ |\beta_1| + |\beta_2| + |s|^{1/2} < \epsilon. \]
 We define the transformed flow
 \[ L^{a, \alpha, \Phi}_t = L^a_{t+s} + (\alpha, \beta_1,
   \beta_2). \]
 Note that translation in the $y$-direction plays a special
 role since $\mathcal{C}$ is invariant under it, so we deal with it
 separately with the parameter $\alpha$ in the notation. 
 The key result is the following. Recall here that, as in the
 discussion after Proposition~\ref{prop:barrier}, $L_t$ is a suitable
 translation, rotation and scaling of the flow $S'_t$, which in turn
 is a very small perturbation of $S_t$. The perturbation $L^a_t$ is defined by
 taking the graph of $ah$ over $S'_{t_1}$, where $h$ is as in
 \eqref{eq:hdefn1}. 
 \begin{prop}\label{prop:localperturb}
   Suppose that $R_0$ is sufficiently large, and $\delta_0$ as well as
   $\omega$ in the definition of $h$ is sufficiently small. Assume that $L_{-2}$ is
   $\delta_0$-graphical over $\sqrt{2}\mathcal{C}$ on
   $B_{\delta_0^{-1}}$. There is an $\epsilon_0 > 0$ depending on the
   original flow $S_t$, and an $\epsilon_1 < \epsilon_0$ depending on
   the choice of 
   $L_t$ and $h$ such that we have the following: suppose that $a, a', 
   \alpha, \alpha', \Phi, \Phi'$ are such that
   \[ |a|, |a'| < \epsilon_1, \text{ and }\,
     |\alpha| , |\alpha'|, |\Phi|, |\Phi'| < \epsilon_0, \]
   and the flows $L^{a,\alpha, \Phi}_t$ and $L_t^{a', \alpha',
     \Phi'}$ have degenerate cylindrical singularities at
   $(0,0)$. Then $|\alpha - \alpha'| \geq |a-a'|^{3/4}$. 
 \end{prop}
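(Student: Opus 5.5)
We argue by contradiction: suppose $|\alpha-\alpha'| < |a-a'|^{3/4}$. We will apply Proposition~\ref{prop:rescaledperturb}, with the reference flow $M_\tau$ taken to be the rescaled flow of $L^{a,\alpha,\Phi}_t$ centered at $(0,0)$. This flow has a degenerate $\mathcal{C}$-singularity at infinity by assumption. The second flow $M'_\tau$ is the rescaled flow of $L^{a',\alpha,\Phi'}_t$, i.e. the second flow translated in $y$ by $\alpha-\alpha'$, so that its $y$-translate by $\alpha'-\alpha$ is $L^{a',\alpha',\Phi'}_t$.

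\textbf{Setting up the hypotheses.} Since $|\alpha|,|\Phi|<\epsilon_0$ and $L_{-2}$ is $\delta_0$-graphical over $\sqrt2\mathcal{C}$, both $M_0$ and $M'_0$ are $\delta$-graphs over $\mathcal{C}$ on $B_{\delta^{-1}}$, with $\delta$ small once $\delta_0,\epsilon_0,\epsilon_1$ are small. This uses the $y$-translation invariance of $\mathcal{C}$ and Lemma~\ref{lem:geomJacobi2} for the small time and transverse shifts. Set $\epsilon=|a-a'|$. The first hypothesis of Proposition~\ref{prop:rescaledperturb}, graphicality with $|v|\le R_0R(\tau)\epsilon e^\tau$ on $B_{R(\tau)}$, comes from the barrier argument of Remark~\ref{rem:barrier}. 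Proposition~\ref{prop:approxy} gives that $S'^{a'}_{t_2}$ is a graph of $(a'-a)\tilde f+O(\epsilon|a'-a|)$ over $S'^a_{t_2}$, so it lies between time translates $S'^a_{t_2\pm\Lambda_0^{-2}s}$ with $s\sim R_0|a-a'|$. Proposition~\ref{prop:barrier} propagates this to $[t_2,t_3]$. The extra spacetime shift $\Phi$ versus $\Phi'$ and the $y$-shift are handled the same way, but this requires an additional comparison. The two flows are centered at different singular points, and the differences $\Phi-\Phi'$ are not a priori controlled by $|a-a'|$.

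\textbf{Handling $\Phi-\Phi'$.} I expect this to be the main obstacle. I would first try to show that $|\Phi-\Phi'|$ is small relative to $\epsilon$. Both flows have a $\mathcal{C}$-singularity at the origin, so their rescaled flows converge to $\mathcal{C}$. If the time shift or transverse shift differed by more than $C R_0\epsilon$, Lemma~\ref{lem:geomJacobi} implies the difference of rescaled flows would contain a nontrivial component $be^\tau+(c_1z_1+c_2z_2)e^{\tau/2}+\dots$ dominating the $O(\epsilon e^\tau)$ perturbation. That would force one of the flows to leave a neighborhood of $\mathcal{C}$, by Proposition~\ref{prop:noC}, contradicting convergence. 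Thus we may assume $|\beta_i-\beta_i'|\lesssim \epsilon$ and $|s-s'|\lesssim\epsilon$, after absorbing rotations via a $y$-axis-orthogonal $Q$. The combined perturbation then still satisfies the bound $|v|\le R_0R(\tau)\epsilon e^\tau$, with $R_0$ enlarged.

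\textbf{Verifying the lower bounds and concluding.} Next we check the two lower bounds. Because $v(\cdot,0)\approx (a'-a)\chi_{R_0}y$ plus shift contributions, we get $\|v(0)\|_{L^2(B_{R(0)})}\ge c_1\epsilon$. The growth $\|v(1)\|\ge e^{1/2-\kappa_4}\|v(0)\|$ follows from a compactness argument. Normalized by $\epsilon$, $v$ converges to a solution of $\partial_\tau u=\mathcal{L}_{\mathcal{C}}u$ with initial data $\chi_{R_0}y$ plus Jacobi fields $be^\tau+c\cdot z e^{\tau/2}$. Since $\chi_{R_0}y$ evolves essentially as $ye^{\tau/2}$ for $R_0$ large, and the other modes grow at least as fast, the growth rate at $\tau=1$ is at least $e^{1/2-\kappa_4}$. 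With these in place, Proposition~\ref{prop:rescaledperturb} says $L'_t$ has no cylindrical singularity at $(y,0,0,0)$ for $|y|<\epsilon^{2/3}$. But $L^{a',\alpha',\Phi'}$ is the $y$-translate of $L'_t$ by $\alpha'-\alpha$ and has a singularity at the origin. Hence $|\alpha-\alpha'|\ge\epsilon^{2/3}>\epsilon^{3/4}$ for small $\epsilon$, contradicting our assumption.
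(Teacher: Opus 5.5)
The overall architecture is the paper's: reference flow from $L^{a,\alpha,\Phi}$, comparison flow from $L^{a',\alpha,\Phi'}$, the hypotheses of Proposition~\ref{prop:rescaledperturb} checked via Remark~\ref{rem:barrier}, Lemma~\ref{lem:geomJacobi} and Lemma~\ref{lem:geomJacobi2}, and the final comparison of $2/3$ with $3/4$. The gap is in the step you flagged, the reduction to $|\Phi-\Phi'|\lesssim|a-a'|$.

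\textbf{Why the reduction fails.} That bound does not follow from both flows being singular at the origin. For $a=a'$, a single flow $L^a$ can have an arc of degenerate singularities (a marriage-ring-type collapse). This gives admissible pairs with $\Phi\neq\Phi'$ and $|a-a'|=0$. Your justification never uses the smallness of $|\alpha-\alpha'|$, so it would also prove this false statement. The mechanism breaks for two reasons.
\begin{itemize}
\item The flow that leaves a neighborhood of $\mathcal{C}$ is $M'_\tau$, centered at $y$-position $\alpha$, and nothing requires it to converge. Only its translate $M'_\tau+e^{\tau/2}(\alpha'-\alpha,0,0)$ is supposed to converge.
\item Proposition~\ref{prop:noC} cannot be applied near $\tau=0$. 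There $\mathbf{d}_{\mathcal{C}}$ is dominated by the distance of $M_\tau$ from $\mathcal{C}$, not by the perturbation. You must first propagate the growth with Proposition~\ref{prop:3ann} until the perturbation dominates, at time $T^\epsilon$, and then control small $y$-translates.
\end{itemize}
That is exactly the content of Proposition~\ref{prop:rescaledperturb}. Inside your contradiction framework, the case $|\Phi-\Phi'|\gg|a-a'|$ can only be disposed of by running Proposition~\ref{prop:rescaledperturb} with perturbation size $\sim|\Phi-\Phi'|$ and using $|\alpha-\alpha'|<|a-a'|^{3/4}$. At that point the reduction is redundant.

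\textbf{The paper's route.} The paper never bounds $\Phi-\Phi'$. It takes the perturbation size to be $\gamma=|a-a'|+|\Phi-\Phi'|$.
\begin{itemize}
\item To leading order, $v(\cdot,0)=(a'-a)y+F_{\Phi'-\Phi}$, where $F_{\Phi'-\Phi}$ is a combination of $1,z_1,z_2$.
\item The functions $y,1,z_1,z_2$ are mutually orthogonal in the Gaussian $L^2$ and all have eigenvalue at least $1/2$. This gives $\Vert v(0)\Vert\ge C^{-1}\gamma$ and $\Vert v(1)\Vert\ge e^{1/2-\kappa}\Vert v(0)\Vert$, with no cancellation between the $a$-part and the shift part. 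Your own lower-bound step (``plus shift contributions'') and growth step (``other modes grow at least as fast'') silently need this orthogonality too.
\item The barrier argument together with Lemma~\ref{lem:geomJacobi2} gives $|v|\le CR_0R(\tau)\gamma e^\tau$.
\item Applying Proposition~\ref{prop:rescaledperturb} with $\epsilon=C\gamma$ gives directly
$|\alpha-\alpha'|\ge(C\gamma)^{2/3}\ge\gamma^{3/4}\ge|a-a'|^{3/4}$, using only $\gamma\ge|a-a'|$.
\end{itemize}
Absorb the constant this way rather than by ``enlarging $R_0$'', which would change $R(\tau)$ and constants already fixed. No contradiction argument or case split is needed.
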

 \begin{proof}
   Applying a rotation $Q$ close to the identity, we can assume that $QL_t^{a,\alpha, \Phi}$ has
   a degenerate $\mathcal{C}$-singularity at $(0,0)$, while
   $QL_t^{a',\alpha', \Phi'}$ has a cylindrical singularity (modeled on
   a rotation of   $\mathcal{C}$) at $(0,0)$. 
   Let $M_\tau$ and $M'_{\tau}$ be the corresponding rescaled flows, centered at
   $(0,0)$, corresponding to $QL^{a,\alpha, \Phi}_t$ and $QL^{a',
     \alpha, \Phi'}_t$ (note that we are using $\alpha$ in both). 
   For any $\delta_1 > 0$, if  $\delta_0, \epsilon_0$ are small enough, then
   we can assume that both $M_\tau$ and $M_\tau'$ are
   still $\delta_1$-graphical over $\mathcal{C}$ on
   $B_{\delta_1^{-1}}$, for $\tau\in [-\frac{1}{2}, 1]$.  Writing
   $\gamma := |a-a'| + |\Phi - \Phi'|$, our goal   is to show that
   the flow $L^{a', \alpha, \Phi'}_t$  cannot have a cylindrical
   singularity at $(y,0,0,0)$ with $|y| <
   \gamma^{3/4}$. For this we will use
   Proposition~\ref{prop:rescaledperturb}.

   We are assuming that $M_\tau$ has a degenerate cylindrical singularity at
   infinity, so Proposition~\ref{prop:graphicality} applies. For some
   range of $\tau$, we can write $M'_\tau$ as the graph of $v(x,\tau)$ over $M_\tau$ on
   $B_{R(\tau)}$, where $R(\tau)$ is defined as in
   \eqref{eq:Rtaudefn}, for $\tau\in [-\frac{1}{2}, \infty)$.
   We claim the following properties:

   \begin{itemize}
     \setlength{\itemsep}{10pt}
   \item[{\bf Claim 1:}] For a uniform constant $C_3 > 0$ we have
   $\Vert v(x,0)\Vert_{L^2(B_{R(0)})} \geq C_3^{-1}\gamma$, if
 $\epsilon_0, \delta_0, \omega, R_0^{-1}$ are small enough.  To see this, note that by
 Proposition~\ref{prop:approxy} and 
 Lemma~\ref{lem:geomJacobi}, to leading order
 on the ball $B_{R(0)/2}$, we can write the surface $L^{a', \alpha,
   \Phi'}_{-1}$ as the graph of
 \[ \label{eq:51} (a'-a)  y + F_{\Phi' - \Phi} \]
 over $L^{a, \alpha, \Phi}_{-1}$, where $F_{\Phi' - \Phi}$ is a
 combination of the eigenfunctions $1, z_1, z_2$ of
 $\mathcal{L}_{\mathcal{C}}$ 
 corresponding to translations in time and space (in the
 $z_1,z_2$-directions), given by the difference
 of $\Phi'$ and $\Phi$. Since these eigenfunctions are mutually
 $L^2$-orthogonal, and also orthogonal to the function $y$, we get
 the lower bound for $\Vert v(x,0)\Vert_{L^2(B_{R(0)})} \geq C_3^{-1}\gamma$.
 
 \item[{\bf Claim 2:}] Given any $\kappa > 0$, once $\epsilon_0, \delta_0, R_0^{-1}$ are
   small enough, we have
   \[ \Vert v(x, 1)\Vert_{L^2(B_{R(1)})} \geq e^{\frac{1}{2} -
       \kappa} \Vert v(x,0)\Vert_{L^2(B_{R(0)})}. \]
   This also follows from the discussion above, showing that to
   leading order $v(x,0)$ is given by a combination of eigenfunctions
   as in \eqref{eq:51}. We obtain the claimed growth estimate using
   that these eigenfunctions 
   of $\mathcal{L}_{\mathcal{C}}$ all have eigenvalues at least
   $1/2$ and are mutually $L^2$-orthogonal. 
   
   \item[{\bf Claim 3:}] If $\epsilon_1$ is also sufficiently small,
     depending on $L_t$, then for a uniform constant $C > 0$ we have
   that $|v(x,\tau)| \leq CR_0R(\tau)\gamma e^\tau$, as long as $CR_0R(\tau)\gamma
   e^\tau < 100^{-1}$. To see this, we first use
   Proposition~\ref{prop:barrier} following the discussion in
   Remark~\ref{rem:barrier}
   to compare the rescaled flow
   $N_\tau'$ corresponding to $L^{a',\alpha,\Phi}_t$ to the
   rescaled flow $M_\tau$ corresponding to $L^{a,\alpha,\Phi}_t$
   (i.e. we only change $a$ to $a'$). Let us write
   \[ \Phi &= (\beta_1,\beta_2,s), \\
       \Phi' &= (\beta_1', \beta_2', s'). \]
     As in Remark~\ref{rem:barrier}, we have that if $\epsilon_1$ is
     sufficiently small (depending on $L_t$), then $L^{a'}_{-1}$ is the
     graph of a function $u$ over $L^a_{-1}$, satisfying
     \eqref{eq:73}. 
     It follows that if $\gamma$ is sufficiently small, then
     $L^{a', \alpha, \Phi}_{-1-s}$ is the graph of a
     function $\tilde{u}$ over $L^{a,\alpha, \Phi}_{-1-s}$ satisfying
     \[ |\tilde{u} - (a'-a)\chi_{R_0}y| \leq 2\Lambda_0 \epsilon
       \gamma. \]
     We can now argue exactly as in Remark~\ref{rem:barrier} to deduce
     that as long as $R_0\gamma e^\tau$ is sufficiently small,
     $N'_\tau$ is the graph of $\tilde{u}$ over $M_\tau$ on
     $B_{R(\tau)+6}$, with $|\tilde{u}| < CR_0R(\tau)\gamma e^\tau$. 

   Let us now relate $M'_\tau$ to $N'_\tau$. For this note that
   $M'_\tau$ is the rescaled flow corresponding to
   \[ L^{a',\alpha,\Phi'}_t = L^{a'}_{t+s'} + (\alpha, \beta_1',
     \beta_2'), \]
   while $N'_\tau$ corresponds to
   \[ L^{a', \alpha,\Phi}_t = L^{a'}_{t+s} + (\alpha,
     \beta_1,\beta_2). \]
   We have
   \[ L^{a',\alpha, \Phi'}_t =  L^{a', \alpha,
       \Phi}_{t+(s'-s)} + (0, \beta_1'-\beta_1, \beta_2'-\beta_2), \]
   and so if we write $x_0 = (0, \beta_1'-\beta_1, \beta_2'-\beta_2)$, we have
   \[ M'_\tau  = (1 - e^\tau(s'-s))^{1/2} N'_{\tau -
       \log(1-e^\tau(s'-s))} + e^{\tau/2}x_0. \]
   Using Lemma~\ref{lem:geomJacobi2}, we find that as long as
   \[ A := R(\tau) e^\tau|s'-s| + e^{\tau/2}|x_0| \]
   is sufficiently small, we can write $M'_\tau$ as the graph of $u'$
   over $N'_\tau$ on $B _{R(\tau)+6}$, with $|u'| < CA$. 

   By our assumptions we have $A < 2e^\tau\gamma$. So if $CR_0 R(\tau)\gamma
   e^\tau < 100^{-1}$ for large $C$, then we will have $|u'| <
   CR_0R(\tau) \gamma e^\tau$. Combining this with the estimate for $\tilde u$ above,
   and increasing $C$, we get the required estimate for $v(x,\tau)$. 
 \end{itemize}
 
  These are the assumptions that we need in order to apply
   Proposition~\ref{prop:rescaledperturb}, if we set $\epsilon =
   C\gamma$. For sufficiently small
   $\delta_0$ we find that the flow $L^{a', \alpha, \Phi'}_t$ does not
   have any cylindrical singularity at $(y,0,0,0)$ with $|y| <
   (C\gamma)^{2/3}$. So if $L^{a', \alpha', \Phi'}_t$ has a
   cylindrical singularity (whether degenerate or not), then
   $|\alpha'-\alpha| \geq (C\gamma)^{2/3} \geq |a-a'|^{3/4}$, if
   $\gamma$ is small enough. 
 \end{proof}
 
 \begin{prop}\label{prop:localperturb2}
   Suppose that $L_t$ has a $\mathcal{C}$-singularity at $(0,0)$, and
   we are in the setting of Proposition~\ref{prop:localperturb}. Then
   for the $\epsilon_0 > 0$ from the Proposition, which depends on the
   flow $S_t$, we have the following. 
   For a dense set of choices of the 
   parameter  $a$ near $0$, the perturbed flows
   $L^a_t$ have no degenerate cylindrical singularity in the parabolic
   $\epsilon_0$-neighborhood of $(0,0)$.
 \end{prop}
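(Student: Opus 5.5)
The plan is a covering (measure) argument built on Proposition~\ref{prop:localperturb}: I will show that the set of bad parameters has Lebesgue outer measure zero in $(-\epsilon_1,\epsilon_1)$, which gives density of the good parameters at once. Let
\[ \mathcal{B} = \{ a\in (-\epsilon_1,\epsilon_1)\,:\, L^a_t \text{ has a degenerate cylindrical singularity in the parabolic } \epsilon_0\text{-neighborhood of } (0,0)\}. \]
It is enough to show that $\mathcal{B}$ has outer measure zero. Then its complement is dense in $(-\epsilon_1,\epsilon_1)$, and this is the claimed density of good parameters near $0$. I would work with outer measure throughout, so that no measurability of $\mathcal{B}$ is needed.

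\emph{Step 1: encoding each bad parameter.} For every $a\in\mathcal{B}$ I choose one degenerate cylindrical singular point $(y,z_1,z_2,t)$ of $L^a_t$ in the $\epsilon_0$-neighborhood of $(0,0)$; the choice is arbitrary if there are several. I then set
\[ \alpha(a)=-y,\qquad \Phi(a)=(-z_1,-z_2,-t). \]
With these choices $L^{a,\alpha(a),\Phi(a)}_t$ has a degenerate cylindrical singularity at $(0,0)$. Moreover $|\alpha(a)|<\epsilon_0$ and $|\Phi(a)|<\epsilon_0$, since by definition $|\Phi|$ involves $|s|^{1/2}$, which matches the parabolic neighborhood.

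The limiting cylinder at this point is a rotation $Q\mathcal{C}$ with $Q$ close to the identity. This requires $\delta_0$ small, so that $L_t$ is close to $\sqrt{-t}\,\mathcal{C}$ on the relevant region; the proof of Proposition~\ref{prop:localperturb} already accommodates such rotations. Applying Proposition~\ref{prop:localperturb} to any pair $a\neq a'$ in $\mathcal{B}$ then gives
\[ |\alpha(a)-\alpha(a')|\geq |a-a'|^{3/4}. \]
In particular $\alpha:\mathcal{B}\to(-\epsilon_0,\epsilon_0)$ is injective, and its inverse satisfies $|a-a'|\le|\alpha(a)-\alpha(a')|^{4/3}$. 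So the inverse is H\"older continuous with exponent $4/3>1$.

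\emph{Step 2: the covering estimate.} Fix $N$ and split $(-\epsilon_0,\epsilon_0)$ into $N$ intervals $I_j$ of length $2\epsilon_0/N$. Each set $\alpha^{-1}(I_j)$ has diameter at most $(2\epsilon_0/N)^{4/3}$, so it is contained in an interval of that length. Summing over $j$, the outer measure of $\mathcal{B}$ is at most
\[ N(2\epsilon_0/N)^{4/3}=(2\epsilon_0)^{4/3}N^{-1/3}\to 0 \quad\text{as } N\to\infty. \]
Hence $\mathcal{B}$ is null.

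The same argument applies to $\mathcal{B}\cap J$ for any subinterval $J$ of $(-\epsilon_1,\epsilon_1)$. So every interval of parameters near $0$ contains good values of $a$, as required.

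\emph{Main obstacle.} The delicate point is Step 1: checking that each degenerate singular point of $L^a_t$ in the $\epsilon_0$-neighborhood really falls under the hypotheses of Proposition~\ref{prop:localperturb}. This needs uniform smallness of the transformations: $|\alpha|,|\Phi|<\epsilon_0$ together with a rotation $Q$ close to the identity, all with $\epsilon_0$ depending only on $S_t$. This is exactly how $\epsilon_0$ was chosen in that proposition. It is also why the dependence of $\epsilon_1$ on $L_t$ and $h$ is harmless here: $\epsilon_1$ only restricts the range of $a$, and does not affect the size $\epsilon_0$ of the neighborhood in which singularities are excluded.
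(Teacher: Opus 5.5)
Your proposal is correct and follows the paper's proof: both show that the set of bad parameters is Lebesgue-null using the separation $|\alpha-\alpha'|\geq |a-a'|^{3/4}$ from Proposition~\ref{prop:localperturb}. The paper counts a maximal $\kappa$-separated family in $a$, while you partition the $\alpha$-range and use the $4/3$-H\"older inverse, which amounts to the same estimate. (One harmless slip: since $L^{a,\alpha,\Phi}_t=L^a_{t+s}+(\alpha,\beta_1,\beta_2)$, the time shift should be $s=t$ rather than $-t$.)
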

 \begin{proof}
   We take $\epsilon_0, \epsilon_1, \delta_0$ to be the constants from
   Proposition~\ref{prop:localperturb}. Let us define
   \[ \mathcal{D} = \{(a, \alpha)\,&:\, |a| < \epsilon_1, |\alpha| <
     \epsilon_0, L^{a,\alpha, \Phi}_t\, \text{ has a degenerate
     } \\
     & \qquad \text{cylindrical singularity at } (0,0) \text{ for some
       } |\Phi| < \epsilon_0\}.\]
   Let us also write $\mathcal{S} = \{a\,:\, (a,\alpha)\in \mathcal{D}
   \text{ for some }\alpha\}$ for the projection of $\mathcal{D}$ onto
   the $a$-component.

   Let $0 < \kappa < \epsilon_1$, and consider a maximal set of pairs $(a_i,
   \alpha_i) \in \mathcal{D}$ such that $|a_i - a_j| > \kappa$ for
   $i\not=j$. Then by Proposition~\ref{prop:localperturb} we have
   $|\alpha_i - \alpha_j| > \kappa^{3/4}$, and so
   there can be at most $\kappa^{-3/4}$ distinct pairs. It follows
   that $\mathcal{S}$ can be covered with $\kappa^{-3/4}$ intervals
   of length $2\kappa$, so the measure of $\mathcal{S}$ satisfies
   $\mu(\mathcal{S})\leq 2\kappa^{1/4}$. Since $\kappa$ can be
   arbitrarily small, it follows that $\mathcal{S}$ has measure zero, and so
   for a dense set of parameters $a$, the flow $L^a_t$ has no
   degenerate cylindrical singularities in the parabolic
   $\epsilon_0$-neighborhood of the origin. 
 \end{proof}

 We can now prove our main results.

 \begin{proof}[Proof of Theorem~\ref{thm:main}]
     We set $R_0, \delta_0$ as in
     Proposition~\ref{prop:localperturb}. Recall that we can assume
     that $S_t$ has only spherical and cylindrical singularities. The
     set $\Sigma^{deg}$ of all degenerate cylindrical singularities of
     $S_t$ is closed, since spherical and nondegenerate cylindrical
     singularities are isolated in spacetime (see \cite{SWX25}). Let us say that a point
     $(x_1,t_1)$ is $\delta$-cylindrical at scale $r$, if there is a
     rotation $Q$ such that
     \[ Q r^{-1} (S_{t_1 - r^2} - x_1) \]
     is $\delta$-graphical over $\mathcal{C}$ on
     $B_{\delta^{-1}}$. From Colding-Minicozzi~\cite{CM15_1} we know
     that for any $\delta > 0$ there is a $\delta' > 0$ such that if
     $(x_1,t_1)\in \Sigma^{deg}$ is $\delta'$-cylindrical at scale $r'$, then it is
     also $\delta$-cylindrical at all scales $r < r'$.
     
     We claim that given any $\delta_1 > 0$, there is a
     small number $r_1 > 0$ such that if $(x_1,t_1)$ is a degenerate
     cylindrical singularity, then it is $\delta_1$-cylindrical at
     all scales $r < r_1$. To see this we can argue by
     contradiction. If it were not true, then we would have a sequence
     $(x_k, t_k)$, and corresponding $r_k\to 0$ such that the $(x_k,
     t_k)$ are not $\delta_1$-cylindrical at scale $r_k$.
     Up to choosing a subsequence the $(x_k, t_k)$
     converge to a (degenerate) cylindrical singularity $(x_\infty,
     t_\infty)$. For a given $\delta_2 > 0$, there is some $r_\infty >
     0$ such that $(x_\infty, t_\infty)$ is $\delta_2$-cylindrical at
     all scales $r < r_\infty$, and from this it follows that
     for sufficiently large $k$, the $(x_k, t_k)$ are also
     $2\delta_2$-cylindrical at scale $r_\infty/2$. If $\delta_2$ is
     chosen sufficiently small (depending on $\delta_1$), then by
     \cite{CM15_1} this implies that for all $r \leq 
     r_\infty/2$ the $(x_k, t_k)$ are $\delta_1$-cylindrical at
     scale $r$, contradicting our assumption.

     Assuming now that all degenerate singularities of $S_t$ are
     $\delta_1$-cylindrical at scales $r < r_1$, it follows
     that if the flow $S'_t$ is sufficiently close to $S_t$,
     then all degenerate singularities of $S'_t$ are
     $\delta_1/2$-cylindrical at scale $r_1/2$. By choosing $\delta_1$
     sufficiently small, depending on $\delta_0$, we can therefore
     assume that the degenerate cylindrical singularities of all flows
     $S'_t$ that are sufficiently close to $S_t$ are
     $\delta_0/2$-cylindrical at scale $r$ for all $r \leq r_1/2$. 

     We also recall the constant $\gamma$ from
     Proposition~\ref{prop:barrier}, which can be chosen uniformly for
     all $T$, for all flows $S'_t$ that are sufficiently close to
     $S_t$. Shrinking $r_1$ we can assume that
     $(r_1/2)^2 < \gamma$. We now argue as follows, inductively perturbing
     away the degenerate singularities. We fix a small $\kappa > 0$,
     and $S^i_t$ will be a perturbation of $S^{0}_t := S_t$ such that the
     initial condition  $S^{i}_0$ is $2^{-i}\kappa$-graphical over
     $S^{i-1}_0$. As long as $\kappa$ is sufficiently small, this will
     imply that all $S^i_t$ are sufficiently close to $S_t$ for the
     estimate on the cylindrical scale above, as well as 
     the barrier construction in Proposition~\ref{prop:barrier}, to
     hold for them.

     Let us suppose that $S^i_t$ is smooth up to a time $T_i > 0$. Let
     us consider the set $\Sigma$ of degenerate cylindrical
     singularities of $S^i_t$ in the interval $[T_i, T_i +
     (r_1/10)^2]$ for the $r_1$ found above. We can cover $\Sigma$ with
     a finite collection of parabolic neighborhoods $Q_{\epsilon_0 r_1/10}(x_k,
     t_k)$, where the $(x_k, t_k)$ for $k=1,\ldots, N$ are $\delta_0/2$-cylindrical
     degenerate singularities at scale $r_1/2$ and
     $\epsilon_0 >0$ is the constant from
     Proposition~\ref{prop:localperturb2}. To perturb
     away these singularities we inductively construct perturbations
     $S^{i,k}_t$, satisfying
     \begin{enumerate}
       \item the initial condition $S^{i,k}_0$ is $2^{-i-1}2^{-k}\kappa$-graphical
     over $S^{i,k-1}_0$, and $S^{i,0}_t = S^i_t$, 
       \item $S^{i,k}_t$ is still smooth up to time $T_i$, and it has
         no degenerate cylindrical singularities outside of
         $Q_{\epsilon_0 r_1/10}(x_j, t_j)$ for $j=k+1, \ldots, N$.
       \item any degenerate cylindrical singularity of $S^{i,k}_t$ is
         still $\delta_0/2$-cylindrical at scale $r_1/2$. 
       \end{enumerate}
      Note that the property (3) is automatic from (1) since all the
      flows we construct are still close enough to $S_t$ for the
      estimate on the cylindrical scale to hold. Assuming that we have
      constructed $S^{i,k}_t$, if there are no degenerate cylindrical
      singularities in $Q_{\epsilon_0 
        r_1/10}(x_{k+1}, t_{k+1})$, we let $S^{i,k+1}_t =
      S^{i,k}_t$. Otherwise let $(x_{k+1}', t_{k+1}')$ be 
      such a singularity. It is $\delta_0/2$-cylindrical at scale
      $r_1/2$, and crucially $t_{k+1} - (r_1/2)^2 < T_i$, since we
      have $t_{k+1} \leq T_i + (r_1/10)^2 + (\epsilon_0 r_1/10)^2$ (and
      $\epsilon_0 < 1$).  Since $S^{i,k}_t$ is smooth for $t\in
      [0,t_{k+1} - (r_1/2)^2]$, Proposition~\ref{prop:localperturb2} implies
      that we can find an arbitrarily small perturbation $S^{i,k+1}_0$
      of $S^{i,k}_0$ such that the flow $S^{i, k+1}_t$ 
      has no degenerate cylindrical singularities in
      $Q_{\epsilon_0 r_1/2}(x_{k+1}', t_{k+1}')$. This implies that
      there are no degenerate cylindrical singularities in
      $Q_{\epsilon_0 r_1/10}(x_{k+1}, t_{k+1})$. Since we can take the
      perturbation $S^{i,k+1}_t$ as small as we like, and
      nondegenerate cylindrical singularities cannot become degenerate under a
      small perturbation, we can arrange that (1), (2), (3) hold for
      $S^{i,k+1}_t$.

      We have that $S^{i,N}_t$ is smooth up to time $T_i$, and has
      no degenerate cylindrical singularities in the time interval $[T_i, T_i +
      (r_1/10)^2]$. We define $S^{i+1}_t = S^{i,N}_t$. If $S^{i+1}_t$
      has some singularity before time $T_i + (r_1/10)^2$, then this
      singularity must be spherical or nondegenerate cylindrical, so
      we have found a perturbation of $S_t$ with only spherical or
      nondegenerate cylindrical singularities at the first singular
      time.

      If instead $S^{i+1}_t$ is smooth up to time $T_{i+1} := T_i + (r_1/10)^2$,
      then we can repeat the previous construction to obtain
      $S^{i+2}_t$. Since the time interval is extended by a fixed
      amount $(r_1/10)^2$ each time, after a finite number of steps
      the $S^i_t$ must have a singularity before time $T_{i+1}$ for
      some $i$, and this singularity must be spherical or
      nondegenerate cylindrical. 
 \end{proof}

 \begin{proof}[Proof of Theorem~\ref{thm:surgery}]
   We argue somewhat similarly to the proof of Theorem~\ref{thm:main},
   the main difference is that we do not try to use initial
   perturbations of the flow, and as a result the flows $S^i_t$ that
   we define inductively will not satisfy the mean curvature flow
   equation at all times.

   Let us define $r_1$ as in the previous proof, and let $T_i = i
   (r_1/10)^2$.  Fix a small $0 < \kappa \ll r_1^2$ which will control the size of our
   perturbation. We define $S^i_t$ inductively, satisfying the
   following:
   \begin{enumerate}
   \item For $j=1, \ldots, i-1$ there are ``surgery times''
     \[ \label{eq:tildeT} \tilde{T}_j \in \left[T_j - \frac{r_1^2}{40}, T_j -
         \frac{r_1^2}{60}\right], \]
     such that $S^i_t$ is a mean curvature flow with only spherical
     and nondegenerate cylindrical singularities on
     \[ [0, T_i] \setminus \{\tilde{T}_1, \ldots, \tilde{T}_{i-1}\}. \]
     \item For some $\kappa_j > 0$ the flow $S^i_t$ is smooth on the
       two intervals $(\tilde{T}_j - \kappa_j, \tilde{T}_j)$ and
       $(\tilde{T}_j, \tilde{T}_j + \kappa_j)$, for $j=1, \ldots, i-1$. Both of these extend
       smoothly to $\tilde{T}_j$, and we denote the two one-sided
       limits by $S^i_{\tilde{T}_j,\pm}$. Then for
       some $A_j > 0$ we have that the second fundamental form of
       $S^i_{\tilde{T}_j, -}$ satisfies $|A| < A_j$, and
       $S^i_{\tilde{T}_j,+}$ is $A_j^{-1}\kappa$-graphical over
       $S^i_{\tilde{T}_j,-}$. 
     \item On $[T_{i-1}, T_{ext}+1]$, $S^i_t$ is a (weak) mean
       curvature flow that is $(1-2^{-i})\kappa$-close to $S_t$ under
       a metrization of the convergence of Brakke flows.  In addition
       $S^i_t$ agrees with $S^{i-1}_t$ for $t\in [0, \tilde{T}_{i-1}-\kappa_{i-1}]$. 
   \end{enumerate}

   Decreasing $r_1$ if necessary, we can assume that $S_t$ is smooth
   until time $T_1$, and define $S^1_t = S_t$. Suppose that we have
   already defined $S^i_t$,
   together with suitable $\tilde{T}_j, \kappa_j$. Since $S^i_t$ has
   only spherical and nondegenerate cylindrical singularities on
   $[T_{i-1}, T_i]$, and these singularities are isolated, we can choose
   $\tilde{T}_i$ satisfying \eqref{eq:tildeT}, and $\kappa_i$ such that $S^i_t$ is smooth on
   $I_i = [\tilde{T}_i - \kappa_i, \tilde{T}_i + \kappa_i]$. 

   We will now use Proposition~\ref{prop:localperturb2} similarly to
   the proof of Theorem~\ref{thm:main} above to construct
   an arbitrarily small perturbation $\tilde{S}^i_{\tilde{T}_i}$ of the time slice
   $S^i_{\tilde{T}_i}$, so that the corresponding flow $\tilde{S}^i_t$
   has no degenerate
   cylindrical singularities in the time interval $[T_i, T_{i+1}]$. We
   then define $S^{i+1}_t$ as follows:
   \begin{itemize}
     \item For $t < \tilde{T}_i$, $S^{i+1}_t$ agrees with
       $S^i_t$.
   \item For $t \geq \tilde{T}_i$ we define $S^{i+1}_t =
     \tilde{S}^i_t$.
   \end{itemize}
   By choosing the perturbation $\tilde{S}^i_{\tilde{T}_i}$
   sufficiently small, this flow $S^{i+1}_t$ will satisfy the
   conditions (1), (2), (3) above.

   It remains to verify that we can construct the required perturbations
   $\tilde{S}^i_{\tilde{T}_i}$ of $S^i_{\tilde{T}_i}$. Suppose
   that $S^i_t$ has a degenerate singularity $(x', t')$, with $t' \in
   [T_i, T_i + (r_1/10)^2]$. For simplicity let us assume for now that all other
   degenerate singularities in the time interval $[T_i, T_i +
   (r_1/10)^2]$ are contained in the neighborhood $Q_{\epsilon_0
     r_1/10}(x', t')$. 

   Let us define $(r')^2 = t' - \tilde{T}_i$.  Note that we have
   \[  (r_1/8)^2 < \frac{r_1^2}{60} \leq (r')^2 = t' - \tilde{T}_i \leq
     \frac{r_1^2}{100} + \frac{r_1^2}{40} \leq (r_1/4)^2. \]
   This means that $(x', t')$ is
   $\delta_0$-cylindrical at scale $r'$. Using this,
   Proposition~\ref{prop:localperturb2} implies that we can find
   arbitrarily small perturbations $\tilde{S}^i_{\tilde{T}_i}$of
   $S^i_{\tilde{T}_i}$, such that the corresponding flow
   $\tilde{S}^i_t$ has no degenerate cylindrical singularities in
   $Q_{\epsilon_0 r'}(x', t')$. By the lower bound above, this
   neighborhood contains $Q_{\epsilon_0 r_1/8}(x', t')$, which by our
   assumption contained all the degenerate singularities of $S^i_t$ on
   $[T_i, T_i + (r_1/10)^2]$. We can then define $S^{i+1}_t$ as
   described above.

   If $S^i_t$ has additional degenerate singularities on $[T_i, T_i +
   (r_1/10)^2]$, then we can argue with a covering, and successive
   perturbations as in the proof of Theorem~\ref{thm:main}.
 \end{proof}
 
\bibliography{mybib}{}
\bibliographystyle{plain}

\end{document}